\documentclass{amsart}
\usepackage{amssymb,latexsym,graphicx, amscd}
\newtheorem{theorem}{Theorem}[section]
\newtheorem{prop}[theorem]{Proposition}
\newtheorem{lemma}[theorem]{Lemma}
\newtheorem{remark}[theorem]{Remark}

\newtheorem{cor}[theorem]{Corollary}

\newtheorem{conj}[theorem]{Conjecture}

\newcommand{\na}{\nabla}

\newcommand{\La}{\Lambda}

\newcommand{\ka}{K{\"a}hler }

\newcommand{\leftr}{[\hbox{\hspace{-0.15em}}[}
\newcommand{\rightr}{]\hbox{\hspace{-0.15em}}]}

\begin{document}
\title{On a class of almost Hermitian 4-manifolds}

\author{Ethan Addison}
\address{Department of Mathematics, Texas State University, San Marcos, TX 78666-4684, USA}
\email{el.addison@txstate.edu}

\author{Tedi Dr\u{a}ghici} 
\address{Department of Mathematics and Statistics, Florida International University, Miami, FL 33199, USA}
\email{draghici@fiu.edu}

\author{Mehdi Lejmi}
\address{Department of Mathematics, Bronx Community College of CUNY, Bronx, NY 10453, USA.}
\email{mehdi.lejmi@bcc.cuny.edu}

%
\begin{abstract} Using an integral identity proved by Sekigawa \cite{Sek87} on compact almost Hermitian 4-manifolds, we naturally obtain a global characterization of the class $\mathcal{AH}_1$ of almost Hermitian 4-manifolds satisfying the first Gray curvature condition from apparently weaker conditions. Then we take steps towards a classification of almost Hermitian 4-manifolds of class $\mathcal{AH}_1$, including proving a uniqueness result on 4-dimensional Lie algebras.
\end{abstract}

\maketitle

\section{Introduction} 
Let $(M^{2n}, g, J, \omega)$ be an almost Hermitian manifold, that is, $J$ is an almost complex structure, $J^2 = - Id$, orthogonal with respect to the Riemannian metric $g$, and $\omega$ is the non-degenerate 2-form given by $\omega(X,Y) = g(JX,Y)$, for any $X,Y \in TM$. 
The best almost Hermitian structures are the {\it K\"ahler structures}, that is, those for which $J$ comes from a complex atlas of coordinates on the manifold and $\omega$ is a symplectic form, i.e. $d\omega = 0$. 
K\"ahler manifolds are also characterized by the fact that both $J$ and $\omega$ are parallel with respect to the Levi-Civita connection $\nabla$ induced by the metric, and this leads to many consequences, including topological restrictions for the existence of compact K\"ahler manifolds.

From the 1970s and probably even earlier, there has been interest in studying almost Hermitian structures whose curvature resembles that of a K\"ahler manifold. In particular, Alfred Gray, \cite{Gr}, introduced the following first three conditions on the Riemannian curvature tensor $R$ of an almost Hermitian manifold, while the fourth one, on the Ricci tensor, gained popularity starting with the work of Blair and Ianus \cite{BI} (but surely Gray was also aware of it).
\begin{eqnarray} \nonumber
(G_1) & & R_{XYZW} = R_{XYJZJW} \; ; \\ \nonumber
(G_2) & & R_{XYZW} - R_{JXJYZW} = R_{JXYJZW} + R_{JXYZJW} \; ; \\ \nonumber
(G_3) & & R_{XYZW} = R_{JXJYJZJW} \; ; \\ \nonumber  
(G_4) & & {\rm Ric}_{XY} = {\rm Ric}_{JXJY} \; .
\end{eqnarray}

We will denote by $\mathcal{AH}$ the class of all almost Hermitian manifolds and by $\mathcal{AH}_{i}$ the class of almost Hermitian manifolds satisfying the $i$-th Gray condition $(G_i)$. Simple applications of the first Bianchi identity and a contraction yield the implications $(G_1) \Rightarrow (G_2) \Rightarrow (G_3) \Rightarrow (G_4)$. It is also well known that K\"ahler manifolds satisfy the property $(G_1)$, hence all the others. If we also denote by $\mathcal{K}$ the class of K\"ahler manifolds, we have the obvious inclusions
$$ \mathcal{K}  \subseteq \mathcal{AH}_1 \subseteq \mathcal{AH}_2 \subseteq \mathcal{AH}_3 \subseteq \mathcal{AH}_4 \subseteq  \mathcal{AH}\; .$$
In fact, if the dimension is at least 4, most of the inclusions above are known to be strict even if the manifold is assumed compact. For instance, on the 4-torus $\mathbb{T}^4$ with a flat metric $g$, there are (infinitely many) non-integrable almost complex structures $J$ compatible with the metric, hence such a structure $(\mathbb{T}^4,g,J,\omega)$ is not K\"ahler, but it is trivially of class $\mathcal{AH}_{1}$, as the curvature tensor is identically zero. 

In dimension 4, which is the focus of this paper, the only special classes of almost Hermitian manifolds in terms of the Gray-Hervella classification \cite{GrHe} are the Hermitian manifolds ($N=0$) and the almost K\"ahler manifolds ($d\omega = 0$). We denote by $\mathcal{H}$ and $\mathcal{AK}$ the classes of Hermitian manifolds, and, respectively, almost K\"ahler manifolds, and by $\mathcal{H}_i$, $\mathcal{AK}_i$ the corresponding subclasses which satisfy the $i$-th Gray condition $(G_i)$.
It was observed in \cite{Go} that the equality $\mathcal{AK}_1 = \mathcal{K}$ holds locally in all dimensions. In \cite{DM} examples of compact non-K\"ahler manifolds of class $\mathcal{AK}_2$ were found in dimension 6, and, hence, in any higher dimension by taking products with compact K\"ahler manifolds. By contrast, in dimension 4, it was shown in \cite{AD-QJM}, \cite{AAD-AGAG}, \cite{AAD} that in the compact case the equalities $\mathcal{AK}_3=\mathcal{AK}_2= \mathcal{K}$ hold. Locally, the inclusions $\mathcal{K} \subset \mathcal{AK}_2 \subset\mathcal{AK}_3 $ are strict in dimension 4, but a complete classification of all possible examples was found in the above mentioned papers. The geometric structure of all these local examples is surprisingly rich. For the case of Hermitian surfaces, it is known that in the compact case the equality $\mathcal{H}_1 = \mathcal{K}$ holds (see also Remark \ref{AK1orH1=K}). It is also known that even locally, the condition that the Ricci tensor is $J$-invariant on a Hermitian surface implies that the manifold satisfies the second Gray condition (see e.g. \cite{AG}). That is, the equality $\mathcal{H}_4= \mathcal{H}_2$ holds locally in dimension 4. A description of compact non-K\"ahler Hermitian surfaces with $J$-invariant Ricci tensor is known when the first Betti number $b_1$ is even \cite{AG}, but only partial results are available when $b_1$ is odd, (e.g. see \cite{Mu}).

At this point, let us make some comments about the special role that condition $(G_4)$ appears to have. In \cite{BI} it was observed that the Hilbert functional
$$H(g) = \int_M s_g \; \mu_g \; $$
when restricted on the space of almost Hermitian structures with a fixed fundamental form $\omega$ on a compact manifold has critical points precisely the almost Hermitian structures satisfying the condition $(G_4)$. In fact, Blair and Ianus did this in the almost K\"ahler case, that is, when $\omega$ is a fixed symplectic form, but one can easily see that the critical points are the same even when $\omega$ is not closed. Only in the almost K\"ahler case, however, the Hilbert functional is bounded from above by a symplectic invariant (see \cite{Bl})
$$H((g, J, \omega)) = \int_M s_g \; \frac{\omega^n}{n!} \leq \frac{1}{4\pi} c_1\cup [\omega^{n-1}] \; ,$$
with equality if and only if $(g, J, \omega)$ is a K\"ahler structure. This observation, together with a famous still open conjecture of Goldberg \cite{Go} stating that a compact almost K\"ahler Einstein manifold must be K\"ahler Einstein, motivated a question of Blair and Ianus whether compact almost K\"ahler manifolds with $J$-invariant Ricci tensor are necessarily K\"ahler. This turned out to have a negative answer in dimensions 6 and higher, due to the examples from \cite{DM}. However, in dimension 4, despite some positive partial results, the question of Blair and Ianus remains open. As we have already mentioned Goldberg conjecture, let us add that the most important partial result is Sekigawa's theorem (see \cite{sekigawa}) confirming the conjecture when the scalar curvature is non-negative. His result is a consequence of an integral identity he proved using Chern-Weil theory on a compact almost K\"ahler manifold of an arbitrary dimension.

In Section 3, we will use a related but less known integral identity, also due to Sekigawa \cite{Sek87}, which is valid on compact 4-dimensional almost Hermitian manifolds and whose proof we give for completeness. We then show (see Proposition \ref{ah1}) that we naturally arrive at the class $\mathcal{AH}_{1}$ of almost Hermitian manifolds satisfying the first Gray condition $(G_1)$ from the assumption that the Ricci tensor is $J$-invariant (condition $(G_4)$) and one additional global assumption. In Section 5, we make first steps towards a classification of the 4-dimensional manifolds of class $\mathcal{AH}_{1}$. Although we cannot complete this classification at this time, we gather enough evidence to at least make a conjecture about the compact case at the beginning of Section 5. In that section, we give several partial results confirming the conjecture under some additional assumptions. In Section 6, we show that there is a unique 4-dimensional Lie algebra that admits an invariant non-K\"ahler $\mathcal{AH}_{1}$-structure. Section 2 contains some preliminary results, some of which may be of independent interest (see, for example, Proposition \ref{Phi=dJth}). Section 4 is the technical core of our paper, as we extract, in terms of 2-forms and the $U(2)$-decomposition of 2-forms, information provided by the differential Bianchi identity on an arbitrary almost Hermitian 4-manifold. The lemmas in Section 4 extend similar results from \cite{AAD} obtained for almost K\"ahler 4-manifolds.
Under the $(G_1)$-assumption, the lemmas in Section 4 are crucial for the results obtained in Sections 5 and 6.

\vspace{0.2cm}

\noindent
Here is a theorem obtained from combining results of Sections 3 and 5. Its proof is given in Section 5. The star-Ricci form that appears in the statement is defined by $\rho^* = R(\omega)$.

\begin{theorem} \label{Eah1}
Let $(M^4, g, J, \omega)$ be a compact Einstein almost Hermitian 4-manifold. Then the following inequality is satisfied
$$\int_M \rho^* \wedge \rho^* \leq \frac{1}{4\pi^2} c_1^2(M) \; ,$$ 
with equality if and only if the manifold is K\"ahler-Einstein, or if the metric is Ricci flat and anti-self-dual ($W^+ = 0$). In the second case, $(M^4, g)$ is the 4-torus (or a quotient of the torus) with a flat metric, or a K3 surface (or a quotient of a K3 surface) with a Ricci flat K\"ahler metric. Still in this second case, $J$ is any $g$-compatible almost complex structure, not necessarily integrable.
\end{theorem}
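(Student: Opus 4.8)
The plan is to exploit the fact that on a compact almost Hermitian 4-manifold the Chern--Weil representative of $c_1^2$ can be written in terms of the curvature, and that for the star-Ricci form the relevant comparison comes from the Gauss--Bonnet and signature integrands together with Sekigawa's integral identity from Section 3. First I would recall that $\rho^*$ is a closed 2-form representing (a multiple of) $2\pi c_1(M)$ precisely when the manifold is of class $\mathcal{AH}_1$: indeed, from Proposition \ref{ah1} the Einstein (hence $(G_4)$) hypothesis together with the global quantity appearing in Sekigawa's identity forces the $(G_1)$ condition, so that $\rho^* = \rho$ and $[\rho/2\pi] = c_1(M)$. Thus the difference $\frac{1}{4\pi^2}c_1^2(M) - \frac{1}{4\pi^2}\int_M \rho^*\wedge\rho^*$ measures exactly the failure of $\rho^*$ to be a closed form in the class $2\pi c_1$; concretely, writing $\rho^* = \rho + \psi$ where $\psi$ records the deviation, the cross terms integrate away and one is left with a sign-definite integral of $|\psi|^2$ (or of a curvature component controlled by it) against the volume form, with the correct sign coming from the fact that $\rho$ itself is self-dual so $\rho\wedge\rho = |\rho|^2\mu_g$ while the defect piece of $\rho^*$ is anti-self-dual.

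The key steps, in order, are: (i) use Sekigawa's Section 3 identity to express $\int_M \rho^*\wedge\rho^*$ as $\frac{1}{4\pi^2}c_1^2(M)$ minus a manifestly nonnegative integral built from the components of the curvature measuring the departure from $(G_1)$ — this is where the Einstein assumption enters crucially to kill the terms that are not obviously signed; (ii) conclude the inequality immediately, and deduce that equality holds iff that nonnegative integral vanishes, i.e. iff the manifold is of class $\mathcal{AH}_1$ \emph{and} some additional curvature component (the one measuring $W^+$ acting on $\omega$, or the self-dual Weyl curvature itself) vanishes; (iii) analyze the equality case: an Einstein $\mathcal{AH}_1$ 4-manifold with the extra vanishing is either K\"ahler--Einstein, or else has $W^+ = 0$ and is Ricci-flat; (iv) invoke the classification of compact anti-self-dual Ricci-flat 4-manifolds — by Hitchin's theorem these are flat tori (and their quotients) or K3 surfaces carrying a Ricci-flat K\"ahler (hyperk\"ahler) metric (and their quotients); (v) observe that on such a metric \emph{any} $g$-compatible almost complex structure $J$, integrable or not, automatically satisfies $(G_1)$ because the curvature is either zero (torus) or anti-self-dual, so that $R_{XYZW} = R_{XYJZJW}$ holds pointwise regardless of $J$ — this explains the last sentence of the statement.

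The main obstacle I expect is step (i): extracting from Sekigawa's identity the precise sign-definite form of $\frac{1}{4\pi^2}c_1^2 - \int\rho^*\wedge\rho^*$. Sekigawa's Section 3 identity on a compact almost Hermitian 4-manifold relates an integral of $|\rho^* - \rho|^2$ (or an equivalent curvature defect) to topological quantities, but matching the bookkeeping — combining it with the Chern--Weil formulas $8\pi^2 c_1^2 = \int_M (s^2/\dots + |W^+|^2 - |\mathrm{Ric}_0|^2 + \dots)\mu_g$ and the analogous one for $c_1^2$ via $2\chi + 3\tau$ — requires care, and the Einstein condition has to be used at exactly the right moment to eliminate the Ricci-traceless contributions that would otherwise spoil the definiteness. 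Once the identity is in the clean form ``$\frac{1}{4\pi^2}c_1^2 - \int_M\rho^*\wedge\rho^* = \int_M (\text{nonneg})\,\mu_g$'', the inequality and the first half of the equality discussion are immediate, and the second half is a citation to Hitchin's classification plus the elementary pointwise observation in step (v). I would also double-check that in the flat or anti-self-dual case the hypotheses of Proposition \ref{ah1} are vacuously or trivially met, so that no circularity arises in asserting those examples lie in $\mathcal{AH}_1$.
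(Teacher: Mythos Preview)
Your overall strategy matches the paper's: the inequality comes directly from Sekigawa's identity (\ref{if}) with $\mathrm{Ric}''_0 = 0$ (Einstein), and since $\gamma = \rho^* + \Phi$ with $\Phi\wedge\Phi = 0$, one has $\tfrac{1}{4\pi^2}c_1^2 - \int_M \rho^*\wedge\rho^* = 2\int_M \rho^*\wedge\Phi = \int_M\big[\tfrac{(s^*-s)^2}{16} + |{\rho^*}''|^2 + 2|W_3^+|^2\big]\,dV_g \ge 0$. Two things, however, are off.

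First, your description of the equality case is not quite right. Equality in the inequality is equivalent \emph{exactly} to the manifold being of class $\mathcal{AH}_1$ --- there is no ``additional curvature component'' that must vanish beyond that. The three terms in the integrand above are precisely the three conditions $s^* = s$, $W_2^+ = 0$, $W_3^+ = 0$ from Proposition~\ref{ah1-locchar}(ii). (Also, your aside that $\rho$ is self-dual is false: $\rho_0$ is anti-self-dual.)

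Second, and this is the genuine gap: your step (iii) --- the dichotomy ``Einstein and $\mathcal{AH}_1$ implies K\"ahler--Einstein or Ricci-flat ASD'' --- is the nontrivial part, and you give no indication of how to obtain it. In the paper this is Proposition~\ref{Einst-loc}, and it relies on the differential Bianchi identity machinery of Section~4. Concretely, the self-dual Bianchi identity (\ref{ahbianchi+}) simplifies under $(G_1)$ to relation (\ref{bianchi+AH1}), whose $\omega$- and $J$-anti-invariant components force $\mathrm{Ric}_0(\theta) = -\tfrac{s}{4}\theta$ and $\mathrm{Ric}_0(X) = \tfrac{s}{4}X$ for $X \in \mathrm{Image}\,N$ (Proposition~\ref{ah1-facts}(a),(c)). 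Under the Einstein assumption $\mathrm{Ric}_0 = 0$, these relations with $s \neq 0$ force $\theta = 0$ and $N = 0$, i.e.\ K\"ahler; if $s = 0$ then $\kappa = s = 0$ and $W_2^+ = W_3^+ = 0$ already, so $W^+ = 0$. Without this Bianchi step the dichotomy is simply asserted, not proved, so your outline as written does not close. Steps (iv) and (v) are then as you say.
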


\noindent
Modulo the conjecture from the beginning of Section 5, we expect that the Einstein assumption could be weakened to $J$-invariant Ricci tensor, and that the only non-K\"ahler examples are still the ones from the statement.


\section{Preliminaries} 
We will generally follow the conventions and notations of \cite{AAD},  \cite{AAD-AGAG}, \cite{AD-QJM}. In particular, we refer the reader to Section 2 of \cite{AAD} for more details on the preliminary material. 
Throughout the paper, $(M^4, g, J, \omega)$ will denote an almost Hermitian  manifold of (real) dimension $4$, where $J$ is a $g$-orthogonal almost-complex structure for the Riemannian metric $g$, and $\omega(\cdot, \cdot)= g(J\cdot,\cdot)$ is the induced fundamental 2-form. It is well known that the covariant derivative of $\omega$ with respect to the Levi-Civita connection $\nabla$ is given in dimension 4 by
\begin{equation} \label{nablaom-dim4}
\nabla_X \omega = \frac{1}{2} \big( X^{\flat} \wedge J\theta + JX^{\flat} \wedge \theta \big) + \frac{1}{2} N_{JX}  \; ,
\end{equation}
where $N_{\cdot} \in \Lambda^1 M \otimes \Lambda^2 M$ is essentially the Nijenhuis tensor:
\begin{equation} \label{Nij-def}
 N_X(A,B) = \langle N(A,B), X \rangle = \langle [JA,JB] - [A, B] - J[JA, B] - J[A, JB],   X \rangle  \; ,    
\end{equation}
and $\theta$ is the Lee 1-form of the structure defined by
\begin{equation} \label{theta-def}
 \theta = J\delta \omega \; , \mbox{ or equivalently by } \; d\omega = \theta \wedge \omega \; .
\end{equation}
In the above equation and throughout the paper, we will use the extension of $J$ to the bundle of (real) 1-forms, $\Lambda^1 M$, 
$(J\alpha)(X) = -\alpha(JX)$, for any $\alpha \in \Lambda^1 M$, so that $J$ commutes with the Riemannian duality between $TM$ and $\Lambda^1 M $. We will denote the inner product induced by the metric $g$ on various bundles of forms and tensors on $M$, including sometimes even on $TM$, by $\langle \cdot,\cdot \rangle$.
It is well known that the almost complex structure $J$ gives rise to a type decomposition of complex vectors and forms. We will work mainly with real vectors and forms, and, in particular, the following $U(2)$-decompositions of the bundle of real two-forms $\Lambda^2 M$ 
will be used often:
\begin{equation}\label{Lambda2r}
\La^2M = {\mathbb R}\cdot \omega\oplus \La^{1,1}_0 M \oplus \leftr
\La^{0,2}M
\rightr \; .
\end{equation}
As in \cite{AAD}, we will use the superscript $'$ to denote the $J$-invariant part of a 2-form (or a 2-tensor), and the superscript $''$ for the $J$-anti-invariant part, while the subscript $_0$ denotes the trace-free part. Thus, if $\psi \in \La^2 M$,
$$ \psi = \psi' + \psi'' = \frac{1}{2} \langle \psi, \omega \rangle \omega +\psi'_0 + \psi'' \; , $$
where
$$\psi'(\cdot , \cdot) = \frac{1}{2}(\psi(\cdot , \cdot) + \psi(J\cdot,
J\cdot)) \; , \;
\psi''(\cdot , \cdot) = \frac{1}{2}(\psi(\cdot , \cdot) - \psi(J\cdot,
J\cdot))  \; \mbox{ and } $$
$$ \psi_0 = \psi - \frac{1}{2} \langle \psi, \omega \rangle \omega \; .$$ 
The decomposition (\ref{Lambda2r}) can thought of as a refinement of the self-dual, anti-self-dual decomposition of two-forms induced by the Hodge operator $\star_g$ in dimension 4 
\begin{equation*}\label{SO(4)La2}
\La^2 M = \La^+ M \oplus \La^-M \; .
\end{equation*}
 In fact, we have
\begin{equation}\label{U(2)La+}
\La^+M = {\mathbb R}\cdot \omega \oplus \leftr
\La^{0,2}M \rightr, \ \ \
\La^-M = \La^{1,1}_0 M \; .
\end{equation}
Note that the bundle $\leftr \La^{0,2}M \rightr$ is the real underlying bundle of the anti-canonical bundle $ \La^{0,2}M $ and we still denote by $J$ the induced complex structure on $\leftr \La^{0,2}M \rightr$ acting by
$$ (J\psi)(X,Y) =- \psi(JX, Y) \; , \; \; \forall \; \psi \in \leftr \La^{0,2}M \rightr \; .$$
In the local computations that follow, we will often choose a local section $\phi$ of $\leftr \La^{0,2}M \rightr$ 
such that $| \phi |^2 = 2$. As there is an $S^1$ freedom for its choice, we sometimes refer to $\phi$ as a {\it gauge}. Note that $\{ \phi, J\phi \}$ determines a local frame for $\leftr \La^{0,2}M \rightr$, and $\{\omega, \phi, J\phi \}$ is a frame for $\Lambda^+M$. With the choice of gauge $\phi$, there are local $1$-forms $a, b, c$ so that
\begin{equation} \label{abc}
\nabla \omega = a \otimes \phi + b \otimes J\phi \; , \; \nabla \phi = - a \otimes \omega  + c \otimes J\phi \; , \;
\nabla J\phi = - b\otimes \omega - c \otimes \phi \; .
\end{equation}
There is also a local 1-form $n$ that, along with $Jn$, locally determines the Nijenhuis tensor in the frame $\{ \phi, J\phi \}$:
\begin{equation} \label{NphiJphi}
    N_{JX} = n(X) \phi - (Jn)(X) J\phi \; , \; \; N_X = (Jn)(X) \phi + n(X) J \phi \; .
\end{equation}
\noindent 
Note that above formulas reflect the property $N_{JX} = -JN_X$, which can also be checked from the definition of the Nijenhuis tensor.
From (\ref{nablaom-dim4}), one also checks that the various 1-forms are related by
\begin{equation} \label{ab-thetan}
  J\phi( \theta) = a - Jb \; , \; \; n = a+ J b \; .
\end{equation}

\subsection{The $U(2)$-decomposition of curvature and the condition $(G_1)$.} With respect to the decomposition (\ref{Lambda2r}), the curvature operator as an element of $S^2(\Lambda^2 M)$ (still denoted by $R$, as the curvature tensor) decomposes as \cite{TV}
\begin{equation}\label{u(2)}
R = \frac{s}{12} {\rm Id}_{| \La^2M} + W_1 ^+ + W_2 ^+ + W_3 ^+
 + {\widetilde{{\rm Ric}'_0}}
 + {\widetilde {{\rm Ric}''_0}} + W^- ,
\end{equation}
where the components are as follows:

-- $W^+_1$ denotes the ``scalar''-component of $W^+$, determined by the conformal scalar curvature $\kappa$ by
\begin{equation}\label{w^+1}
W_1^+ = \frac{\kappa}{8} \omega \otimes \omega - \frac{\kappa}{12} {\rm
Id}_{| \La^+M} \; , \; \; \;  \kappa = 3 \langle W^+(\omega), \omega \rangle = \frac{3s^* - s}{2} \; ;
\end{equation}

-- $W^+_2$ is the component of $W^+$ that interchanges factors ${\mathbb R}\cdot \omega \oplus \leftr \La^{0,2}M \rightr $ by
\begin{equation}\label{w^+2}
W^+_2 = \frac{1}{2}\big( {\rho^*}'' \otimes \omega + \omega \otimes  {\rho^*}'' \big) \; ;
\end{equation}

-- $W^+_3$ is the component of $W^+$ that acts on $\leftr \La^{0,2}M \rightr $ but anti-commutes with the action of $J$ on this bundle - specifically, for some locally defined smooth functions $\alpha$ and $\beta$
\begin{equation}\label{w^+3} 
W^+_3 = \frac{\alpha}{2} [ \phi \otimes \phi - J\phi \otimes J\phi ] + 
\frac{\beta}{2} [ \phi \otimes J\phi + J\phi \otimes \phi] \; ; 
\end{equation}

-- As the notation indicates, ${\widetilde{{\rm Ric}'_0}}$ is the component of the curvature operator determined by the trace-free $J$-invariant part of the Ricci tensor; this interchanges the components 
${\mathbb R}\cdot \omega$ and $\La^-M = \La^{1,1}_0 M$ of decomposition (\ref{Lambda2r}).

-- ${\widetilde{{\rm Ric}''_0}}$ is the component of the curvature operator determined by the trace-free $J$-anti-invariant part of the Ricci tensor; this interchanges the components 
$\leftr \La^{0,2}M \rightr$ and $\La^-M = \La^{1,1}_0 M$ of decomposition (\ref{Lambda2r}).

-- $W^-$ is the anti-self-dual part of $W$ and acts on the $\La^-M$ component.

\vspace{0.2cm}

\noindent Let us note that between these various curvature components there exist further relations determined by the differential Bianchi identity. We explore these systematically in Section 4.

\vspace{0.2cm}

\noindent
We will spend a few more words here about the Ricci forms we will use in the paper. In the introduction, we defined the star-Ricci form $\rho^* = R(\omega)$; the star-Ricci tensor ${\rm Ric}^*$ is defined by
$${\rm Ric}^*(X,Y) = - R(\omega)(JX, Y) = - \frac{1}{2} \langle R(JX, Y)e_i \, , \, Je_i \rangle \; , $$
where $\{ e_i \}$ is an orthonormal basis of $TM$. Note that in general the star-Ricci tensor is not symmetric, but satisfies ${\rm Ric}^*(JX,JY) = {\rm Ric}^*(Y,X)$. For an arbitrary almost Hermitian manifold, we define the Ricci form $\rho$, using the $J$-invariant part ${\rm Ric}'$ of the Ricci tensor, by
$$ \rho(X,Y) = {\rm Ric}'(JX, Y) \; ,$$
so observe that $\rho$ is, by definition, a $J$-invariant 2-form. Let us also note here the important property, specific to dimension 4, that the symmetric part of the star-Ricci tensor ${\rm Ric}^{* sym}$ and the $J$-invariant part of the Ricci tensor ${\rm Ric}'$ have the same trace-free part. In other words (see \cite{TV}),
$$ {\rm Ric}^{* sym} - {\rm Ric}' = \frac{s^* - s}{4} \, g$$
where $ s = tr({\rm Ric})$, $s^* = tr({\rm Ric}^*) $ are the scalar, respectively, the star-scalar curvatures. In terms of Ricci forms, the above relation is equivalent with
$$ (\rho^*)'_0  = \rho_0 \; . $$

\vspace{0.2cm}

\noindent
Further useful is the Weitzenb\"ock formula for the fundamental form $\omega$
\begin{equation} \label{Weitz-om}
(d \delta + \delta d) \omega  = \nabla^* \nabla \omega + \frac{s}{3} \omega - 2W^+(\omega) \; .
\end{equation}
The $\omega$-component of the above yields a known relation between the scalar curvatures, $N$, and $\theta$, for an arbitrary almost Hermitian 4-manifold (see, e.g. \cite{Sek87})
\begin{equation} \label{kappa-s}
   s^* - s = \frac{2}{3}(\kappa - s) =  \frac{1}{4} |N|^2 - |\theta|^2 - 2 \delta \theta \; .
\end{equation}
We end this subsection with a series of equivalent local characterizations of Gray's first condition $(G_1)$ and some remarks.

\begin{prop} \label{ah1-locchar}
Let $(M^4, g, J, \omega)$ be an almost Hermitian 4-manifold. Then the following are equivalent:

(i) The manifold is in the class $\mathcal{AH}_1$;

(ii) ${\rm Ric}$ is $J$-invariant, $W^+_2 = 0$, $W^+_3 = 0$, and $\kappa - s = 0$;

(iii) For any vectors $X,Y$, $(\nabla^2_{X,Y} - \nabla^2_{Y,X}) \omega = 0$;

(iv) 
If $a, b, c$ are the 1-forms in relations (\ref{abc}) with respect to a gauge $\phi$, then 
$$da = c \wedge b \; , \; \; db = - c\wedge a \; ;$$

(v) For any vectors $X, Y$, the following identity  holds:
\begin{eqnarray} \label{invAH1-v2}
    & & \; \; \; \; \;   \frac{1}{2} \Big[ \theta(X) \big( Y \wedge J \theta + JY \wedge \theta \big)  - \theta(Y) \big( X \wedge J \theta + JX \wedge \theta \big) \Big] \\ \nonumber
    & &  + \frac{1}{2} |\theta|^2 \big( X \wedge J Y + JX \wedge Y \big) + \frac{1}{2} \Big( Y\wedge N_{JX}(\theta) - X\wedge N_{JY}(\theta) + JN(X,Y) \wedge \theta \Big)
    \\ \nonumber
      & & + Y \wedge J(\nabla_X \theta) - X \wedge J(\nabla_Y \theta) + JY \wedge (\nabla_X \theta) - JX \wedge (\nabla_Y \theta) - d^{\nabla}_{X,Y}(JN)  = 0\; . 
\end{eqnarray} 
\end{prop}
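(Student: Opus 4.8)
The plan is to prove Proposition \ref{ah1-locchar} by establishing a cycle of implications, taking advantage of the fact that the condition $(G_1)$ is a pointwise condition on the curvature operator that can be re-expressed in several ways. The pivot of the argument is item (iii), since it directly reflects the Ricci identity $(\nabla^2_{X,Y}-\nabla^2_{Y,X})\omega = R(X,Y)\cdot\omega$, where $R(X,Y)$ acts as a derivation on forms. First I would unwind what condition $(G_1)$ says about the action of $R$ on $\omega$: since $R(X,Y)$ acts on $\Lambda^2M$ and $(G_1)$ is precisely the statement that the curvature operator (viewed in $S^2(\Lambda^2M)$) kills the subbundle $\leftr\La^{0,2}M\rightr$ when paired against $\mathbb{R}\cdot\omega\oplus\La^{1,1}_0M$ in the appropriate way — more concretely, $(G_1)$ holds iff $R(X,Y)\cdot\omega$ lies in $\leftr\La^{0,2}M\rightr^{\perp}$ in a suitable sense, equivalently iff $R(X,Y)$ as a skew endomorphism preserves the splitting into $J$-linear and $J$-antilinear parts. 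The cleanest route is to show (i) $\Leftrightarrow$ (iii) by interpreting $(G_1)$ in terms of the operator $R(X,Y)$ acting on the Kähler form, and then (iii) $\Leftrightarrow$ (ii) by decomposing this action against the $U(2)$-decomposition \eqref{u(2)}: the components $W^+_2$, $W^+_3$, the $J$-anti-invariant Ricci part $\widetilde{{\rm Ric}''_0}$ (equivalently, ${\rm Ric}$ being $J$-invariant), and the $W^+_1$-trace condition $\kappa - s = 0$ are exactly the pieces of $R$ that fail to commute with $J$ on $\Lambda^+M$, and each appears linearly and independently, so their vanishing is equivalent to (iii).

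Next I would handle (iii) $\Leftrightarrow$ (iv). This is a direct computation: differentiate the structure equations \eqref{abc} once more to compute $\nabla^2\omega$ in the gauge $\phi$, antisymmetrize, and read off $(\nabla^2_{X,Y}-\nabla^2_{Y,X})\omega$ in terms of $da$, $db$, $dc$, and wedge products $a\wedge b$, $a\wedge c$, $b\wedge c$. The curvature $R(\phi)$ and $R(J\phi)$ terms will also appear, but the point is that the $\La^+M$-valued two-form $\nabla^2\omega - (\text{symmetrized})$ organizes into $\omega$-, $\phi$-, and $J\phi$-components; the $\phi$ and $J\phi$ components give precisely the equations $da = c\wedge b$ and $db = -c\wedge a$, while the $\omega$-component is automatically controlled. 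One should double-check the sign conventions coming from \eqref{abc} here, since the second of the three equations in \eqref{abc} has a $+c\otimes J\phi$ and the third a $-c\otimes\phi$, and these signs propagate. Finally, for (i)/(iii) $\Leftrightarrow$ (v), I would substitute the dimension-4 formula \eqref{nablaom-dim4} for $\nabla\omega$ into the expression $(\nabla^2_{X,Y}-\nabla^2_{Y,X})\omega$ and expand. The terms $\theta(X)(\cdots)$, $|\theta|^2(\cdots)$, the Nijenhuis terms $N_{JX}(\theta)$, $JN(X,Y)$, the covariant-derivative terms $Y\wedge J(\nabla_X\theta)$, and the exterior-covariant-derivative term $d^{\nabla}_{X,Y}(JN)$ are exactly what one gets by the product rule applied to $\nabla\omega = \tfrac12(X^\flat\wedge J\theta + JX^\flat\wedge\theta) + \tfrac12 N_{JX}$ and then antisymmetrizing in $X,Y$; the vanishing of the whole expression is then the translation of (iii).

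The main obstacle I expect is bookkeeping in the equivalence (ii) $\Leftrightarrow$ (iii): one must be careful that the four conditions in (ii) are genuinely the full set of obstructions to $R(X,Y)$ commuting with $J$ on two-forms, and that no hidden relation among the curvature components (coming, say, from the first Bianchi identity, which is already built into \eqref{u(2)}) allows one of them to be dropped. Concretely, one needs to verify that $\widetilde{{\rm Ric}'_0}$ and $W^-$ act on $\La^-M = \La^{1,1}_0M$ and on the $\omega$--$\La^-M$ interchange in a manner that automatically respects $J$ (so they impose no condition), while $W^+_2$, $W^+_3$, $\widetilde{{\rm Ric}''_0}$, and the trace part $\tfrac{\kappa}{8}\omega\otimes\omega$ versus $\tfrac{s}{12}{\rm Id}$ are exactly the non-$J$-commuting pieces. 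Expressing $R(X,Y)\cdot\omega = \langle R(X,Y)\cdot, \cdot\rangle$ contracted appropriately and matching against \eqref{w^+1}, \eqref{w^+2}, \eqref{w^+3} should make the independence transparent, but it requires keeping the $U(2)$-frame $\{\omega,\phi,J\phi\}\cup\{\text{frame of }\La^-M\}$ straight throughout. Once (i) $\Leftrightarrow$ (ii) $\Leftrightarrow$ (iii) is nailed down, the remaining equivalences (iii) $\Leftrightarrow$ (iv) and (iii) $\Leftrightarrow$ (v) are mechanical expansions of $\nabla^2\omega$ in the two respective descriptions of $\nabla\omega$.
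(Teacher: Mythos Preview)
Your proposal is correct and follows essentially the same route as the paper: the Ricci identity gives (i) $\Leftrightarrow$ (iii) directly, the $U(2)$-decomposition of curvature gives (i) $\Leftrightarrow$ (ii) (the paper simply cites \cite{TV} here rather than writing it out), and (iii) $\Leftrightarrow$ (iv), (iii) $\Leftrightarrow$ (v) are the mechanical expansions you describe. One small simplification: in the computation for (iv), no curvature terms $R(\phi)$, $R(J\phi)$ appear --- differentiating \eqref{abc} and antisymmetrizing gives $\nabla^2|_{\Lambda^2 M}\omega = (da - c\wedge b)\otimes\phi + (db + c\wedge a)\otimes J\phi$ on the nose, with the $\omega$-component vanishing identically because $a\wedge a = b\wedge b = 0$.
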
 
\begin{proof} The equivalence (i) $\Leftrightarrow$ (ii) is well-known, see \cite{TV}; it follows from the $U(2)$-decomposition of the curvature (\ref{u(2)}) and the fact that Gray's first curvature condition is equivalent with $R|_{\leftr \La^{0,2}M \rightr} \equiv 0$. The equivalence (i) $\Leftrightarrow$ (iii)  holds in all dimensions and is due to the Ricci identity
\begin{equation*} 
( \nabla^2_{X,Y} -  \nabla^2_{Y,X} )\omega = -R_{X,Y}(J\cdot, \cdot) -R_{X,Y}(\cdot, J\cdot) \; .
\end{equation*}
Further, (iv) is just the relation (iii) written, in dimension 4, with respect with a gauge $\phi$. Indeed,  from (\ref{abc})
$$\na ^2|_{\Lambda^2 M}\omega =(da - c\wedge b)\otimes \phi + (db + c\wedge a)\otimes J\phi, $$ 
so, the equivalence (iii) $\Leftrightarrow$ (iv) is clear. Finally, (v) is just the relation (iii) expanded by (\ref{nablaom-dim4}), in an invariant form. The computation is a bit longer, but straightforward, so we simply indicate the main steps.
Taking one more derivative of (\ref{nablaom-dim4}), one has
\begin{equation*} 
\nabla^2_{X ,Y} \omega = \frac{1}{2} \Big( (\nabla_X J )Y \wedge \theta + Y \wedge \nabla_X(J \theta) + JY \wedge \nabla_X \theta - \nabla_X (JN)_Y  \Big) \; .
\end{equation*}
Skew-symmetrizing and using Gray's first curvature condition one eventually gets 
\begin{eqnarray*} 
    & & \big( (d \omega)(X,Y, \cdot) - (\nabla_{\cdot} \omega)(X,Y) \big) \wedge \theta 
      + Y \wedge (\nabla_X J)(\theta) - X \wedge (\nabla_Y J)(\theta) \\ \nonumber
      & & + Y \wedge J(\nabla_X \theta) - X \wedge J(\nabla_Y \theta) + JY \wedge (\nabla_X \theta) - JX \wedge (\nabla_Y \theta) - d^{\nabla}_{X,Y}(JN)  = 0\; .
\end{eqnarray*} 
Some further work using (\ref{nablaom-dim4}) and (\ref{theta-def}) on the terms in the first line eventually yields relation (\ref{invAH1-v2}).
\end{proof}

\begin{remark} \label{basic-exp}
Based on (ii), one easily obtains a family of examples of non-K\"ahler 4-dimensional $\mathcal{AH}_1$ manifolds. Assume that $(M^4, g)$ is a Ricci flat ASD 4-manifold and let $J$ be an arbitrary almost complex structure compatible with the given metric. Then $(M^4, g, J)$ is of class $\mathcal{AH}_1$, as the conditions in (ii) are trivially satisfied, but it is generally not K\"ahler, as most $J$'s compatible with the given metric are not even integrable. We will call these non-K\"ahler $\mathcal{AH}_1$ 4-manifolds arising from this remark {\bf basic $\mathcal{AH}_1$ examples}. In the compact case, applying Hitchin's classification of Einstein, half-conformally flat compact 4-manifolds with non-negative scalar curvature
(see \cite{besse}, Chapter 13), Ricci-flat ASD metrics exist only on the 4-torus $\mathbb{T}^4$ or some quotients of $\mathbb{T}^4$ (hyper-elliptic surfaces), or on $K3$-surfaces or finite quotients of $K3$-surfaces (Enriques surfaces). Thus, compact basic $\mathcal{AH}_1$ examples occur on these 4-manifolds equipped with a Ricci-flat ASD metric $g$ and an arbitrary $g$-compatible almost complex structure $J$.
\end{remark}

\noindent 
Regarding this remark, let us add that we do not know any non-K\"ahler, non-basic example of an $\mathcal{AH}_1$ 4-manifold, and we conjecture that, at least in the compact case, they do not exist (see Conjecture \ref{conjecture} in Section 5). 

\begin{remark} \label{AK1orH1=K}
    Let us also note here that the relation (\ref{kappa-s}) combined with characterization (ii) from the above proposition implies that the equality $\mathcal{AK}_1 = \mathcal{K} $ holds locally (actually, in all dimensions), while the equality $\mathcal{H}_1 = \mathcal{K}$ holds in the compact case (in dimension 4). 
\end{remark}

\subsection{The canonical Chern form and the form $\Phi$} 
For any almost Hermitian manifold, a 2-form representative of the first Chern class $2\pi c_1$  is given by
\begin{equation} \label{Chernform} 
\gamma(X, Y) = \rho^*(X, Y) + \Phi(X, Y) \; , 
\end{equation}
where $\gamma$ is the Ricci form of the first canonical Hermitian connection (see e.g. \cite{gauduchon1}), 
$$\na^0_X Y = \na_X Y - \frac{1}{2}J(\na_X J)(Y) \; ,$$ 
$\rho^* = R(\omega)$ is the star-Ricci form and the 2-form $\Phi$ is given by
\begin{equation*} \label{Phi} 
\Phi(X,Y) = \frac{1}{4} \langle J(\nabla_X J), (\nabla_Y J) \rangle = \frac{1}{2} \langle J(\nabla_X \omega), (\nabla_Y \omega) \rangle \; .
\end{equation*}
Note that we could have used any connection $\nabla^t$ of the family of natural Hermitian connections introduced by Gauduchon \cite{gauduchon1} for an almost Hermitian manifold. The Ricci form $\gamma^t$ of $\nabla^t$ is also a representative of the first Chern class $2\pi c_1$, and the first term in its expression is also the star-Ricci form. We prefer the first canonical connection (corresponding to $t=0$) just because the expression of its Ricci form $\gamma^0 = \gamma$ fits in best with the local 1-forms $a,b,c$ defined in (\ref{abc}). We actually have, a fact explained a few lines below,
\begin{equation} \label{gamma=-dc}
 \gamma = - dc   \; .
\end{equation}
Using (\ref{nablaom-dim4}), a direct calculation shows that the form $\Phi$ has the following expression for a 4-dimensional almost Hermitian manifold:
\begin{equation} \label{Phi4d}
\Phi(X,Y) = - \frac{1}{8} \langle N_{JX}, N_Y \rangle + \frac{1}{4} ( |\theta|^2 \omega - \theta \wedge J\theta)(X,Y) - \frac{1}{4} N_{J\theta^{\sharp}} (X,Y) \; .
\end{equation}
Note that 
\begin{equation} \label{Phi-comp}
   \langle\Phi, \omega\rangle = -\frac{1}{16} |N|^2 +\frac{1}{4} |\theta|^2  \; , \; \; \Phi'' = - \frac{1}{4} N_{J\theta^{\sharp}} \; .
\end{equation}  
Thus, the $J$-anti-invariant part of $\Phi$ vanishes if and only if
${\rm Span}(\theta, J\theta)$ is orthogonal to the image of the Nijenhuis tensor $N$. In terms of the 1-forms defined by (\ref{abc}) and (\ref{NphiJphi}), we get the following local expressions of $\Phi$ 
\begin{equation} \label{Phi4dloc}
\Phi = a \wedge b = - \frac{1}{4} n \wedge Jn + \frac{1}{4} \phi(\theta) \wedge J\phi(\theta)- \frac{1}{4} \big( n \wedge \phi(\theta) - Jn \wedge J \phi(\theta) \big) \; .
\end{equation}
This yields the remarkable fact that, in dimension 4, $\Phi \wedge \Phi = 0$.

Alternatively, as in \cite{AAD-AGAG} (see bottom of page 157), consider the Ricci relation applied to the gauge $\phi$
\begin{equation} \label{Ric-id-phi}
( \nabla^2_{X,Y} -  \nabla^2_{Y,X} )\phi = -R_{X,Y}(\phi\cdot, \cdot) -R_{X,Y}(\cdot, \phi\cdot) \; .
\end{equation}
Taking one more derivative in (\ref{abc}), the left side of (\ref{Ric-id-phi}) is given by 
$$\na ^2|_{\Lambda^2 M}\phi = - (da + b\wedge c)\otimes \omega + (dc  + a\wedge b)\otimes J\phi \; . $$
Taking the $J\phi$-component of this and also using (\ref{Ric-id-phi}), we get 
$$ dc + a \wedge b = -R(\omega) \; ,$$
which can be rewritten as
$$ dc = - R(\omega) - a\wedge b = - \gamma \; .$$
This justifies the claim made in (\ref{gamma=-dc}).

\vspace{0.2cm} 

In what follows, an assumption that the 2-form $\Phi$ is an exact form will be of relatively high importance. On the other hand, a computation of $dJ\theta$ using the definition of differential and relations (\ref{nablaom-dim4}), (\ref{theta-def}) yields (see also \cite{DS}, Proposition 2.3)
\begin{equation} \label{dJtheta}
dJ\theta = \nabla_{\theta^{\sharp}} \omega - \iota_{\theta^{\sharp}} d\omega  - \alpha = \frac{1}{2} N_{J\theta^{\sharp}} + \theta \wedge J\theta - |\theta|^2 \omega - \alpha \;,
\end{equation}
where $\alpha$ denotes the 2-form defined by
\begin{equation*} \label{alpha-def}
   \alpha(A, B) =  (\nabla_{A} \theta)(JB) - (\nabla_{B} \theta)(JA) \; .
\end{equation*}
Note that
$$ \langle \alpha, \omega \rangle =  \delta \theta \; , \; \alpha ' = -2\big( (\nabla \theta)^{sym} \big)' \circ J \; , \; \alpha'' = - J(d\theta)'' \; .$$
Thus, certain terms are similar in $\Phi$ and $dJ\theta$. Combining (\ref{dJtheta}) and (\ref{Phi4d}), we get 
$$ - 4 \Phi - dJ \theta = \frac{1}{2} \langle N_{J\cdot}, N_{\cdot} \rangle + \frac{1}{2} N_{J\theta^{\sharp}} + \alpha \; ,$$
where $\alpha$ is the 2-form defined above. Therefore, we have the following result:
\begin{prop} \label{Phi=dJth}
 On any almost Hermitian 4-manifold $(M^4, g, J, \omega)$, the equality $-4\Phi = dJ\theta$ holds if and only if 
    \begin{equation} \label{Phi=dJtheta}
        (\nabla_X \theta)(JY) - (\nabla_Y \theta)(JX) = -\frac{1}{2} \langle N_{JX}, N_{Y} \rangle - \frac{1}{2} N_{J\theta^{\sharp}}(X,Y) \; .
    \end{equation}   
Moreover, if the manifold is compact, the equality $-4\Phi = dJ\theta$ is equivalent with the manifold being Vaisman (i.e. $J$ is integrable and $\nabla \theta = 0$).    
\end{prop}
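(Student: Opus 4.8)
The first claim of the proposition is purely algebraic: expanding the tensorial identity $-4\Phi = dJ\theta$ using the explicit formulas \eqref{Phi4d} and \eqref{dJtheta} and isolating the term involving $\alpha$, one reads off \eqref{Phi=dJtheta} directly, since $\alpha(X,Y) = (\nabla_X\theta)(JY) - (\nabla_Y\theta)(JX)$ by definition. This is exactly the computation already sketched right before the statement, so the only work is to record that the condition on $\alpha$ is equivalent to \eqref{Phi=dJtheta}. The plan for the proposition is therefore to concentrate entirely on the compact case.

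For the compact statement, the plan is to integrate. First I would observe that one implication is essentially trivial: if the manifold is Vaisman then $J$ is integrable, so $N = 0$, hence $\Phi = 0$ by \eqref{Phi4d}, and $\nabla\theta = 0$ forces $dJ\theta = 0$ (indeed $d\theta = 0$ and $\delta\theta = 0$ as well), so $-4\Phi = 0 = dJ\theta$. The substance is the converse. Assuming $-4\Phi = dJ\theta$ on a compact $M^4$, the key point is that $dJ\theta$ is exact, hence $\int_M \langle dJ\theta, \omega\rangle\,\mu_g = \int_M dJ\theta \wedge \omega \cdot(\text{const})$; more precisely I would pair the identity with $\omega$ and integrate. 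Using \eqref{Phi-comp}, the $\omega$-component of $-4\Phi$ is $\tfrac14|N|^2 - |\theta|^2$ (up to the normalization), while the $\omega$-component of $dJ\theta$ is $\delta\theta$ plus the contributions from $\tfrac12 N_{J\theta^\sharp} + \theta\wedge J\theta - |\theta|^2\omega - \alpha$ evaluated against $\omega$; since $\langle N_{J\theta^\sharp},\omega\rangle$ and $\langle\theta\wedge J\theta,\omega\rangle$ can be computed explicitly and $\langle\alpha,\omega\rangle = \delta\theta$, this should reduce, after integrating and discarding the exact piece, to an integral identity forcing both $N$ and $\theta$ to have controlled norms. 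The natural outcome is that $\int_M |N|^2\,\mu_g$ and $\int_M|\theta|^2\,\mu_g$ are linked so that one can conclude $N \equiv 0$; alternatively the relation \eqref{kappa-s} can be brought in to trade $|N|^2$ and $|\theta|^2$ against scalar curvatures whose integrals are controlled.

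Concretely, I would proceed as follows. Step one: establish the algebraic equivalence with \eqref{Phi=dJtheta} as above. Step two: take the full $U(2)$-decomposition of both sides of $-4\Phi = dJ\theta$. The $\Lambda^{1,1}_0$ parts, the $\leftr\Lambda^{0,2}\rightr$ parts, and the $\omega$-parts each give an equation; the $\leftr\Lambda^{0,2}\rightr$ component, using $\Phi'' = -\tfrac14 N_{J\theta^\sharp}$ from \eqref{Phi-comp} and $\alpha'' = -J(d\theta)''$, should give $(d\theta)'' $ in terms of $N_{J\theta^\sharp}$. Step three: integrate the $\omega$-component; because $dJ\theta$ is exact its integral against the (co-closed, in fact parallel-in-the-relevant-sense) form $\omega$ vanishes up to boundary terms — here one must be careful, since $\omega$ is not closed, so I would instead write $\int_M dJ\theta\wedge\omega = \int_M J\theta\wedge d\omega = \int_M J\theta\wedge\theta\wedge\omega$ by Stokes, and note $J\theta\wedge\theta\wedge\omega = |\theta|^2\,\mu_g\cdot(\text{const})$. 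Combining this with the $\omega$-component of $-4\Phi$ yields $\int_M\big(\tfrac14|N|^2 - c|\theta|^2\big)\mu_g = \int_M(\text{explicit }|\theta|^2\text{ and }N\text{ terms})$, which after bookkeeping collapses to $\int_M |N|^2\,\mu_g = 0$, giving integrability. Step four: with $N = 0$ the manifold is Hermitian and \eqref{Phi4d} gives $\Phi = 0$, so $dJ\theta = 0$; then a standard argument on compact Hermitian surfaces (Gauduchon, or directly from $dJ\theta = 0$ together with $d\omega = \theta\wedge\omega$ and $d(\theta\wedge\omega) = 0$) shows $\theta$ is parallel — e.g. $dJ\theta = 0$ plus $\delta\theta$-control from the $\omega$-component forces $\nabla\theta = 0$ — which is precisely the Vaisman condition.

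The main obstacle I anticipate is step three: correctly matching normalization constants across \eqref{Phi4d}, \eqref{dJtheta}, \eqref{Phi-comp}, the pointwise identity $\langle\alpha,\omega\rangle = \delta\theta$, and the Hodge-star conventions, so that the boundary-term-free integral genuinely forces $\int_M|N|^2 = 0$ rather than an inconclusive sign-indefinite combination. A secondary subtlety is the final passage from $dJ\theta = 0$ on a compact Hermitian surface to $\nabla\theta = 0$: this is where compactness is used again (via an integration-by-parts / maximum-principle argument in the spirit of Gauduchon's theorem on the Lee form), and I would either cite the relevant Hermitian-surface result or supply the short Bochner-type argument, using that $\theta$ is then both closed-up-to-$\omega$-terms and co-closed after averaging.
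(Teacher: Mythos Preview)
Your Step~3 is fine in spirit --- integrating the $\omega$-component does yield $\int_M |N|^2 = 0$, and in fact the paper does this more directly: the $\omega$-component of \eqref{Phi=dJtheta} is the pointwise identity $\delta\theta = -\tfrac14|N|^2$, whose integral over a compact manifold immediately forces $N\equiv 0$.

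The genuine gap is in Step~4 (and, for the same reason, in your ``trivial direction''). You assert that $N=0$ gives $\Phi = 0$ via \eqref{Phi4d}; this is false. With $N=0$, formula \eqref{Phi4d} reduces to
\[
\Phi = \tfrac14\big(|\theta|^2\,\omega - \theta\wedge J\theta\big),
\]
which is nonzero whenever $\theta\neq 0$. Correspondingly, on a Vaisman (non-K\"ahler) manifold one has $dJ\theta = -\big(|\theta|^2\,\omega - \theta\wedge J\theta\big)\neq 0$, so neither side of $-4\Phi = dJ\theta$ vanishes; they merely agree. Your claim ``$\nabla\theta = 0$ forces $dJ\theta = 0$'' fails because $d(J\theta)$ involves $\nabla J$ as well as $\nabla\theta$.

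Once $N=0$ is established, the paper proceeds differently: the $\leftr\Lambda^{0,2}M\rightr$-component of \eqref{Phi=dJtheta} gives $(d\theta)'' = 0$, so $d\theta$ is anti-self-dual; being exact on a compact manifold, it vanishes, and the structure is locally conformally K\"ahler. The surviving content of \eqref{Phi=dJtheta} is then $\alpha = 0$, i.e.\ $(\nabla_X\theta)(JY) = (\nabla_Y\theta)(JX)$, which is precisely the \emph{pluricanonical} condition of Kokarev. The passage from ``compact lcK and pluricanonical'' to ``Vaisman'' is not a short Bochner/Gauduchon argument as you suggest; the paper invokes the 2017 result of Moroianu--Moroianu \cite{MM} for exactly this step. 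You should either cite that result or recognize that this implication is where the real depth lies.
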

\begin{proof}
    Only the statement in the compact case needs proof, as the local equivalence holds by the formula found for $-4\Phi - dJ\theta$ just above the statement. The $\omega$-component of (\ref{Phi=dJtheta}) yields
$$ \delta \theta = -\frac{1}{4} |N|^2 \; ,$$    
so, in the compact case, by integration, we get that $N\equiv 0$, therefore the almost complex structure must be integrable. Using this, the $\leftr \La^{0,2}M \rightr$-component of (\ref{Phi=dJtheta}) becomes $J(d\theta)''= 0$. As $\langle d\theta , \omega \rangle = 0$ always holds, it follows that $d\theta$ is an anti-self-dual form. In the compact case this immediately implies that $d \theta =0$. Therefore, thus far, we have proved that the manifold is locally conformally K\"ahler (lcK). Note that relation (\ref{Phi=dJtheta}) becomes equivalent with the fact that the lcK manifold is pluricanonical (see \cite{Kok}). Finally, we use a result of Moroianu-Moroianu \cite{MM} which establishes that compact lcK pluricanonical manifolds (of any dimension) are necessarily Vaisman.
\end{proof}
    
\section{A global characterization of compact $\mathcal{AH}_1$ 4-manifolds}

In this section, we use an identity established by Sekigawa \cite{Sek87} for compact almost Hermitian 4-manifolds to obtain a global characterization of compact $\mathcal{AH}_1$ 4-manifolds from seemingly weaker conditions. For completeness, we review Sekigawa's proof, using our conventions and notation, and we set the identity in a favorable form for our purpose.


Following \cite{Sek87}, we compute the Chern number $c_1^2(M)$ in two different ways. The first way is directly, using the expression $\gamma$ above:
\begin{eqnarray}  \nonumber
c_1^2(M) &=& \frac{1}{4 \pi^2} \int_M \gamma \wedge \gamma =  \frac{1}{4 \pi^2} \int_M ( \rho^* \wedge \rho^* + 2 \rho^* \wedge \Phi ) = \\ \nonumber
&=&  \frac{1}{4 \pi^2} \int_M \Big( |{\rho^*}''|^2 + \frac{(s^*)^2}{8} - |{\rho_0^*} '|^2 \Big) dV_g 
+ \frac{1}{4 \pi^2} \int_M 2 \rho^* \wedge \Phi \; .
\end{eqnarray}
Using that in dimension 4 ${\rho_0^*}' = \rho_0$, we get 
\begin{equation} \label{c1sqv1}
c_1^2(M) = \frac{1}{4 \pi^2} \int_M \Big( |{\rho^*}''|^2 + \frac{(s^*)^2}{8} - |\rho_0|^2 \Big) dV_g
+ \frac{1}{4 \pi^2} \int_M 2 \rho^* \wedge \Phi \; .
\end{equation}
The second way, we compute $c_1^2(M)$ via Chern-Weil formulae
$$c_1^2(M) = 2\chi(M) + 3 \sigma(M) = \frac{1}{4 \pi^2} \int_M \big( \frac{s^2}{24} - \frac{1}{2} |{\rm Ric}_0 |^2 + 2 |W^+|^2 \big) dV_g \; .$$
Decomposing ${\rm Ric}_0$ into its $J$-invariant part ${\rm Ric}_0^{'}$ and its $J$-anti-invariant part ${\rm Ric}_0^{''}$, note that 
$$ |{\rm Ric}_0|^2 = | {\rm Ric}_0^{'} |^2 + |{\rm Ric}_0^{''}|^2 = 2 |\rho_0 |^2 + |{\rm Ric}_0^{''}|^2 \; .$$
Similarly, using the $U(2)$-decomposition of $W^+$ from the previous section, we have
$$|W^+|^2 = |W^+_1|^2 + |W^+_2|^2 + |W^+_3|^2 = \frac{\kappa^2}{24} + |{\rho^*}''|^2 + |W^+_3|^2 \; .$$
Using these, one obtains the following expression:
\begin{equation} \label{c1sqv2}
c_1^2(M) = \frac{1}{4 \pi^2} \int_M \Big( \frac{s^2}{24} - |\rho_0 |^2 - \frac{1}{2} |{\rm Ric}_0^{''} |^2 + \frac{\kappa^2}{12} + 2|{\rho^*}''|^2 + 2|W^+_3|^2 \Big) dV_g 
\end{equation}

\noindent
Subtracting (\ref{c1sqv1}) from (\ref{c1sqv2}), one gets Sekigawa's integral identity for a closed almost Hermitian 4-manifold. We write this identity in a form that best suits our interest here.
\begin{prop} \label{if-prop} (\cite{Sek87})
On any compact almost Hermitian 4-manifold $(M^4, g, J, \omega)$ we have
\begin{equation} \label{if}
0 = \int_M \Big[ \frac{(s^* - s)^2}{16} + |{\rho^*}''|^2 + 2 |W_3^+|^2 - \frac{1}{2} |{\rm Ric}_0''|^2 \Big] dV_g - 
2 \int_M \rho^* \wedge \Phi \; .
\end{equation}
\end{prop}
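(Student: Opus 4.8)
The plan is to carry out the double computation of the Chern number $c_1^2(M)$ that has been prepared above. On one hand, $c_1^2(M) = \frac{1}{4\pi^2}\int_M \gamma\wedge\gamma$ with the explicit representative $\gamma = \rho^* + \Phi$; expanding $\gamma\wedge\gamma$, using $\Phi\wedge\Phi = 0$ in dimension $4$ (the remark after (\ref{Phi4dloc})), writing out $\rho^*\wedge\rho^*$ via the $U(2)$-decomposition of $\rho^*$, and using the dimension-$4$ identity ${\rho_0^*}' = \rho_0$, one obtains formula (\ref{c1sqv1}). On the other hand, the Gauss--Bonnet--signature identity $c_1^2(M) = 2\chi(M) + 3\sigma(M)$, together with the refinements $|{\rm Ric}_0|^2 = 2|\rho_0|^2 + |{\rm Ric}_0''|^2$ and $|W^+|^2 = \tfrac{\kappa^2}{24} + |{\rho^*}''|^2 + |W_3^+|^2$, gives formula (\ref{c1sqv2}). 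Since both right-hand sides compute the same topological number, subtracting (\ref{c1sqv1}) from (\ref{c1sqv2}) eliminates $c_1^2(M)$ and leaves an identity among curvature integrals only.

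After this subtraction the $|\rho_0|^2$ contributions cancel, a single $|{\rho^*}''|^2$, the term $2|W_3^+|^2$, and $-\tfrac12|{\rm Ric}_0''|^2$ survive, and $-\frac{1}{4\pi^2}\int_M 2\rho^*\wedge\Phi$ is left over from (\ref{c1sqv1}); the scalar-curvature contribution that remains is $\frac{s^2}{24} + \frac{\kappa^2}{12} - \frac{(s^*)^2}{8}$. The one genuine computation is to recognize this as a perfect square: substituting $\kappa = \tfrac{3s^*-s}{2}$ from (\ref{kappa-s}) gives
\[
\frac{s^2}{24} + \frac{\kappa^2}{12} - \frac{(s^*)^2}{8} = \frac{1}{48}\big[\,2s^2 + (3s^* - s)^2 - 6(s^*)^2\,\big] = \frac{3}{48}(s^* - s)^2 = \frac{(s^* - s)^2}{16}\;.
\]
Multiplying the resulting identity through by $4\pi^2$ then produces exactly (\ref{if}).

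I expect no real obstacle: the substance of the proposition is already encoded in the two derived expressions for $c_1^2(M)$, and the final step is the one-line algebra above. The only places that require care are bookkeeping in deriving (\ref{c1sqv1}) and (\ref{c1sqv2}) — computing $|W_1^+|^2 = \kappa^2/24$ and $|W_2^+|^2 = |{\rho^*}''|^2$ from the explicit block forms (\ref{w^+1})--(\ref{w^+3}), tracking the factor $2$ between $|{\rm Ric}_0'|^2$ and $|\rho_0|^2$, and handling the cross term $2\rho^*\wedge\Phi$ in $\gamma\wedge\gamma$. Compactness is used only to guarantee that $\int_M\gamma\wedge\gamma$ equals the topological quantity $4\pi^2 c_1^2(M)$ and that $2\chi + 3\sigma$ equals the Chern--Weil curvature integral; once (\ref{c1sqv1}) and (\ref{c1sqv2}) are in hand, the identity follows immediately by subtraction and the scalar simplification.
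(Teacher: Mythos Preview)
Your proposal is correct and follows exactly the paper's own argument: compute $c_1^2(M)$ once via $\gamma\wedge\gamma$ with $\gamma=\rho^*+\Phi$ and $\Phi\wedge\Phi=0$ to get (\ref{c1sqv1}), once via $2\chi+3\sigma$ and the $U(2)$-refinements of $|{\rm Ric}_0|^2$ and $|W^+|^2$ to get (\ref{c1sqv2}), subtract, and use the scalar identity $\frac{s^2}{24}+\frac{\kappa^2}{12}-\frac{(s^*)^2}{8}=\frac{(s^*-s)^2}{16}$. The only remark is that the relation $\kappa=\tfrac{3s^*-s}{2}$ you invoke is stated in (\ref{w^+1}) rather than (\ref{kappa-s}), a citation detail only.
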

\noindent
The proof is already given above, noting that, via the definition of $\kappa$
$$ \frac{s^2}{24} + \frac{\kappa^2}{12} - \frac{(s^*)^2}{8} = \frac{(s^* - s)^2}{16} \; .$$
It is also worth observing that the second integral on the right side of (\ref{if}) is usually expanded as
$$\int_M \rho^* \wedge \Phi =  \int_M \langle ({\rho^*}'' - \rho_0 + \frac{s^*}{4} \omega), \Phi \rangle dV_g\; ,$$
where one uses that, in dimension 4, $\rho^*_0 = \rho_0$. Our preference to leave the term
$ \displaystyle{ \int_M \rho^* \wedge \Phi }$ in this form is motivated by the Remarks \ref{suff-conds} and \ref{equiv-forms} below.   

\vspace{0.2cm}

\noindent We use Proposition \ref{if-prop} to prove
\begin{prop} \label{ah1}
Let $(M^4, g, J, \omega)$ be a compact almost Hermitian 4-manifold. Then the following are equivalent:

(i) The Ricci tensor is $J$-invariant and $\displaystyle{\int_M \rho^* \wedge \Phi = 0}$;
 
(ii) The manifold is in the class $\mathcal{AH}_1$, that is $(M^4, g, J, \omega)$ satisfies Gray's first curvature condition $(G_1)$.
\end{prop}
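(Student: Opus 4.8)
The plan is to deduce both implications essentially immediately from Sekigawa's identity (\ref{if}) of Proposition \ref{if-prop}, combined with the local characterization of $\mathcal{AH}_1$ recorded in Proposition \ref{ah1-locchar}(ii). The one conceptual point is that, once the Ricci tensor is assumed $J$-invariant, the right-hand side of (\ref{if}) becomes a sum of three manifestly non-negative curvature integrals minus $2\int_M \rho^*\wedge\Phi$; hence the single scalar hypothesis $\int_M \rho^*\wedge\Phi = 0$ is enough to force pointwise vanishing of all the curvature quantities that obstruct $(G_1)$.

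For the implication (i) $\Rightarrow$ (ii) I would argue as follows. Assuming ${\rm Ric}$ is $J$-invariant, one has ${\rm Ric}_0'' = 0$, so the last term in the bracket of (\ref{if}) disappears; imposing in addition $\int_M \rho^*\wedge\Phi = 0$, identity (\ref{if}) collapses to $0 = \int_M \big[\, (s^*-s)^2/16 + |{\rho^*}''|^2 + 2|W_3^+|^2\, \big]\,dV_g$. Since the integrand is pointwise non-negative, each summand vanishes identically: $s^* = s$, ${\rho^*}'' = 0$, $W_3^+ = 0$. Translating through (\ref{w^+1})--(\ref{w^+3}) --- namely $\kappa - s = \tfrac32(s^*-s)$ and $W_2^+ = \tfrac12({\rho^*}''\otimes\omega + \omega\otimes{\rho^*}'')$ --- these say precisely that $\kappa - s = 0$, $W_2^+ = 0$, $W_3^+ = 0$, which together with $J$-invariance of ${\rm Ric}$ is exactly Proposition \ref{ah1-locchar}(ii); hence the manifold lies in $\mathcal{AH}_1$.

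For (ii) $\Rightarrow$ (i) I would run the same dictionary in reverse: if $(M^4,g,J,\omega)\in\mathcal{AH}_1$, then by Proposition \ref{ah1-locchar}(ii) the Ricci tensor is $J$-invariant (giving ${\rm Ric}_0'' = 0$), $W_2^+ = 0$ (giving ${\rho^*}'' = 0$), $W_3^+ = 0$, and $\kappa - s = 0$ (giving $s^* - s = 0$). Substituting into (\ref{if}), every term in the bracket is identically zero, so (\ref{if}) reduces to $0 = -2\int_M \rho^*\wedge\Phi$, which is the remaining claim in (i).

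I do not anticipate a serious obstacle here: compactness enters only to make Sekigawa's integral identity available, and the crux is simply the non-negativity argument above, which upgrades an integral hypothesis to pointwise vanishing of $s^* - s$, ${\rho^*}''$ and $W_3^+$. The only place warranting care is the bookkeeping between the curvature-operator components ($\kappa$, $W_2^+$, $W_3^+$, ${\rm Ric}_0''$) appearing in Proposition \ref{ah1-locchar}(ii) and the forms (${\rho^*}''$, $s^*-s$) appearing in (\ref{if}), which is supplied by the $U(2)$-decomposition formulas (\ref{w^+1})--(\ref{w^+3}).
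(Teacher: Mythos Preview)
Your proposal is correct and essentially identical to the paper's own proof: both directions are deduced directly from Sekigawa's integral identity (\ref{if}) together with Proposition \ref{ah1-locchar}(ii), with the forward implication resting on the non-negativity of the integrand once ${\rm Ric}_0''=0$. If anything, you spell out the dictionary between $(s^*-s,\,{\rho^*}'',\,W_3^+)$ and $(\kappa-s,\,W_2^+,\,W_3^+)$ a bit more explicitly than the paper does.
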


\begin{proof}
For (i) $ \Rightarrow $ (ii), relation (\ref{if}) and the assumptions imply 
$$ 0 = \int_M \Big(  \frac{(s^* - s)^2}{16} + |{\rho^*}''|^2 + 2 |W_3^+|^2 \Big) dV_g \; .$$
All three terms must vanish and the conclusion follows from Proposition \ref{ah1-locchar}, (ii).
The implication (ii) $ \Rightarrow $ (i) again follows from (\ref{if}).
%
%
\end{proof}

\begin{remark} \label{suff-conds}
Note that the equality $ \displaystyle{ \int_M \rho^* \wedge \Phi  = 0}$ follows from the assumption that $\rho^*$ or $\Phi$ is an exact form. Indeed, in such a case, the other is a closed form (because their sum is the closed canonical Chern form $\gamma$), so $\rho^* \wedge \Phi$ is an exact 4-form, whose integral is $0$ by Stokes' Theorem. As we will see in Section 5, on a $\mathcal{AH}_1$ 4-manifold, it follows that $\rho^* \wedge \Phi = 0$ actually holds point-wise.
\end{remark}

\begin{remark} \label{equiv-forms}
A different formulation of Proposition \ref{ah1} is: ``Let $(M^4, g, J, \omega)$ be a compact almost Hermitian 4-manifold with a $J$-invariant Ricci tensor. Then
$$\int_M \rho^* \wedge \rho^* \leq \frac{1}{4\pi^2} c_1^2(M) \; ,$$ 
with equality if and only if the manifold is of class $\mathcal{AH}_1$.''

\noindent
Indeed, by the definition of $\gamma$ and the fact that $\Phi \wedge \Phi = 0$, the above inequality is equivalent with 
$$ \int_M \rho^* \wedge \Phi  \geq 0 \; ,$$
and the equality case follows just as described in the proof of Proposition \ref{ah1}.
\end{remark}

\noindent
Finally, note that both (i) and (ii) in Proposition \ref{ah1} include the case that the manifold is K\"ahler, as well as the case of the basic $\mathcal{AH}_1$ examples from Remark \ref{basic-exp}. As already mentioned, we conjecture that there are no other non-K\"ahler examples in the compact case. The last two results of this section are immediate consequences of Proposition \ref{ah1} and can be seen as partial support for this conjecture.

\begin{cor} \label{herm-case}
Let $(M^4, g, J, \omega)$ be a compact Hermitian 4-manifold with $J$-invariant Ricci tensor and that satisfies $\displaystyle{\int_M \rho^* \wedge \Phi = 0}$. Then $(M^4, g, J, \omega)$ must be a K\"ahler surface.
\end{cor}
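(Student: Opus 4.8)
The plan is to read this off from Proposition \ref{ah1} together with the Hermitian version of Remark \ref{AK1orH1=K}. Since $(M^4, g, J, \omega)$ is a compact almost Hermitian 4-manifold whose Ricci tensor is $J$-invariant and which satisfies $\int_M \rho^* \wedge \Phi = 0$, condition (i) of Proposition \ref{ah1} is met, so the manifold belongs to the class $\mathcal{AH}_1$. Being also Hermitian, it lies in $\mathcal{H}_1$, and what remains is to establish that in the compact $4$-dimensional setting $\mathcal{H}_1 = \mathcal{K}$; I would include the short self-contained argument rather than merely cite Remark \ref{AK1orH1=K}.

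First I would use integrability of $J$, so that $N \equiv 0$, to reduce the identity (\ref{kappa-s}) to
$$ s^* - s = - |\theta|^2 - 2\delta\theta \; . $$
Next, since the manifold is of class $\mathcal{AH}_1$, characterization (ii) of Proposition \ref{ah1-locchar} gives $\kappa - s = 0$; in view of the relation $s^* - s = \tfrac{2}{3}(\kappa - s)$ recorded in (\ref{kappa-s}), this is equivalent to $s^* - s = 0$. Combining the two displays yields the pointwise identity $|\theta|^2 = -2\delta\theta$ on $M$.

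Then I would integrate this over the compact manifold: as $\int_M \delta\theta \, dV_g = 0$ by Stokes' theorem (with $\delta$ the codifferential), we obtain $\int_M |\theta|^2 \, dV_g = 0$, hence $\theta \equiv 0$. Consequently $d\omega = \theta \wedge \omega = 0$, so $\omega$ is symplectic, and together with the integrability of $J$ this means $(M^4, g, J, \omega)$ is a Kähler surface, as claimed.

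I do not anticipate any real obstacle: the corollary is a direct consequence of Proposition \ref{ah1} and the universal identity (\ref{kappa-s}). The only point requiring a little care is the sign convention in the integration by parts — namely that $\delta$ is normalized so that $\int_M \delta\theta \, dV_g = 0$ — which forces $\theta$ to vanish rather than merely to be co-closed.
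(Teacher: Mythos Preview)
Your proposal is correct and follows essentially the same route as the paper's proof: apply Proposition \ref{ah1} to land in $\mathcal{AH}_1$, then use $N=0$ and $\kappa-s=0$ in (\ref{kappa-s}) to obtain $0=-|\theta|^2-2\delta\theta$, and integrate over the compact manifold to force $\theta\equiv 0$. The paper's version is just a terser write-up of exactly these steps.
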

\begin{proof} By Proposition \ref{ah1}, we have that $(M^4, g, J, \omega)$ satisfies Gray's 1st curvature condition, so, using the Hermitian condition, by (\ref{kappa-s}) we have
$$ 0 = s^* - s = -|\theta|^2 -2\delta \theta  \; .$$
Integrating this implies $\theta = 0$.
\end{proof} 

\begin{cor}
Let $(M^4, g, J, \omega)$ be a compact Vaisman 4-manifold with a $J$-invariant Ricci tensor. Then $(M^4, g, J, \omega)$ must be a K\"ahler surface.
\end{cor}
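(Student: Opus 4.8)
The plan is to read off this statement as a quick corollary of Proposition~\ref{Phi=dJth}, Remark~\ref{suff-conds}, and Corollary~\ref{herm-case}, so the main point is just to verify that the hypotheses of those results are in force. Recall first that a Vaisman structure is by definition Hermitian ($N\equiv 0$) with parallel Lee form, $\nabla\theta=0$; in particular $\delta\theta=0$, $d\theta=0$, and $|\theta|^2$ is constant. Thus $(M^4,g,J,\omega)$ is in particular a compact Hermitian $4$-manifold with $J$-invariant Ricci tensor, and the only thing left to establish before quoting Corollary~\ref{herm-case} is that $\int_M \rho^*\wedge\Phi = 0$.

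For that I would invoke Proposition~\ref{Phi=dJth}: since $(M^4,g,J,\omega)$ is compact and Vaisman, the equality $-4\Phi = dJ\theta$ holds, so $\Phi$ is an exact $2$-form, and then Remark~\ref{suff-conds} gives $\int_M \rho^*\wedge\Phi = 0$ immediately (the canonical Chern form $\gamma=\rho^*+\Phi$ is closed, hence $\rho^*$ is closed too, and $\rho^*\wedge\Phi$ is an exact $4$-form). Alternatively, one can see the exactness of $\Phi$ by a direct computation without citing Proposition~\ref{Phi=dJth}: with $N\equiv 0$, formula~(\ref{Phi4d}) reduces to $\Phi=\tfrac14\big(|\theta|^2\omega-\theta\wedge J\theta\big)$, while (\ref{dJtheta}) together with $\nabla\theta=0$ (so the correction $2$-form $\alpha$ vanishes) gives $dJ\theta=\theta\wedge J\theta-|\theta|^2\omega=-4\Phi$.

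With $\int_M \rho^*\wedge\Phi = 0$ in hand, Corollary~\ref{herm-case} applies and yields that $(M^4,g,J,\omega)$ is a Kähler surface. (Unwinding that corollary: Proposition~\ref{ah1} forces the $\mathcal{AH}_1$ condition, so by (\ref{kappa-s}) in the Hermitian case $0=s^*-s=-|\theta|^2-2\delta\theta$, and integrating, or using $\delta\theta=0$ directly here, gives $\theta\equiv 0$, whence $d\omega=\theta\wedge\omega=0$.) I do not anticipate any real obstacle: all the analytic work has already been done in Proposition~\ref{Phi=dJth}, Remark~\ref{suff-conds}, and Corollary~\ref{herm-case}, and the only care needed is checking that compactness, integrability of $J$, and $J$-invariance of the Ricci tensor are all present, which they are.
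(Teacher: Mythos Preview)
Your proof is correct and follows essentially the same route as the paper: use Proposition~\ref{Phi=dJth} to see that $\Phi=-\tfrac14\,dJ\theta$ is exact on a compact Vaisman $4$-manifold, apply Remark~\ref{suff-conds} to get $\int_M \rho^*\wedge\Phi=0$, and then conclude via Corollary~\ref{herm-case}. The paper's proof is more terse (it stops after Remark~\ref{suff-conds}, leaving the last step implicit), but the logic is the same.
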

\begin{proof}
For a Vaisman 4-manifold, $\Phi = - \frac{1}{4} dJ\theta$, as seen in Proposition \ref{Phi=dJth}, so the condition
$\displaystyle{\int_M \rho^* \wedge \Phi = 0}$ holds, as observed in Remark \ref{suff-conds}.
\end{proof}

\section{The differential Bianchi identity for almost Hermitian 4-manifolds} 

This is the most technical part of our paper, but it is essential for the steps we take in the next section toward a classification of 4-dimensional $\mathcal{AH}_1$ 4-manifolds. We also hope that the lemmas contained here will be helpful for other projects involving 4-dimensional almost Hermitian geometry, so we try to obtain the results in full generality. We actually get a great help from \cite{AAD}, as all the results here have been obtained for almost K\"ahler 4-manifolds in Subsection 2.4 of that paper. The computation techniques are generally the same as in the almost K\"ahler case, we just have to keep track of the extra terms in $\theta$ that will appear. The main goal is to express the information contained in the two halves (self-dual and anti-self-dual) of the differential Bianchi identity
\begin{equation*}\label{bianchipm}
 \delta W ^+  = C ^+, \ \  \delta W ^-  = C ^- ,
\end{equation*}
by further decomposing the above in terms of various $U(2)$-components of the curvature. We want this information expressed in terms of forms (rather than tensors) with the purpose of eventually applying Hodge theory, under suitable assumptions. Above, $C$ is the {\it Cotton-York tensor}, a section of $\Lambda^2 M \otimes \Lambda^1 M$ defined by
$$C(X, Y, Z) = - (d^{\na} h)(X, Y, Z) = - (\nabla _X h )(Y, Z) + (\nabla_Y h)(X, Z),$$ 
where $h = \frac{1}{2}  {\rm Ric}_0 + \frac{s}{24} g$ denotes the {\it normalized Ricci tensor}, seen as a $\Lambda^1 M$-valued 1-form via the metric, and $d^{\na}$ is the
Riemannian differential acting on $\Lambda^1M$-valued forms
$$ d^{\na} : \Lambda^1 M \otimes \Lambda^1 M \rightarrow \Lambda^2 M \otimes \Lambda^1 M \; .$$
We start by re-arranging  the well-known formula
$$\delta ({\rm Ric}_0 - \frac{s}{4} g) = 0 \; ,$$
which is obtained from a contraction of the full Bianchi identity $\delta W = C$. 
\begin{lemma} \label{RicformBianchi}
For any 4-dimensional almost Hermitian manifold $(M^4, g, J, \omega)$ the following identities hold:
\begin{equation} \label{deltarho0}
\delta \left( \rho_0 - \frac{s}{4}  \omega \right)  =  \frac{s}{4} J\theta + \iota_{\theta^\sharp} \rho_0 - J\delta ({\rm Ric}_0'') \; ;
\end{equation}
\begin{equation} \label{drho}
    d \rho = \frac{s}{4} \theta \wedge \omega - \theta \wedge \rho_0  - \star (J\delta ({\rm Ric}_0'') )          \; ;
\end{equation}
\begin{equation} \label{drho0}
  d \rho_0 = - \frac{1}{4} ds \wedge \omega - \theta \wedge \rho_0   - \star (J\delta ({\rm Ric}_0'') )          \; .
\end{equation}
\end{lemma}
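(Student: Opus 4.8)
The plan is to derive everything from the single contracted Bianchi identity
\[
\delta\big({\rm Ric}_0 - \tfrac{s}{4} g\big) = 0,
\]
by first rewriting it in terms of the $J$-invariant and $J$-anti-invariant pieces of ${\rm Ric}_0$. Writing ${\rm Ric}_0 = {\rm Ric}_0' + {\rm Ric}_0''$ and recalling that the $J$-invariant trace-free Ricci tensor is encoded by the $2$-form $\rho_0$ via $\rho_0(X,Y) = {\rm Ric}_0'(JX,Y)$, the first task is to express $\delta({\rm Ric}_0')$ (as a $1$-form) in terms of $\delta\rho_0$ (as a $1$-form). This is a purely pointwise/algebraic manipulation once one knows how $\nabla$ interacts with $J$: from $\rho_0 = {\rm Ric}_0'(J\cdot,\cdot)$ one gets $\nabla_X \rho_0 = ({\rm Ric}_0')((\nabla_X J)\cdot,\cdot) + ({\rm Ric}_0')(J\cdot,\nabla_X(\cdot))$-type terms, and contracting produces $\delta\rho_0$ plus a correction coming from $\nabla\omega$, which by (\ref{nablaom-dim4}) is controlled by $\theta$ and $N$. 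I expect the correction term to organize itself precisely into the $\tfrac{s}{4}J\theta + \iota_{\theta^\sharp}\rho_0$ combination appearing in (\ref{deltarho0}), with the Nijenhuis contribution being absorbed into (or cancelling against) the $J\delta({\rm Ric}_0'')$ term. So the first displayed identity (\ref{deltarho0}) is really just the contracted Bianchi identity, re-expressed form-theoretically; the work is bookkeeping of the $\nabla J$ terms.

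For the second identity (\ref{drho}), I would pass from codifferentials to differentials using the Hodge star in dimension $4$. Since $\rho$ is a $J$-invariant (hence anti-self-dual modulo its $\omega$-component) $2$-form, one has the standard relation $d\rho = -\star\,\delta\star\rho$ up to sign conventions, and $\star$ acting on $2$-forms in dimension $4$ is an involution (with sign depending on self-duality type). Concretely, $\rho = \rho_0 + \tfrac{s^*}{4}\omega$ or similar — here one must be careful which scalar appears, but since $\rho$ uses ${\rm Ric}'$ and the relevant trace is $s/... $; in any case $d\rho$ differs from $d\rho_0$ only by $d$ of a multiple of $\omega$, which accounts for the difference between (\ref{drho}) and (\ref{drho0}): in (\ref{drho}) the term $\tfrac{s}{4}\theta\wedge\omega$ comes partly from $d\omega = \theta\wedge\omega$ and partly from $\tfrac14 ds\wedge\omega$, while in (\ref{drho0}) only the $-\tfrac14 ds\wedge\omega$ survives. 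The plan is therefore: (a) apply $\star$ to (\ref{deltarho0}); (b) use that $\star\iota_{\theta^\sharp}\rho_0 = \pm\theta\wedge\star\rho_0 = \mp\theta\wedge\rho_0$ (sign from $\rho_0$ being anti-self-dual) together with $\star J\theta = \pm\theta\wedge\omega$; (c) recognize $\star(J\delta({\rm Ric}_0''))$ as the stated error term. Then (\ref{drho}) and (\ref{drho0}) follow by adding/subtracting $d$ of the appropriate scalar-times-$\omega$ term and invoking $d\omega = \theta\wedge\omega$.

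The main obstacle I anticipate is purely one of sign and normalization discipline: keeping straight the conventions for $\star$ on self-dual versus anti-self-dual $2$-forms, the sign in $J\alpha = -\alpha(J\cdot)$ extended to forms, the factor relating $\delta$ and $\star d\star$, and the precise scalar (whether $s$, $s^*$, or $\kappa$) that rides along with $\omega$ inside each Ricci form. A secondary point requiring care is verifying that all the Nijenhuis-type terms generated when commuting $\nabla$ past $J$ really do collect into the single clean term $J\delta({\rm Ric}_0'')$ and do not leave a leftover depending explicitly on $N$; this should follow from the fact that ${\rm Ric}_0''$ itself, as the $J$-anti-invariant part of Ricci, is exactly the object that measures the failure of $J$-invariance, so its codifferential naturally packages those terms. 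Modulo these conventions, none of the three identities requires any input beyond (\ref{nablaom-dim4}), (\ref{theta-def}), the definition of $\rho$ and $\rho_0$, and the contracted Bianchi identity $\delta({\rm Ric}_0 - \tfrac{s}{4}g) = 0$; the entire lemma is a reformulation of that one equation in form language, with (\ref{drho}) and (\ref{drho0}) being Hodge-dual restatements of (\ref{deltarho0}) differing only by a $d(\text{scalar}\cdot\omega)$ term.
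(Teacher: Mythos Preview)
Your plan is correct and matches the paper's proof: both start from the contracted Bianchi identity $\delta({\rm Ric}_0 - \tfrac{s}{4}g)=0$, split ${\rm Ric}_0$ into its $J$-invariant and $J$-anti-invariant parts to obtain (\ref{deltarho0}), and then apply the Hodge star (using $\star(\iota_{\theta^\sharp}\rho_0)=\theta\wedge\rho_0$ since $\rho_0$ is anti-self-dual) to derive (\ref{drho}) and (\ref{drho0}). The paper streamlines your first step via the general identity $\delta(b)=\big((\iota_{\cdot}d\omega)',\,b\circ J\big)_g - J\delta(b\circ J)$ for any $J$-invariant symmetric tensor $b$, which involves only $d\omega$ (not $\nabla\omega$) and hence shows immediately that no Nijenhuis terms arise---resolving the worry you flagged.
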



\begin{proof}
For any $J$-invariant symmetric tensor $b \in S^{1,1}_{\mathbb{R}} M$ on an almost Hermitian manifold (of any dimension), one has 
\begin{equation} \label{deltabJinv}
\delta(b) = ( (\iota_{\cdot} d \omega)' , b \circ J )_g - J \delta(b \circ J)  \; , 
\end{equation}
which could be checked by a direct computation, using (\ref{nablaom-dim4}).
Next, apply (\ref{deltabJinv}) to the second term of 
$$\delta ({\rm Ric}_0'') + \delta ({\rm Ric}_0' - \frac{s}{4} g) = 0 \; ,$$ and use that in dimension 4
$$\iota_X d \omega = \theta(X) \omega - \theta \wedge JX^\flat \; .$$
The relation (\ref{deltarho0}) then follows by simply rearranging the terms. The relation (\ref{drho}) follows by taking the Hodge star of both sides of (\ref{deltarho0}), and noting that 
$$ \star (\iota_{\theta^\sharp} \rho_0) = - \star \iota_{\theta^\sharp} \star \rho_0 =  \theta \wedge \rho_0 \; .$$
Finally, relation (\ref{drho0}) follows by some rearranging of (\ref{drho}).
\end{proof}

Next, we obtain an alternative expression for the Cotton-York tensor $C$.
We follow the notation from \cite{AAD}, that is, for a given vector field $Z$, we denote by $C_Z$  the section of
$\La^2 M$,  defined  by  $C_Z (X,Y): = C(X,Y,Z)$, and similarly we
define $C^+_Z$ and $C^-_Z$. Denote also by $A_Z$ the
$\La^2M$-valued 1-form given by
$$A_Z  = (d^{\nabla} {\rm Ric}^{''})_{Z} - \iota_{JZ}\star(J \delta {\rm Ric}'') \; ,$$
and let $A^{\pm}_Z$ be the self-dual and  anti-self-dual components of
$A_Z$. The following is the correspondent of Lemma 3 from \cite{AAD}, adapted to almost Hermitian 4-manifolds.
%

\begin{lemma} \label{lemma-CY-AH} Let $(M, g, J,\omega)$  be an almost Hermitian 4-manifold. Then, for any vector field $Z$, the
Cotton-York tensor $C_Z$ is given by
\begin{eqnarray} \label{cotton-york}
2C_Z &= &\nabla_{JZ} \, \rho_0 - (J\theta)(Z) \rho_0 -\frac{1}{4}(Jds)(Z) \, \omega +(J{\rm Ric}_0'(\theta))(Z) \, \omega \\ \nonumber 
& & +(\theta \wedge {\rm Ric}_0'(Z))'' + \frac{1}{2} N_{J\rho_0(Z)^\sharp}
 + \frac{1}{6} ds \wedge Z^{\flat} - A_Z \; .
\end{eqnarray}
\end{lemma}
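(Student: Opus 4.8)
The plan is to prove Lemma~\ref{lemma-CY-AH} by the same strategy used in \cite{AAD} for the almost K\"ahler case, carefully tracking the extra terms in the Lee form $\theta$ (which vanishes in the almost K\"ahler setting). The starting point is the defining relation $C(X,Y,Z) = -(\nabla_X h)(Y,Z) + (\nabla_Y h)(X,Z)$ with $h = \tfrac12 {\rm Ric}_0 + \tfrac{s}{24} g$, which we rewrite as a $\Lambda^2 M$-valued object $C_Z$. Decomposing ${\rm Ric}_0 = {\rm Ric}_0' + {\rm Ric}_0''$ into its $J$-invariant and $J$-anti-invariant parts, the $J$-invariant part contributes, via $\rho_0(X,Y) = {\rm Ric}_0'(JX,Y)$, a term essentially of the form $d^\nabla \rho_0$ that must be re-expressed through $\nabla_{JZ}\rho_0$ using (\ref{nablaom-dim4}); this is precisely where the commutator $[\nabla, J]$ produces the $\theta$-terms and Nijenhuis terms. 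The $J$-anti-invariant part is packaged into the tensor $A_Z = (d^\nabla {\rm Ric}'')_Z - \iota_{JZ}\star(J\delta{\rm Ric}'')$ by design, so it contributes the $-A_Z$ summand directly, while the correction $\iota_{JZ}\star(J\delta{\rm Ric}'')$ recombines with pieces coming from Lemma~\ref{RicformBianchi}.

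Concretely, the key steps I would carry out in order are: first, expand $2C_Z = -2(\nabla_\cdot h)$-skew, substituting $h$ and using $\nabla_X(\tfrac{s}{24}g) = \tfrac{1}{24}(ds)(X)\, g$, which is the source of the $\tfrac16 ds\wedge Z^\flat$ and $-\tfrac14 (Jds)(Z)\,\omega$ terms (the latter after passing $\rho_0$ through $J$). Second, use $\nabla_X\omega$ from (\ref{nablaom-dim4}) to move the covariant derivative of ${\rm Ric}_0'$ onto $\rho_0$: schematically $(\nabla_X \rho_0)(\cdot,\cdot) = (\nabla_X {\rm Ric}_0')(J\cdot,\cdot) + {\rm Ric}_0'((\nabla_X J)\cdot,\cdot)$, and the $(\nabla_X J)$ term expands via (\ref{nablaom-dim4}) into the $\theta\wedge{\rm Ric}_0'(Z)$, the $(J{\rm Ric}_0'(\theta))(Z)\,\omega$, and the $\tfrac12 N_{J\rho_0(Z)^\sharp}$ contributions. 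Third, skew-symmetrize in the two free form-indices and identify the result as $\nabla_{JZ}\rho_0$ plus the algebraic terms; this uses that $d^\nabla\rho_0$ is the skew part and Lemma~\ref{RicformBianchi}(\ref{drho0}) to trade $d\rho_0$ for $-\tfrac14 ds\wedge\omega - \theta\wedge\rho_0 - \star(J\delta{\rm Ric}_0'')$. Fourth, collect the $J$-anti-invariant contributions and check they assemble exactly into $-A_Z$, using the definition of $A_Z$ and the relation $\star\iota_{\theta^\sharp} = -\iota_{\theta^\sharp}\star$ on $\Lambda^2$ already exploited in Lemma~\ref{RicformBianchi}.

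The main obstacle will be the bookkeeping in the third and fourth steps: one must correctly separate, among the many terms produced by expanding $(\nabla_X J)$ and by skew-symmetrizing, which pieces are already $J$-invariant (and hence belong with the $\rho_0$, $\omega$, and $N$ terms) versus which are $J$-anti-invariant (and must cancel into $A_Z$), and to do so one repeatedly needs the dimension-$4$ identities such as $\iota_X d\omega = \theta(X)\omega - \theta\wedge JX^\flat$, the Hodge-star intertwining relations for $\iota_{\theta^\sharp}$, and the decomposition (\ref{Nij-def})--(\ref{NphiJphi}) of the Nijenhuis tensor. Since the analogous computation in \cite{AAD} is only sketched and here there are strictly more terms, the safest route is to perform the computation in a local $g$-orthonormal frame adapted to $J$ (or in terms of the gauge $\phi$ and the $1$-forms $a,b,c,n$), verify the $\omega$-, $\Lambda^{1,1}_0$-, and $\leftr\Lambda^{0,2}M\rightr$-components of both sides separately, and then reassemble the invariant statement (\ref{cotton-york}). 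A useful internal consistency check is that contracting (\ref{cotton-york}) over $Z$ must reproduce the contracted Bianchi identity, equivalently Lemma~\ref{RicformBianchi}, and setting $\theta = 0$, $N = 0$ must recover Lemma~3 of \cite{AAD}.
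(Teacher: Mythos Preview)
Your plan is correct and follows essentially the same route as the paper's own proof: split ${\rm Ric}_0 = {\rm Ric}_0' + {\rm Ric}_0''$, package the anti-invariant part into $A_Z$, rewrite $(d^\nabla {\rm Ric}')_Z$ as $(d^\nabla\rho)_{JZ}$ plus the $(\nabla_X J)$-correction, then use the cyclic identity $(d^\nabla\rho)_{JZ} = -\nabla_{JZ}\rho + \iota_{JZ}d\rho$ together with Lemma~\ref{RicformBianchi} (specifically~(\ref{drho0})) to expand $\iota_{JZ}d\rho_0$. One small correction to your bookkeeping: the term $-\tfrac{1}{4}(Jds)(Z)\,\omega$ does \emph{not} come from the $\tfrac{s}{24}g$ piece of $h$; it arises from $\iota_{JZ}(d\rho_0)$ via the $-\tfrac{1}{4}ds\wedge\omega$ summand in~(\ref{drho0}), while the $\tfrac{s}{24}g$ part contributes only the $\tfrac{1}{6}ds\wedge Z^\flat$ term---this will sort itself out once you carry out the computation.
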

\begin{proof}
Up to a point, the proof goes just in \cite{AAD}. The Cotton-York tensor is rewritten as
\begin{equation}\label{basicpart}
C_Z = - \frac{1}{2} (d^{\na}{\rm Ric}^{''})_Z - \frac{1}{2} (d^{\na}
{\rm Ric}^{'})_Z + \frac{1}{12} ds\wedge Z^{\flat} \; ,
\end{equation}
and the main effort will be in computing the middle term.
As $\rho(\cdot, \cdot) = {\rm Ric}^{'} (J\cdot, \cdot)$, we have
\begin{equation} \label{dricinv} \begin{split}
 (d^{\na} {\rm Ric}^{'})(X, Y, Z) &= (\nabla _X {\rm Ric}^{'})(Y, Z) -  (\nabla
_Y {\rm Ric}^{'}) (X, Z) \\ &= (d^{\na} \rho)(X,Y,JZ) + \Big(
\rho(Y, (\na_X J)Z) - \rho(X, (\na_Y J)Z) \Big) .
\end{split} \end{equation}
For the term $d^{\na} \rho$ we have
\begin{equation} \nonumber \begin{split}
(d^{\na} \rho)(X,Y,JZ) &= (\na_X \rho)(Y,JZ) - (\na_Y \rho)(X,JZ)
\\ &= - (\na_{JZ} \rho)(X,Y) + (d\rho)(X,Y,JZ) \\
&= - (\na_{JZ} \rho)(X,Y) + \iota_{JZ}(d\rho)(X,Y) \\
&= - (\na_{JZ} (\rho_0 +\frac{s}{4} \omega)) (X, Y) +  \iota_{JZ}(d (\rho_0 + \frac{s}{4} \omega))(X,Y) \;.
\end{split} \end{equation}
After some straightforward computations, we get
\begin{equation} \nonumber \begin{split}
(d^{\na} \rho)_{JZ} &= - \nabla_{JZ} \rho_0 - \frac{s}{4} (\nabla_{JZ} \omega) + \iota_{JZ} (d \rho_0) \\
& + \frac{1}{4} ds \wedge Z^{\flat} - \frac{s}{4} (J\theta)(Z) \omega + \frac{s}{4} (\theta \wedge Z^{\flat}) \; .
\end{split} \end{equation}
For the last term of the first line, we use relation (\ref{drho0}) to obtain
$$\iota_{JZ} (d \rho_0) = \frac{1}{4} (Jds)(Z) \omega -  \frac{1}{4} ds \wedge Z^{\flat} + (J\theta)(Z) \rho_0 - \theta \wedge {\rm Ric}_0'(Z) - \iota_{JZ} \star(J \delta {\rm Ric}'') \; . $$
We replace this in the above equality to eventually get
\begin{equation} \label{dricinvterm1} \begin{split}
(d^{\na} \rho)_{JZ} &= - \nabla_{JZ} \rho_0 + \frac{s}{4} (\theta \wedge Z^{\flat})^{-} + (J\theta)(Z) \rho_0 +\frac{1}{4} (Jds)(Z) \omega \\
& - \frac{s}{8} (J\theta)(Z) \omega + \frac{s}{8} N_Z - \theta \wedge {\rm Ric}_0'(Z) - \iota_{JZ} \star (J\delta {\rm Ric}'')\; .
\end{split} \end{equation}

\noindent
For the last term of (\ref{dricinv}), as in \cite{AAD}, we observe that $\na_X J$ corresponds to a self-dual form $\na_X \omega$, and this commutes with the corresponding endomorphism associated with the anti-self-dual form $\rho_0$. Thus, we obtain
\begin{equation} \label{dricinvterm2} 
 \rho(Y, (\na_X J)(Z)) - \rho(X, (\na_Y J)(Z) )  = 
\end{equation} 
$$= \frac{s}{4}\Big((\na_Y \omega)(X, JZ) -(\na_X \omega)(Y, JZ) \Big
) + (\na_X \omega)(Y, \rho_0(Z)) - (\na_Y \omega) (X,
\rho_0(Z))  $$ 
$$ = \frac{s}{4} \Big( (\na_{JZ} \omega) (X,Y) - (\iota_{JZ} d \omega) (X,Y) \Big) - \Big( (\na_{{\rho_0} (Z)} \omega)(X,Y) - (\iota_{{\rho_0} (Z)} d \omega)(X, Y) \Big) \; .$$
A short computation using (\ref{nablaom-dim4}) shows that for any tangent vector $V$
$$ \nabla_{V} \omega - \iota_{V} d \omega = -\frac{1}{2} \theta(V) \omega + \frac{1}{2} N_{JV} + (V \wedge J\theta)^{-} \; .$$
Applying this successively in (\ref{dricinvterm2}) for $V = JZ$ and $V= {\rho_0} (Z)^\sharp$, we eventually get 
\begin{equation} \label{dricinvterm2fin} \begin{split}
\rho &(Y, (\na_X J)(Z)) - \rho(X, (\na_Y J)(Z) )  = \\ & 
= \frac{s}{8} (J\theta)(Z) \omega -  \frac{s}{8} N_Z + \frac{1}{2} \theta(\rho_0(Z)^\sharp) \omega \\ & - \frac{1}{2} N_{J\rho_0(Z)^\sharp} - \frac{s}{4} (\theta \wedge Z^{\flat})^{-} + (\theta \wedge {\rm Ric}_0'(Z))^{-}   \; .
\end{split} \end{equation}
Substituting (\ref{dricinvterm1}) and (\ref{dricinvterm2fin}) back in (\ref{dricinv}), after a bit more work, we get
\begin{equation} \label{dricinv+} \begin{split}
(d^{\na} {\rm Ric}^{'})_Z &= - \na_{JZ} \rho_0 + (J\theta)(Z) \rho_0 + \frac{1}{4}(J ds)(Z) \omega - (J{\rm Ric}_0'(\theta))(Z) \omega \\ & \ \ \ - \frac{1}{2} N_{J\rho_0(Z)^\sharp} - \big(\theta \wedge {\rm Ric}_0'(Z) \big)'' - \iota_{JZ} \star (J\delta {\rm Ric}'')\; .
\end{split} \end{equation}
Finally, relation (\ref{cotton-york}) follows from (\ref{basicpart}), (\ref{dricinv+}) and the definition of $A_Z$. 
\end{proof} 

\noindent
The next lemma rewrites the differential Bianchi
identities $\delta W ^{+}  = C ^{+}, \ \delta W ^{-}  = C ^{-} $ for an arbitrary almost Hermitian 4-manifold, so it is an extension of Lemma 3 from \cite{AAD} and concludes this section.

\begin{lemma} \label{Bianchi-lem} Let $(M^4, g, J, \omega)$ be an
almost Hermitian 4-manifold. Then the following identities hold: 
\begin{eqnarray}\label{ahbianchi+} \nonumber
 0 &=& -\frac{1}{4} (Jd(\kappa - s))(Z)\,\omega + \frac{1}{6} (d(\kappa - s) \wedge Z^{\flat})^+ - \frac{\kappa}{4} (J\theta)(Z) \, \omega - (J{\rm Ric}_0'(\theta))(Z) \, \omega \\ 
& & +\frac{\kappa}{4} (\theta \wedge Z^{\flat} )'' - (\theta \wedge {\rm Ric}_0'(Z))'' + \frac{1}{2} N_{{\rm Ric}_0'(Z)^\sharp} - \frac{\kappa}{8} N_Z   \\ \nonumber
& & + (\delta {\rho^*}'')(Z) \, \omega + \nabla_{{\rho^*}''(Z)^\sharp} \omega - (J\theta)(Z) \, {\rho^*}''  + \nabla_{JZ} {\rho^*}'' + 2(\delta W_3^+)_Z + A^+_Z  \; ; 
\end{eqnarray}
\begin{eqnarray}
\label{ahbianchi-} 0 &=& \nabla_{JZ} \rho_0 -(J\theta)(Z) \, \rho_0 + \frac{1}{6} (d s \wedge Z^{\flat})^- - 2\delta W^-_{Z} - A^-_Z \; .
\end{eqnarray}
\end{lemma}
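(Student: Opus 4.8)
The plan is to follow closely the strategy laid out in Lemma 3 of \cite{AAD}, upgrading each step to track the extra $\theta$-terms that appear in the almost Hermitian setting. The starting point is the Weitzenböck-type organization of the differential Bianchi identity into its self-dual and anti-self-dual halves, $\delta W^+ = C^+$ and $\delta W^- = C^-$, together with the $U(2)$-decomposition \eqref{u(2)} of the curvature operator. First I would take the expression for the Cotton-York tensor $2C_Z$ obtained in Lemma \ref{lemma-CY-AH}, equation \eqref{cotton-york}, and split it into its $\Lambda^+M$ and $\Lambda^-M$ components. On the left-hand side, $\delta W$ splits correspondingly into $\delta W^+$ and $\delta W^-$; using \eqref{w^+1}, \eqref{w^+2}, \eqref{w^+3} and the definitions of $\widetilde{{\rm Ric}'_0}$, $\widetilde{{\rm Ric}''_0}$, $W^-$, one computes $\delta W^\pm_Z$ in terms of $d\kappa$, $ds$, $\delta{\rho^*}''$, $\delta W_3^+$, $\delta W^-$, and the various Ricci forms. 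Matching the two sides component by component yields the two stated identities.

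The anti-self-dual identity \eqref{ahbianchi-} is the easier half: on $\Lambda^-M = \Lambda^{1,1}_0 M$ the only curvature pieces contributing are $\widetilde{{\rm Ric}'_0}$ (whose divergence is governed by Lemma \ref{RicformBianchi}, specifically \eqref{drho0}) and $W^-$, while $A_Z$ contributes its anti-self-dual part $A^-_Z$. Extracting the $\Lambda^-M$-component of \eqref{cotton-york} and the $\Lambda^-M$-component of $\delta W$, and recalling $h = \frac12 {\rm Ric}_0 + \frac{s}{24}g$ so that the $\frac16 ds \wedge Z^\flat$ term survives, should give \eqref{ahbianchi-} after modest rearrangement; here the key input is that $\nabla_{JZ}\rho_0$ and the $(J\theta)(Z)\rho_0$ term already appear in \eqref{cotton-york}, and the $\omega$-proportional and $\leftr\La^{0,2}M\rightr$-valued terms there are orthogonal to $\Lambda^-M$.

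The self-dual identity \eqref{ahbianchi+} is where the real work lies, since on $\Lambda^+M = \mathbb{R}\cdot\omega \oplus \leftr\La^{0,2}M\rightr$ one must handle $W_1^+$ (scalar, via $\kappa$), $W_2^+$ (mixing $\omega$ with ${\rho^*}''$), $W_3^+$, and $\widetilde{{\rm Ric}''_0}$ all at once. The plan is to use \eqref{kappa-s} to trade $s^* - s$ for $\kappa - s$ wherever it helps, to use Lemma \ref{RicformBianchi} again for the divergence of the $\omega$- and $\rho_0$-linked pieces, and to carefully expand $\delta W_2^+$ using \eqref{w^+2}; the term $\nabla_{JZ}{\rho^*}''$ and the ``transport'' term $\nabla_{{\rho^*}''(Z)^\sharp}\omega$ arise precisely from differentiating the product $\frac12({\rho^*}''\otimes\omega + \omega\otimes{\rho^*}'')$. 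The $N_Z$-terms and the $N_{{\rm Ric}_0'(Z)^\sharp}$-term are inherited from the Cotton-York expression \eqref{cotton-york} combined with the analogous Nijenhuis terms produced when one commutes covariant derivatives past $J$ using \eqref{nablaom-dim4}.

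The main obstacle I anticipate is bookkeeping rather than conceptual: keeping the $\omega$-proportional scalar terms, the $\Lambda^-M$-valued terms, and the $\leftr\La^{0,2}M\rightr$-valued terms cleanly separated while repeatedly applying \eqref{nablaom-dim4} to convert $\nabla$ of $\omega$, $\phi$, $J\phi$ into first-order data, and making sure the $\theta$-contributions — which are exactly the terms absent in the almost Kähler case of \cite{AAD} — land in the right slots with the right coefficients (in particular the $\frac{\kappa}{4}(\theta\wedge Z^\flat)''$ versus $(\theta\wedge{\rm Ric}_0'(Z))''$ split, and the sign on $(J\theta)(Z){\rho^*}''$). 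A secondary subtlety is the correct identification of $A^\pm_Z$: since $A_Z = (d^\nabla {\rm Ric}'')_Z - \iota_{JZ}\star(J\delta{\rm Ric}'')$ couples the $J$-anti-invariant Ricci part into both halves, one must verify that its self-dual part is precisely what remains after the $\widetilde{{\rm Ric}''_0}$-contribution to $\delta W^+$ is accounted for, and similarly on the anti-self-dual side. Once these identifications are pinned down, collecting terms gives the two displayed identities, and as a consistency check one can specialize $\theta = 0$ and recover Lemma 3 of \cite{AAD} verbatim.
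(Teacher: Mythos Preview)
Your proposal is correct and follows essentially the same approach as the paper: split the Cotton-York expression \eqref{cotton-york} into its self-dual and anti-self-dual parts and match against $\delta W^\pm$, with the self-dual half requiring explicit formulas for $(\delta W_1^+)_Z$ and $(\delta W_2^+)_Z$ computed from \eqref{w^+1} and \eqref{w^+2}. The paper's proof is terser---it simply records those two divergence formulas and declares the rest straightforward---but your anticipated bookkeeping (the $\theta$-terms in $\delta W_1^+$, the product-rule origin of $\nabla_{JZ}{\rho^*}''$ and $\nabla_{{\rho^*}''(Z)^\sharp}\omega$ in $\delta W_2^+$, and the role of $A_Z^\pm$) matches exactly what is needed.
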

\begin{proof} The proof is just as in \cite{AAD}; relation (\ref{ahbianchi-}) follows from $\delta W^{-} = C ^{-}$ by taking the anti-self-dual component of
(\ref{cotton-york}). Similarly, for (\ref{ahbianchi+}) we take the
self-dual component of (\ref{cotton-york}); we also use the
following expressions for $\delta W_1^+$ and $\delta W_2^+$ obtained after some straightforward computations starting from (\ref{w^+1}) and (\ref{w^+2}).
\begin{equation*}
(\delta W_1^+)_Z = -\frac{1}{8} (Jd\kappa)(Z) \, \omega -\frac{\kappa}{8} (J\theta)(Z) \, \omega + \frac{\kappa}{8} (\theta \wedge Z^{\flat})'' - \frac{\kappa}{16} N_Z  + \frac{1}{12} (d\kappa \wedge Z^{\flat})^+ ,
\end{equation*}
\begin{equation*} 
(\delta W^+_2)_Z = \frac{1}{2} \nabla_{JZ} ({\rho^*}'')  +
\frac{1}{2} \nabla_{({\rho^*}'')(Z)} \omega  - \frac{1}{2} (J\theta)_Z \, \rho^{*}{''} + \frac{1}{2} (\delta
\rho^{*}{''})(Z) \, \omega \; . \hspace{1.8cm}
\end{equation*}

\end{proof}

\section{Towards a classification of $\mathcal{AH}_1$ 4-manifolds}

In this section, we take some steps towards a classification of almost Hermitian 4-manifolds satisfying the condition $(G_1)$. At least in the compact case, we believe that there is strong evidence for the following.


\begin{conj} \label{conjecture}
If $(M^4, g, J, \omega)$ is a compact 4-dimensional almost Hermitian manifold of class $\mathcal{AH}_1$, then either the manifold is K\"ahler, or it is one of the basic examples described in Remark \ref{basic-exp}. 
\end{conj}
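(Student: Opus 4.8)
\noindent\emph{Proof proposal.}
The plan is to feed the pointwise description of the $\mathcal{AH}_1$ condition (Proposition \ref{ah1-locchar}(ii)) into the refined differential Bianchi identities of Lemma \ref{Bianchi-lem}, and then to run a dichotomy on the Lee form $\theta$. Assume $(M^4,g,J,\omega)$ is compact, connected and of class $\mathcal{AH}_1$; by Proposition \ref{ah1-locchar}(ii), ${\rm Ric}$ is $J$-invariant (so ${\rm Ric}_0''=0$), $W_2^+=W_3^+=0$ and $\kappa=s$, whence $W^+=W_1^+$ is determined by $s$, and $s^*=s$ together with $2\delta\theta=\frac14|N|^2-|\theta|^2$ by (\ref{kappa-s}). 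It follows that $\rho^*=\rho$, and from $\Phi=a\wedge b$ with $da=c\wedge b$, $db=-c\wedge a$ (Proposition \ref{ah1-locchar}(iv)) one has $d\Phi=(c\wedge b)\wedge b+a\wedge(c\wedge a)=0$; so $\Phi$ and $\rho^*=\rho$ are closed, $[\rho^*]=2\pi c_1(M)$, $\rho^*\wedge\Phi=0$ pointwise (Remark \ref{suff-conds}) and $\int_M\rho^*\wedge\rho^*=\frac1{4\pi^2}c_1^2(M)$ (Remark \ref{equiv-forms}).

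I would first extract the algebraic content of the Bianchi identities. With ${\rm Ric}_0''=0$ the tensor $A_Z$ (which is built only from ${\rm Ric}_0''$) vanishes, and the $\omega$-component of (\ref{ahbianchi+}) collapses to ${\rm Ric}_0'(\theta)=-\frac s4\theta$, i.e. ${\rm Ric}(\theta)=0$. Hence on the open set $U:=\{\theta\neq0\}$ the $J$-invariant symmetric tensor ${\rm Ric}$ has eigenvalues $0,0,\frac s2,\frac s2$ with kernel the $J$-invariant plane field $V:={\rm span}(\theta,J\theta)$; equivalently $\rho=\frac s2\,\omega_{V^\perp}$ on $U$, with $\omega_{V^\perp}$ the fundamental form of $V^\perp$, and (using $|{\rm Ric}_0|^2=2|\rho_0|^2$) $|\rho_0|^2=\frac{s^2}{8}$ there, so that $\rho^*\wedge\rho^*=\bigl(\frac{s^2}{8}-|\rho_0|^2\bigr)dV_g\equiv0$ on $U$. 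The remaining $\leftr\La^{0,2}M\rightr$-valued part of (\ref{ahbianchi+}) is a second pointwise identity tying $N$, $\theta$ and ${\rm Ric}_0'$ together, while (\ref{ahbianchi-}) expresses $\delta W^-$ through $\nabla\rho_0$, $\theta$ and $ds$; these, with the scalar relation above, should be organised into an overdetermined system for $(\theta,s,N)$.

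Now comes the dichotomy. If $\theta\equiv0$ then $d\omega=\theta\wedge\omega=0$, so the manifold is almost K\"ahler and hence of class $\mathcal{AK}_1=\mathcal{K}$ (Remark \ref{AK1orH1=K}), i.e. K\"ahler --- the first alternative of the conjecture. If $\theta\not\equiv0$, I would first argue, by unique continuation for the (overdetermined-elliptic) structure equations, that $U$ is dense, so that $\rho^*\wedge\rho^*\equiv0$ almost everywhere and therefore $c_1^2(M)=0$. The goal is then to prove $s\equiv0$ on $U$: on $\{s\neq0\}\cap U$ the closed form $\rho$ has constant rank $2$, so its kernel $V$ is an integrable distribution with $J$-invariant leaves and $\rho$ is a basic presymplectic form for it; analysing this fibred structure over the $2$-dimensional leaf space, together with (\ref{ahbianchi-}) and the vanishing of $\rho^*\wedge\rho^*$, should force $\{s\neq0\}\cap U=\emptyset$. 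Then $s\equiv0$ and hence ${\rm Ric}\equiv0$ on $U$, so by density ${\rm Ric}\equiv0$ on $M$ and $W^+=W_1^+=0$; thus $g$ is Ricci-flat and anti-self-dual, and Hitchin's classification (as recalled in Remark \ref{basic-exp}) identifies $(M^4,g)$ with a flat torus or a Ricci-flat K3 surface (or a finite quotient) carrying an arbitrary compatible almost complex structure --- a basic example.

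I expect the step ``$\theta\not\equiv0\Rightarrow s\equiv0$ on $U$'' to be the main obstacle, and it is exactly where the partial results of Section 5 stop short. The difficulty is of the same nature as in the Goldberg conjecture: the scalar curvature is not sign-controlled, so Sekigawa's positivity trick is unavailable, and one seems to need either a new Weitzenb\"ock-type integral identity tailored to the $\mathcal{AH}_1$ condition, or a complete classification of the compact foliated geometries produced by the Ricci kernel $V$ --- in particular, one must rule out any compact $\mathcal{AH}_1$ $4$-manifold carrying simultaneously a nonzero Lee form and nonvanishing scalar curvature. A secondary, more technical point is establishing the overdetermined-elliptic character of the system for $\theta$ (and for $N$, via the $\leftr\La^{0,2}M\rightr$-identity above) that is needed for the unique-continuation step, and thereby handling the possible coexistence on $M$ of an open K\"ahler region $\{\theta=0\}^{\circ}$ and the region $U$.
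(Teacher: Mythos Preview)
The statement you are attempting to prove is labeled a \emph{conjecture} in the paper: the authors explicitly do not claim a proof, and Section~5 is devoted to partial evidence for it. There is therefore no ``paper's own proof'' to compare against, only partial progress.

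Your first two paragraphs are correct and essentially reproduce Proposition~\ref{ah1-facts}: with ${\rm Ric}_0''=0$ the tensor $A_Z$ vanishes, the $\omega$-component of (\ref{ahbianchi+}) gives ${\rm Ric}_0(\theta)=-\tfrac{s}{4}\theta$, and $J$-invariance of ${\rm Ric}_0$ then forces the eigenvalue pattern $0,0,\tfrac{s}{2},\tfrac{s}{2}$ of ${\rm Ric}$ on $U=\{\theta\neq0\}$, whence $\rho^*\wedge\rho^*=0$ there. Your verification of $d\Phi=0$ directly from Proposition~\ref{ah1-locchar}(iv) is fine; the paper obtains it instead via $d\rho=0$ (Lemma~\ref{RicformBianchi}) together with $d\gamma=0$. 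The pointwise vanishing $\rho^*\wedge\Phi=0$ is Proposition~\ref{ah1-facts}(f), not Remark~\ref{suff-conds}, but this is a minor misattribution.

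The genuine gap is exactly where you locate it. The implication ``$\theta\not\equiv0\Rightarrow s\equiv0$ on $U$'' is the heart of the conjecture and is not established. Your sketch --- integrability of the rank-$2$ kernel of the closed form $\rho$ and analysis of the resulting leaf space --- is suggestive, but no mechanism is given that actually forces $s=0$. The paper's own partial results show how far one presently gets: under the \emph{additional} global hypothesis $s\neq0$ everywhere, Proposition~\ref{ah1-Gaud} yields only $\delta\theta=0$, and a \emph{further} hypothesis $d\theta=0$ is then needed (in the subsequent proposition) to reach a contradiction. Nothing in your outline bypasses either of these extra hypotheses. The unique-continuation step is likewise unjustified: you assert that the system for $\theta$ (and $N$) is overdetermined-elliptic, but no symbol computation or operator is exhibited, and the identities you cite do not by themselves preclude coexistence of an open K\"ahler region $\{\theta=0\}^\circ$ with a non-K\"ahler region $U$. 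In short, your proposal is an accurate diagnosis of the structure of the problem and of the principal obstruction, matching the paper's own assessment, but it is not a proof --- consistent with the conjecture's open status.
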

\noindent
Theorem \ref{Eah1} from the Introduction is a partial answer supporting this conjecture, as are most of the results proved in this section. 



\vspace{0.2cm}
\noindent
We start by establishing some local consequences of the differential Bianchi identities combined with condition $(G_1)$ on an almost Hermitian 4-manifold.

\begin{prop} \label{ah1-facts}
Let $(M^4, g, J, \omega)$ be an almost Hermitian 4-manifold of class $\mathcal{AH}_1$. Then the following hold:

\vspace{0.2cm}
(a) ${\rm Ric}_0(\theta) = -\frac{s}{4} \theta$ and ${\rm Ric}_0(J\theta) = -\frac{s}{4} J\theta$;

\vspace{0.2cm}
(b) At all points where $s \neq 0$, ${\rm Span}(\theta, J\theta)$ and ${\rm Image} \, N $ are orthogonal. 

\vspace{0.2cm}

(c) ${\rm Ric}_0(X) = \frac{s}{4} X$, for any vector $ X \in {\rm Image} \, N$;

\vspace{0.2cm}

(d) At all points where $s = 0$, we have ${\rm Ric} = 0$ or both ($\theta = 0$ and $N=0$).

\vspace{0.2cm}

(e) We have $d \rho = 0$ and $d \Phi = 0$ everywhere on $M$. Moreover,  $\rho$ and $\Phi$ are collinear or one of them is zero. In particular, at points where $s\neq 0$, $\theta \neq 0$ and $N\neq 0$, 
$$\rho = \frac{s}{2} \; \omega|_{{\rm Image}\, N}  \; ,$$
$$ \Phi = \frac{1}{4} \left( |\theta|^2 - \frac{1}{4}|N|^2 \right) \omega|_{{\rm Image}\, N} = -\frac{1}{2} (\delta \theta)\; \omega|_{{\rm Image}\, N}\; .$$

\vspace{0.2cm}

(f) Relation $\rho^* \wedge \Phi = 0$ holds point-wise on $M^4$.
\end{prop}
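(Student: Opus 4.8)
The plan for part (f) is to read it off from part (e) once two simple facts are in place; the real content sits in (a)--(e), so (f) itself is short.

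The first fact I would establish is that, for a manifold of class $\mathcal{AH}_1$, the star-Ricci form equals the Ricci form: $\rho^* = \rho$. This follows from Proposition~\ref{ah1-locchar}(ii): the condition $(G_1)$ forces $W^+_2 = 0$, hence ${\rho^*}'' = 0$ by (\ref{w^+2}), and it forces $\kappa = s$, hence $s^* = s$ by the formula for $\kappa$ in (\ref{w^+1}). Since the Ricci tensor is $J$-invariant, its $J$-invariant trace is $s$, so the $U(2)$-decompositions of $\rho$ and $\rho^*$ both reduce, using the dimension-$4$ identity $(\rho^*)'_0 = \rho_0$, to $\rho = \rho^* = \frac{s}{4}\,\omega + \rho_0$. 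No compactness is needed, which is consistent with (f) being a local statement.

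The second fact is the pair: $\Phi \wedge \Phi = 0$ in dimension $4$ — already recorded after (\ref{Phi4dloc}), coming from the local expression $\Phi = a \wedge b$ — together with part (e), which asserts that at every point of $M^4$ either $\Phi = 0$ or $\rho$ and $\Phi$ are collinear. Putting these together gives $\rho \wedge \Phi = 0$ pointwise: at a point where $\Phi = 0$ this is immediate, and at a point where $\Phi \neq 0$ part (e) provides a scalar $\lambda$ with $\rho = \lambda\,\Phi$, so that $\rho \wedge \Phi = \lambda\,(\Phi \wedge \Phi) = 0$. Substituting $\rho^* = \rho$ from the first fact yields $\rho^* \wedge \Phi = 0$ everywhere on $M^4$, which is (f).

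I do not expect a genuine obstacle here beyond having part (e) available: the only point requiring care is that the collinearity assertion in (e) must be read pointwise, with $\lambda$ permitted to vanish (at points where $\rho = 0$) and left undefined precisely where $\Phi = 0$ — the two-way split above is arranged to accommodate exactly this. It is worth noting that the explicit formulas in (e) at points with $s\neq 0$, $\theta\neq 0$, $N\neq 0$, where both $\rho$ and $\Phi$ are multiples of $\omega|_{{\rm Image}\,N}$, give a completely transparent instance of this argument.
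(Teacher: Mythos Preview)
Your argument for (f) is correct and follows the same route as the paper: the paper's proof of (f) is the single line ``Part (f) is also obvious now, noting that $\rho^* = \rho$ under the $\mathcal{AH}_1$ assumption,'' which relies on exactly the two ingredients you spell out --- the identity $\rho^* = \rho$ (used earlier in the proof of (e)) and the pointwise collinearity of $\rho$ and $\Phi$ from (e), combined with $\Phi \wedge \Phi = 0$. Your version simply makes explicit the decomposability step $\rho \wedge \Phi = \lambda\,\Phi\wedge\Phi = 0$ that the paper leaves implicit.
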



\begin{proof}
For parts (a), (b), (c) we will use the self-dual Bianchi relation (\ref{ahbianchi+}), which, under the $\mathcal{AH}_1$ assumption becomes:
\begin{eqnarray} \label{bianchi+AH1}
 0 &=&  - \frac{s}{4} (J\theta)(Z) \, \omega - (J{\rm Ric}_0 (\theta))(Z) \, \omega \\ \nonumber
& & +\frac{s}{4} (\theta \wedge Z^{\flat} )'' - (\theta \wedge {\rm Ric}_0(Z))'' + \frac{1}{2} N_{{\rm Ric}_0 (Z)^\sharp} - \frac{s}{8} N_Z   \; .
\end{eqnarray}
The $\omega$ component of the relation above, immediately implies
${\rm Ric}_0 (J\theta) = - \frac{s}{4} J\theta$, hence also 
${\rm Ric}_0 (\theta) = - \frac{s}{4} \theta $. See also Lemma 4 in \cite{AD-QJM}, for part (a). Part (b) follows from the $J$-anti-invariant component of the Bianchi relation (\ref{bianchi+AH1}) by making $Z = \theta^{\sharp}$ and using part (a). Indeed, we obtain 
$$ - \frac{s}{8} N_{\theta^{\sharp}} = 0 \; ,$$ so claim (b) follows. At points where $\theta = 0$, part (c) follows directly from (\ref{bianchi+AH1}). At points where $\theta \neq 0$, part (c) follows from parts (a) and (b) and the fact that the trace-free Ricci tensor ${\rm Ric}_0$, being $J$-invariant, must have the double eigenvalue $\frac{s}{4}$, as it has the double eigenvalue $-\frac{s}{4}$ .

For part (d), suppose that there is a point $p$ where $s(p) = 0$, ${\rm Ric}_0(p) \neq 0 $ and $\theta(p) \neq 0$. Then the $J$-invariant 2-form $\rho_0$ has at each point at least a 2-dimensional kernel, as $\{ \theta(p), \; J\theta(p) \}$ are included in the kernel by part (a).
It follows that $\rho_0$ is decomposable at every point as $w \wedge Jw$, for some $w \in T^*M$. Thus, 
$$ 0 = \rho_0(p) \wedge \rho_0(p) = - |\rho_0 (p) |^2 \, \frac{\omega^2}{2} \; ,$$
where the second equal sign holds because $\rho_0$ is an anti-self-dual form. Thus $\rho_0 (p) = 0$, so ${\rm Ric}_0(p) = 0$, contradiction.
Similarly, a contradiction can be reached assuming the existence of a point $p$ such that $s(p) = 0$, ${\rm Ric}_0(p) \neq 0 $ and $N(p) \neq 0$, using part (c) in this case.

For the part (e), we first prove $d \rho = 0$.
Using part (a), we have $\iota_{\theta^\sharp} \rho_0 = -\frac{s}{4} J\theta$, so from Lemma \ref{RicformBianchi}, equation (\ref{deltarho0}), we get 
\begin{equation} \label{delrho0}
     \delta \left( \rho_0 - \frac{s}{4}  \omega \right)  = 0 \; . 
\end{equation} 
The Hodge star operator applied to this relation yields $d \rho = 0$. The claim $d \Phi = 0$ follows from (\ref{Chernform}), the fact that the first Chern form is closed, and the identity $\rho = \rho^*$, valid under the $\mathcal{AH}_1$ assumption. The expression for the Ricci form $\rho$
at points where $s\neq 0$, $\theta \neq 0$ and $N\neq 0$ follows from  parts (a) and (b) which imply that the trace-free Ricci tensor has the expression
$$ {\rm Ric}_0 = - \frac{s}{4} g|_{{\rm Span}(\theta, J\theta)} + \frac{s}{4} g|_{{\rm Image} \, N} \; .$$
The expression for $\Phi$ at points where $s\neq 0$, $\theta \neq 0$ and $N\neq 0$ follows from (\ref{Phi4d}) and the fact that $N_{J\theta} = 0$ by part (c). The claim that forms $\rho$ and $\Phi$ are collinear or one of them is $0$ at all points is now clear from the above expressions and part (d). Part (f) is also obvious now, noting that $\rho^* = \rho$ under the $\mathcal{AH}_1$ assumption.


  
\end{proof}

\noindent
As a consequence of the previous proposition, under the additional assumption that the metric is Einstein, we have a complete classification, even without compactness.

\begin{prop} \label{Einst-loc}
Let $(M^4, g, J, \omega)$ be an Einstein almost Hermitian 4-manifold of class $\mathcal{AH}_1$. Then either the scalar curvature is zero, in which case the metric is Ricci flat and ASD and $J$ is any metric compatible almost complex structure, or $(g, J, \omega)$ is a K\"ahler-Einstein structure.  
\end{prop}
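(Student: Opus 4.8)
The plan is to derive the result almost entirely from Proposition \ref{ah1-facts}(a) together with the local characterization of $(G_1)$ in Proposition \ref{ah1-locchar}(ii). First I would record the standard fact that an Einstein metric in dimension $4$ has constant scalar curvature (contracted second Bianchi identity / Schur's lemma), so ${\rm Ric}_0 \equiv 0$ and $s$ is a constant. Substituting ${\rm Ric}_0 = 0$ into the identity ${\rm Ric}_0(\theta) = -\frac{s}{4}\theta$ of Proposition \ref{ah1-facts}(a) then yields $\frac{s}{4}\,\theta = 0$; since $s$ is constant, this forces either $s \equiv 0$ or $\theta \equiv 0$, two mutually exclusive global alternatives.

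In the first case, $s \equiv 0$ together with ${\rm Ric}_0 = 0$ says the metric is Ricci flat, and it remains to see that it is anti-self-dual. Here I would invoke Proposition \ref{ah1-locchar}(ii): under $(G_1)$ we have $W_2^+ = W_3^+ = 0$ and $\kappa - s = 0$, hence $\kappa = 0$, and then (\ref{w^+1}) gives $W_1^+ = \frac{\kappa}{8}\,\omega\otimes\omega - \frac{\kappa}{12}\,{\rm Id}_{|\Lambda^+ M} = 0$; therefore $W^+ = 0$. The remark that $J$ may then be arbitrary among $g$-compatible almost complex structures is exactly the content of Remark \ref{basic-exp}, so nothing further is needed there.

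In the second case, $\theta \equiv 0$ gives $d\omega = \theta\wedge\omega = 0$, so the structure is almost Kähler. Combining $(G_1)$ with (\ref{kappa-s}): the latter reads $s^* - s = \frac{1}{4}|N|^2$ when $\theta = 0$, while the former forces $s^* - s = \frac{2}{3}(\kappa - s) = 0$; hence $N \equiv 0$, $J$ is integrable, and $(g, J, \omega)$ is Kähler — and, being Einstein, Kähler-Einstein. This is precisely the local equality $\mathcal{AK}_1 = \mathcal{K}$ recorded in Remark \ref{AK1orH1=K}, which I would cite directly.

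I do not anticipate a genuine obstacle: the proposition is in effect a corollary of Proposition \ref{ah1-facts}. The only thing requiring a little care is the dichotomy between the two cases — this is where constancy of $s$ is used, so that the loci $\{s=0\}$ and $\{\theta=0\}$ cover $M$ in a clean, non-interleaved way — and the observation that no compactness hypothesis is needed, since in the almost Kähler branch the $\mathcal{AH}_1$ condition annihilates $|N|^2$ pointwise rather than only after integration.
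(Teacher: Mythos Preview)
Your proof is correct and follows essentially the same route as the paper's. The only cosmetic difference is in the $s\neq 0$ branch: the paper obtains $N=0$ directly from Proposition~\ref{ah1-facts}(c) (since ${\rm Ric}_0=0$ and $s\neq 0$ force ${\rm Image}\,N=\{0\}$), whereas you first get $\theta=0$ and then kill $N$ via (\ref{kappa-s}); both are immediate.
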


\begin{proof}
If $s = 0$, then $\kappa = 0$, so $W^+ =0$ using Proposition \ref{ah1-locchar} (ii). As we have already observed in Remark \ref{basic-exp}, for a Ricci flat, ASD metric, any compatible almost complex structure $J$ gives rise to an $\mathcal{AH}_1$ structure. Suppose next that $s \neq 0$. As the metric is assumed Einstein, ${\rm Ric}_0 = 0$, so parts (a) and (c) of Proposition \ref{ah1-facts} imply that $\theta = 0$ and $N=0$. This can also be seen directly from the relation (\ref{bianchi+AH1}). Thus, the structure must be K\"ahler in this case.    
\end{proof}
The proof of Theorem \ref{Eah1} stated in the introduction now follows immediately.

\vspace{0.2cm}
\noindent
{\it Proof of Theorem \ref{Eah1}:} From the assumptions and Proposition \ref{ah1}, the manifold is Einstein and of class $\mathcal{AH}_1$. Now Proposition \ref{Einst-loc} applies, so the manifold is K\"ahler-Einstein if $s\neq 0$. If $s=0$, the metric is Ricci flat and ASD, and the conclusion follows from Hitchin's classification of compact Einstein ASD 4-manifolds with $s\geq 0$, as explained in Remark \ref{basic-exp}. $\Box$

\vspace{0.2cm}
\noindent Regarding the non-K\"ahler cases of Theorem \ref{Eah1} on $\mathbb{T}^4$ and its quotients, the metric is flat (see also the Lie algebra example in Section 6). However, note that in the case of hyperelliptic surfaces, only the metric and one K\"ahler structure descend to the quotient, not the entire hyperK\"ahler structure from $\mathbb{T}^4$. The same phenomenon holds for the Enriques surfaces obtained from quotients of $K3$-surfaces.

\vspace{0.2cm}
\noindent
For the next result, we will use compactness and an assumption on the scalar curvature.

\begin{prop} \label{ah1-Gaud}
    Let $(M^4, g, J, \omega)$ be a compact almost Hermitian 4-manifold of class $\mathcal{AH}_1$. Assume also that $s\neq 0$ everywhere on $M$. Then $\delta \theta = 0$. 
\end{prop}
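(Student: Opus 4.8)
The plan is to reduce the statement to the vanishing of the closed $2$-form $\Phi$, and then to extract that from an integration by parts fed by the differential Bianchi identity. Under $(G_1)$ one has $\rho^*=\rho$, and by Proposition~\ref{ah1-facts}(e) both $\gamma=\rho+\Phi$ and $\rho$ are closed, hence so is $\Phi$; also $s^*=s$, so (\ref{Phi-comp}) and (\ref{kappa-s}) give $\delta\theta=-2\langle\Phi,\omega\rangle$. Since $s\neq0$ forces $\Phi''=-\frac14 N_{J\theta^\sharp}=0$ (Proposition~\ref{ah1-facts}(b)) while $\Phi\wedge\Phi=0$, one gets $|\Phi|^2=\langle\Phi,\omega\rangle^2$, so it suffices to prove $\Phi\equiv0$. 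On the open set $U=\{\theta\neq0\}$, writing $\omega=\omega_1+\omega_2$ for the splitting of $\omega$ along ${\rm Span}(\theta,J\theta)$ and its orthogonal complement (so $\omega_2=\omega-|\theta|^{-2}\,\theta\wedge J\theta$, which on $U\cap\{N\neq0\}$ is the form $\omega|_{{\rm Image}\,N}$ of Proposition~\ref{ah1-facts}(e)), Proposition~\ref{ah1-facts} and (\ref{Phi4d}) give $\rho=\frac s2\,\omega_2$ and $\Phi=-\frac12(\delta\theta)\,\omega_2$. In particular $|\rho|^2\geq s^2/8>0$ everywhere, so $\rho$ is a nowhere-zero closed and decomposable $2$-form; its kernel ${\rm Span}(\theta,J\theta)$ is therefore integrable, and from $d\Phi=d\rho=0$ one gets $\Phi=u\rho$ with $u=-\delta\theta/s$ and $du\wedge\rho=0$, i.e.\ $u$ is constant along the leaves.

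Next, since $\Phi$ and $\rho$ are closed and $M$ is compact, Stokes' theorem gives $\int_M\Phi\wedge dJ\theta=\int_M d(\Phi\wedge J\theta)=0$ and likewise $\int_M\rho\wedge dJ\theta=0$. Expanding $dJ\theta$ by (\ref{dJtheta}) (with $N_{J\theta^\sharp}=0$) and wedging against $\frac s2\omega_2$, respectively $-\frac12(\delta\theta)\omega_2$, the terms $\theta\wedge J\theta$ and $|\theta|^2\omega$ contribute only through their $\omega_2$-components, which cancel, so one is left with $\int_M s\,\langle\omega_1,\alpha\rangle\,dV_g=0$ and $\int_M(\delta\theta)\,\langle\omega_1,\alpha\rangle\,dV_g=0$. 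Since $\langle\omega,\alpha\rangle=\delta\theta$, we have $\langle\omega_1,\alpha\rangle=\delta\theta-\langle\omega_2,\alpha\rangle$, and a short computation (using $\theta^\sharp\perp{\rm Span}(\theta,J\theta)^\perp$) identifies $\langle\omega_2,\alpha\rangle$ with $g(\theta^\sharp,H)$, where $H$ is the mean-curvature vector of the distribution ${\rm Span}(\theta,J\theta)^\perp$. The second integral identity then reads $\int_M(\delta\theta)^2\,dV_g=\int_M(\delta\theta)\,g(\theta^\sharp,H)\,dV_g$.

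The crux — and the step I expect to be the main obstacle — is to control the right-hand side: by itself this identity, together with (\ref{kappa-s}) and the Weitzenb\"ock formula (\ref{Weitz-om}) (which under $(G_1)$ reduces to $\nabla^*\nabla\omega=(|\theta|^2+\delta\theta)\,\omega+2\alpha''$, since $W^+(\omega)=\frac s6\omega$ by (\ref{w^+1})), merely reproduces (\ref{kappa-s}). One must therefore bring in the anti-self-dual Bianchi identity (\ref{ahbianchi-}), which under $(G_1)$ has $A^-_Z=0$ (as ${\rm Ric}_0''=0$) and hence expresses $\nabla_{JZ}\rho_0$ and $\delta W^-$ in terms of $s$, $\theta$ and $\rho_0=\frac s2(\omega_2)_0$; differentiating the distribution ${\rm Span}(\theta,J\theta)^\perp$ through $\rho$ and $\rho_0$ and using the leafwise constancy $du\wedge\rho=0$, one should obtain $\int_M(\delta\theta)\,g(\theta^\sharp,H)\,dV_g\leq0$, whence $\int_M(\delta\theta)^2\,dV_g\leq0$ and $\delta\theta\equiv0$ on $U$. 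The equality propagates to all of $M$ by continuity, since on the interior of $\{\theta=0\}$ the structure is K\"ahler (Remark~\ref{AK1orH1=K}), where $\delta\theta=0$ trivially. I also expect the careful treatment of the loci $\{\theta=0\}$ and $\{N=0\}$ to demand some attention.
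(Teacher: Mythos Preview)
Your first paragraph is essentially the paper's own setup: you correctly obtain $\Phi=u\rho$ with $u=-\delta\theta/s$ (globally, since $s\neq0$), and from $d\Phi=d\rho=0$ you get $du\wedge\rho=0$. You even record that $\theta^\sharp$ lies in the kernel of $\rho$. At this point the proof is one line away from being finished, but you miss that line and instead launch an elaborate program in paragraphs two and three that you yourself acknowledge is incomplete (``one should obtain\dots'', ``I expect\dots'').

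The missing step is simply to contract $du\wedge\rho=0$ with $\theta^\sharp$. Since $\iota_{\theta^\sharp}\rho=0$ (this is exactly Proposition~\ref{ah1-facts}(a): $\mathrm{Ric}(J\theta^\sharp)=\mathrm{Ric}_0(J\theta^\sharp)+\tfrac s4 J\theta^\sharp=0$), the contraction gives $\big(\theta^\sharp(u)\big)\rho=0$, and as $\rho$ is nowhere zero you get $\langle du,\theta\rangle=0$ pointwise on $M$. Integrating by parts,
\[
0=\int_M\langle du,\theta\rangle\,dV_g=\int_M u\,\delta\theta\,dV_g=-\int_M\frac{(\delta\theta)^2}{s}\,dV_g,
\]
and since $s$ has a fixed sign on the compact manifold, $\delta\theta\equiv0$. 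No mean-curvature computations, no appeal to (\ref{ahbianchi-}), no Weitzenb\"ock formula, and no separate treatment of the zero loci are needed. Your detour through $\int_M\Phi\wedge dJ\theta$ and the geometry of the foliation is not wrong in spirit, but the sought-for inequality $\int_M(\delta\theta)\,g(\theta^\sharp,H)\,dV_g\le 0$ is never established, so as written the argument has a genuine gap.
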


\begin{proof}
   Assume that $\theta$ does not vanish identically, as otherwise there is nothing to prove. From parts (d) and (e) of Proposition \ref{ah1-facts}, we have
$$ s \, \Phi + (\delta \theta) \, \rho = 0 \; ,$$    
everywhere on $M$. Divide by $s$ and take the differential, using that $\Phi$ and $\rho$ are both closed, to obtain
%
%
%
%
$$ d \left( \frac{\delta \theta}{s} \right) \wedge \rho = 0\; . $$
Next, take the interior product $\iota_{\theta^{\sharp}}$ of this relation and note that $\iota_{\theta^{\sharp}} \rho  =0$ by part (a) of Proposition \ref{ah1-facts}
We get 
$$ \iota_{\theta^{\sharp}} d \left( \frac{\delta \theta}{s} \right) \,\rho = 0\; ,$$
and, as $\rho \neq 0$ (because of Proposition \ref{ah1-facts} (c) and the assumption $s \neq 0$), we get 
$$ \left( d \left( \frac{\delta \theta}{s} \right), \theta \right)_g = 0 $$
pointwise on $M$. Integrating this, one obtains
$$\int_M \frac{(\delta \theta)^2}{s} \; d\mu = 0 \; . $$
The conclusion follows, using again the assumption $s \neq 0$ on $M$.
\end{proof}

\noindent The next proposition is a computation on the basic examples from Remark \ref{basic-exp} which shows that at least some assumption on $s$ is needed in order for $\delta \theta $ to vanish on an $\mathcal{AH}_1$ 4-manifold.


\begin{prop} \label{basic-exp-comp}
Let $(M^4, g)$ be a Ricci-flat ASD 4-manifold and let us fix a local hyperK\"ahler structure $(g, J_1, J_2, J_3, \omega_1, \omega_2, \omega_3)$ (it is known that $g$ locally admits such structures).
 For any almost Hermitian structure $(g,J,\omega)$ compatible with the metric $g$, there are (locally defined) functions $f_1, f_2, f_3$ with $f_1^2 + f_2^2 + f_3^2 = 1$ so that $\omega = f_1 \, \omega_1 + f_2 \, \omega_2 + f_3 \,  \omega_3$. In terms of the functions $f_1, f_2, f_3$, some of the invariants of the almost Hermitian structure $(g,J,\omega)$ are given by:
\begin{equation} \label{theta-T4-exp}
\theta = J_3(f_2 df_1 - f_1 df_2) + J_1 (f_3 df_2 - f_2 df_3) +J_2(f_1df_3 -f_3 df_1) 
\end{equation}
\begin{eqnarray} \label{N-T4-exp}
N^J &=& (f_2 df_3 - f_3 df_2 +J(df_1)) \otimes \omega_1 + (f_3 df_1 - f_1 df_3 +J(df_2)) \otimes \omega_2 + \\ \nonumber & & \; \; \; \; \; \; \; \; \; \; \; + (f_1 df_2 - f_2 df_1 +J(df_3)) \otimes \omega_3
\end{eqnarray}
\begin{equation} \label{deltheta-T4-exp}
\delta \theta = -2\big( \, \langle df_2, J_3 df_1 \rangle + \langle df_3, J_1 df_2 \rangle + \langle df_1, J_2 df_3 \rangle \big)
\end{equation}
\end{prop}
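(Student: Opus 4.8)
The plan is to derive all three formulas from the single fact that $\omega_1,\omega_2,\omega_3$ are $\nabla$-parallel (hence closed) and form a pointwise-orthogonal frame of $\Lambda^+M$ with $|\omega_i|^2=2$. Since $\omega$ is a $g$-compatible fundamental $2$-form inducing the same orientation as the $\omega_i$, it is a self-dual $2$-form of pointwise norm $\sqrt2$, so $\omega=f_1\omega_1+f_2\omega_2+f_3\omega_3$ with $f_i=\tfrac12\langle\omega,\omega_i\rangle$ smooth and $f_1^2+f_2^2+f_3^2=\tfrac12|\omega|^2=1$; comparing $\omega(\cdot,\cdot)=g(J\cdot,\cdot)$ with $\omega_i(\cdot,\cdot)=g(J_i\cdot,\cdot)$ forces $J=\sum_i f_iJ_i$, which squares to $-\mathrm{Id}$ precisely because $\sum f_i^2=1$ and $J_iJ_j=\epsilon_{ijk}J_k$. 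Differentiating $\sum f_i^2=1$ gives the identity $\sum_i f_i\,df_i=0$, which I will use repeatedly to discard ``diagonal'' terms.

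For $\theta$ I would use $\theta=J\delta\omega$. Since $\star\omega=\omega$ and each $\omega_i$ is closed, $\delta\omega=-\star d\star\omega=-\star d\omega=-\sum_i\star(df_i\wedge\omega_i)$, and the elementary $4$-dimensional identity $\star(\alpha\wedge\omega_i)=J_i\alpha$ (valid for any $1$-form $\alpha$ since $\omega_i$ is self-dual) gives $\delta\omega=-\sum_iJ_i\,df_i$, hence $\theta=-\sum_iJJ_i\,df_i$. Expanding $JJ_i=(\sum_kf_kJ_k)J_i$ with the quaternionic multiplication table, the identity part collects to $-\sum_i f_i\,df_i=0$ and the remainder rearranges into (\ref{theta-T4-exp}). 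For $\delta\theta$ I would then write $\theta$ as a sum of six terms $J_i(h\,d\ell)$ with $h,\ell\in\{f_1,f_2,f_3\}$ and apply $\delta(h\,J_id\ell)=h\,\delta(J_id\ell)-\langle dh,J_id\ell\rangle$; here $\delta(J_id\ell)=0$ because $J_i$ is parallel and skew-symmetric while $\mathrm{Hess}\,\ell$ is symmetric, so the trace of $J_i\circ\mathrm{Hess}\,\ell$ vanishes. The six surviving inner products $\langle df_a,J_b\,df_c\rangle$ pair up under the skew-adjointness of $J_i$ on $1$-forms, producing the overall factor $-2$ of (\ref{deltheta-T4-exp}).

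For $N^J$ I would start from (\ref{nablaom-dim4}) rearranged as $N_{JX}=2\nabla_X\omega-X^\flat\wedge J\theta-JX^\flat\wedge\theta$, with $\nabla_X\omega=\sum_i df_i(X)\,\omega_i$. Now $\nabla_X\omega$ is always $J$-anti-invariant and so is $N_{JX}$, hence so is the bracketed term; all three are sections of $\leftr\Lambda^{0,2}M\rightr=(\mathbb{R}\,\omega)^\perp\cap\Lambda^+M=\mathrm{span}\{\omega_i-f_i\omega\}$, so I can extract their $\omega_i$-components using $\langle\alpha\wedge\beta,\omega_i\rangle=\omega_i(\alpha^\sharp,\beta^\sharp)$. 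This turns the $\omega_i$-component of $X^\flat\wedge J\theta+JX^\flat\wedge\theta$ into $g(X,[J,J_i]\theta^\sharp)$; expanding $[J,J_i]=\sum_{k\neq i}f_k[J_k,J_i]$ with the quaternionic relations, substituting the formula for $\theta$, and re-expressing $N_{JX}$ in terms of $N_\cdot$ (replacing $X$ by $-JV$) yields (\ref{N-T4-exp}). Equivalently, one may compute $N$ directly from the torsion-free identity $N(X,Y)=(\nabla_{JX}J)Y-(\nabla_{JY}J)X+J(\nabla_YJ)X-J(\nabla_XJ)Y$ using $\nabla_VJ=\sum_i df_i(V)J_i$.

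I expect the only real obstacle to be computational bookkeeping: keeping the quaternionic products $J_iJ_j=\epsilon_{ijk}J_k$ and the sign conventions (the action $(J\alpha)(X)=-\alpha(JX)$, the orientation fixing $\Lambda^+M$, the sign of $\delta$, and the normalization $|\omega|^2=2$) consistent throughout, and systematically invoking $\sum_i f_i\,df_i=0$ to reduce the many intermediate terms to the compact forms stated. Once the identity $\star(\alpha\wedge\omega_i)=J_i\alpha$ and the projection onto $\leftr\Lambda^{0,2}M\rightr$ are in place, there is no conceptual difficulty.
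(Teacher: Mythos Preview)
Your proposal is correct and follows essentially the same route as the paper: both use the parallelism of the $\omega_i$ to get $\nabla\omega=\sum_i df_i\otimes\omega_i$ and hence $\delta\omega=-\sum_i J_i\,df_i$, then expand $\theta=J\delta\omega$ via the quaternionic relations, and both kill the Hessian terms in $\delta\theta$ by the symmetry of $\mathrm{Hess}\,f_i$ against the skew $J_k$. For $N^J$ the paper's argument is slightly more streamlined than your primary approach---it first extracts from (\ref{nablaom-dim4}) the identity $N_X=J(\nabla_X\omega)-\nabla_{JX}\omega$ and then substitutes $\nabla\omega=\sum_i df_i\otimes\omega_i$ directly---which avoids the component extraction you outline, though your alternative via $\nabla_V J=\sum_i df_i(V)J_i$ is equivalent.
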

\begin{proof} Using the assumption that $(g, J_1, J_2, J_3, \omega_1, \omega_2, \omega_3)$ is hyperK\"ahler 
\begin{equation} \label{naom-T4-exp}
    \nabla \omega = df_1 \otimes \omega_1 + df_2 \otimes \omega_2 + df_3 \otimes \omega_3 \; .
\end{equation} 
From this
$$ \delta \omega = -J_1 df_1 - J_2 df_2  -J_3 df_3 \; ,$$
and as $\theta = J \delta \omega$, where $J = f_1 J_1 + f_2 J_2 + f_3 J_3$, the formula (\ref{theta-T4-exp}) follows. 
Differentiating (\ref{theta-T4-exp}) and using one more time that all $J_i$ are parallel, we get:
$$ (\nabla_X \theta)(Y) = - df_2(X) \, df_1(J_3 Y) - f_2 (\nabla^2_{X J_3 Y} f_1) +  df_1(X) \, df_2 (J_3 Y) + f_1 (\nabla^2_{X J_3 Y} f_2) +$$ 
$$ \hspace{1cm} + {csum (1, 2, 3)} \; ,$$
where $csum(1, 2, 3)$ denotes the cyclic sum of the previous line. Now formula (\ref{deltheta-T4-exp}) follows noting that when taking trace in $X,Y$ the second and fourth terms on the first line vanish as the Hessian of $f_1$ (or of $f_2$) is symmetric, so its inner product with $\omega_3$ is zero. The expression (\ref{N-T4-exp}) for the Nijenhuis tensor is obtained from the formula (\ref{naom-T4-exp}) for $\nabla \omega$ and the relation (\ref{nablaom-dim4}). Indeed, from (\ref{nablaom-dim4}), one immediately obtains
$$N_X = J(\nabla_X \omega) - (\nabla_{JX} \omega) \; ,$$
and now one just uses (\ref{naom-T4-exp}).
\end{proof}

Note that if one of the functions $f_1, f_2, f_3$ is a constant, it follows that $\delta \theta = 0$, but $\delta \theta \neq 0$ is the generic case when all three functions are non-constant.

\vspace{0.3cm}

For the rest of this section, assume that $s\neq 0$ everywhere on $M$, so that Proposition \ref{ah1-Gaud} holds, hence, we have that the $\mathcal{AH}_1$ structure $(g, J, \omega)$ is Gauduchon, that is, $\delta \theta = 0$. Denote by $\mathcal{U}$ the set of points where $N\neq 0$. Note that on $\mathcal{U}$, $\theta \neq 0$, while on the complement of $\mathcal{U}$, both $N$ and $\theta$ vanish. This is true because of relation (\ref{kappa-s}), the $\mathcal{AH}_1$ assumption, and $\delta \theta = 0$. The following definitions make sense on the set $\mathcal{U}$, but we will be able to obtain some global results on $M$.

%
%

On the set $\mathcal{U}$, define the unit vector $ T = \theta^{\sharp}/ |\theta|$. From the Proposition \ref{ah1-facts} (c), ${\rm Span}(T, JT)$ is orthogonal to ${\rm Image}~N $. The endomorphism $L : {\rm Image } \; N \rightarrow {\rm Image } \; N $ defined by $L(U) = N(T, U)$ is symmetric (with respect to the inner product induced on ${\rm Image} \; N $ by the metric) and has zero trace. Let $V$ be a unit eigenvector of $L$, corresponding to the positive eigenvalue $\lambda$. Automatically, $-JV$ is an eigenvector of $L$, with eigenvalue $-\lambda$. With respect to the basis $\{ T, JT, V, JV \}$, the Nijenhuis tensor has the form
\begin{equation} \label{N-specframe}
 N = \lambda V \otimes ( T \wedge V - JT \wedge JV ) - \lambda JV \otimes ( JT \wedge V + T \wedge JV ) \; .
\end{equation}
Note that 
\begin{equation} \label{lambsq}
    \lambda^2 = \frac{1}{4} |N|^2 = |\theta|^2 \; .
\end{equation}
where for the last equality, we used $\delta \theta = 0$, obtained in Proposition \ref{ah1-Gaud}.

Consider the frame $\{ \psi, J\psi \}$ of $\leftr \La^{0,2}M \rightr$ which is ``aligned'' with the Nijenhius tensor; that is, let 
\begin{equation} \label{psiJpsi}
\psi = T \wedge V - JT \wedge JV \; , J\psi = JT \wedge V + T \wedge JV \; .
\end{equation}
Let $\tilde{a}, \tilde{b}, \tilde{c} $ and $\tilde{n}$ be the 1-forms corresponding to relations (\ref{abc}) and (\ref{NphiJphi}) with respect to the frame $\{ \psi, J\psi \}$. From (\ref{ab-thetan}), we directly get
$$ \tilde{a} - J \tilde{b} = J\psi(\theta) = |\theta| JV \; , \; \; \tilde{a} + J \tilde{b} =  \tilde{n} = - \lambda JV \; .  $$
From these, we easily get
\begin{equation} \label{tatb}
\tilde{a} = \frac{1}{2} (|\theta| - \lambda) JV \; , \; \; \tilde{b} = - \frac{1}{2} (|\theta| + \lambda) V \; , \; \; \tilde{n} = - \lambda JV \; .   
\end{equation}
By (\ref{lambsq}) and as $\lambda >0$, under our ${\mathcal AH}_1$ conclusions so far, including $\delta \theta =0$ from Proposition \ref{ah1-Gaud}, we get
\begin{equation} \label{tatbah1}
\tilde{a} = 0 \; , \; \; \tilde{b} = - |\theta| \, V \; .   
\end{equation}
Relations (iii) of Proposition \ref{ah1-locchar}  become
$$ 0 = \tilde{c} \wedge \tilde{b} \; , \; \; d \tilde{b} = 0 \; . $$
The first of these implies $\tilde{c} = \mu \, \tilde{b} $, for some function $\mu$.
But note that 
$$- d \tilde{c}= R(\omega) +  \tilde{a} \wedge \tilde{b}$$
is the Chern form $\gamma$. As we already know that $0 =\Phi = \tilde{a} \wedge \tilde{b}$, it follows that
$$ \rho = \rho^* = -d(\mu \, \tilde{b} ) = - d \mu \wedge \tilde{b} \; ,$$
where for the first equal sign we used the ${\mathcal AH}_1$ assumption, while for the last equal sign we used $d \tilde{b} = 0$ (again a consequence of ${\mathcal AH}_1$). Thus, our future goal is to prove that $\mu$ is a constant.

In order to do this, we exploit $ d \tilde{b} = 0$. Note that $\tilde{b} = - \psi(\theta) $, so taking the covariant derivative of this, one has
\begin{align*}
    (\nabla_A \tilde{b})(B) &= - (\nabla_A \psi)(\theta)(B) - \psi (\nabla_A \theta)(B) \\
                            &=  - (\tilde{c} (A) J\psi )(\theta)(B) + (\nabla_A \theta)(\psi B) \\
                            &= - \tilde{c} (A)  |\theta| JV (B) + (\nabla_A \theta)(\psi B) \\
                            &= \tilde{c} (A) J \tilde{b}(B) + (\nabla_A \theta)(\psi B) \; .
\end{align*}
Skew-symmetrizing in $A, B$, we thus get:
$$ 0 = d \tilde{b}(A,B) = (\tilde{c} \wedge J \tilde{b}) (A,B) + (\nabla_A \theta)(\psi B) - (\nabla_B \theta)(\psi A) \; ,$$
and using that $\tilde{c} = \mu \; \tilde{b}$, the above is rewritten as:
\begin{equation} \label{nathetapsi1}
    (\nabla_A \theta)(\psi B) - (\nabla_B \theta)(\psi A) = - \mu \, (\tilde{b} \wedge J\tilde{b})(A,B) \; .
\end{equation}
Replacing $A$ by $\psi A$ and $B$ by $\psi B$, we obtain
\begin{equation} \label{nathetapsi2}
   - (\nabla_{\psi A} \theta)(B) + (\nabla_{\psi B} \theta)(A) =  \mu \, (\psi \tilde{b} \wedge J\psi\tilde{b})(A,B) \; .
\end{equation}
Subtracting relations (\ref{nathetapsi1}) and (\ref{nathetapsi2}), and using that $\tilde{b} = - |\theta| V$ and the specific definition of the gauge $\psi$, we get
$$ 2 (d\theta)^{\psi - anti}(\psi A, B) = - \mu |\theta|^2 \omega(A, B) \; .$$
This is easily seen as equivalent with
$$ 2 (d\theta)^{\psi - anti} = \frac{\mu |\theta|^2}{2} J\psi \; . $$
The $\omega$-component of the above (note that $\omega$ is $\psi$-anti-invariant) yields the known fact that $\langle d\theta ,\omega \rangle = 0$, but the $J\psi$-component captures the new information 
\begin{equation} \label{dthetaJpsi}
    \langle d\theta , J\psi \rangle = \mu |\theta|^2  \; .
\end{equation}
We get the following partial result:
\begin{prop}
    Let $(M^4, g, J, \omega)$ be a compact almost Hermitian 4-manifold of class $\mathcal{AH}_1$. Assume also that $d \theta = 0$ (i.e. that the structure is locally conformally almost K\"ahler) and that $s\neq 0$ everywhere on $M$. Then $(M^4, g, J, \omega)$ must be a K\"ahler surface.
\end{prop}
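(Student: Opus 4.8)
The plan is to localize on the open set $\mathcal{U}\subset M$ where $N\neq 0$ and to push the preparatory computation leading to (\ref{dthetaJpsi}) to its conclusion. First, since $s\neq 0$ everywhere, Proposition \ref{ah1-Gaud} gives $\delta\theta=0$; combined with the hypothesis $d\theta=0$ this makes the Lee form $\theta$ harmonic. Recall that on $\mathcal{U}$ we have $\theta\neq 0$, while on $M\setminus\mathcal{U}$ both $N$ and $\theta$ vanish. On $\mathcal{U}$ the aligned gauge $\{\psi,J\psi\}$ of (\ref{psiJpsi}) and the associated $1$-forms $\tilde a,\tilde b,\tilde c$ are defined, and, exactly as in the discussion preceding the statement, $\tilde a=0$, $\tilde b=-|\theta|\,V$, $d\tilde b=0$, $\tilde c=\mu\,\tilde b$ for a smooth function $\mu$, and $\rho=\rho^*=-d\mu\wedge\tilde b$.

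The key step is to substitute $d\theta=0$ into the identity $\langle d\theta,J\psi\rangle=\mu|\theta|^2$ of (\ref{dthetaJpsi}). This forces $\mu|\theta|^2=0$ on $\mathcal{U}$, and since $|\theta|^2>0$ there, we obtain $\mu\equiv 0$ on $\mathcal{U}$, hence $\rho=\rho^*=-d\mu\wedge\tilde b=0$ on $\mathcal{U}$.

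On the other hand, by Proposition \ref{ah1-facts}(e), at each point of $\mathcal{U}$ (where $s\neq 0$, $\theta\neq 0$, $N\neq 0$) we have $\rho=\frac{s}{2}\,\omega|_{{\rm Image}\,N}$, and $\omega$ restricts to a nonvanishing $2$-form on the $J$-invariant $2$-plane ${\rm Image}\,N$, so $\rho\neq 0$ on $\mathcal{U}$ --- a contradiction. Therefore $\mathcal{U}=\emptyset$, i.e. $N\equiv 0$ and then $\theta\equiv 0$ on all of $M$. Consequently $d\omega=\theta\wedge\omega=0$ and $J$ is integrable, so $(M^4,g,J,\omega)$ is K\"ahler; alternatively, once $N\equiv 0$ one may simply invoke the compact-case equality $\mathcal{H}_1=\mathcal{K}$ of Remark \ref{AK1orH1=K} (or Corollary \ref{herm-case}).

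Since all the analytic input --- Proposition \ref{ah1-Gaud}, the explicit form of $\rho$ on $\mathcal{U}$ from Proposition \ref{ah1-facts}(e), and the identity (\ref{dthetaJpsi}) --- is already in place, there is no substantial obstacle to overcome. The one point requiring care is that the gauge $\{\psi,J\psi\}$ and the function $\mu$ live only on $\mathcal{U}$, so the vanishing of $\rho$ there must be organized as a contradiction that forces $\mathcal{U}$ to be empty, rather than as a mere assertion about the zero set of $\rho$.
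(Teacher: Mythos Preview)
Your proof is correct and follows essentially the same approach as the paper: both use the identity (\ref{dthetaJpsi}) with $d\theta=0$ to force $\mu=0$ on $\mathcal{U}$, hence $\rho=0$ there, which contradicts $s\neq 0$. The paper phrases the contradiction as ``$s=0$ on $\mathcal{U}$'' directly, while you obtain it via the explicit formula $\rho=\frac{s}{2}\,\omega|_{{\rm Image}\,N}$ from Proposition \ref{ah1-facts}(e); these are the same observation.
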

\begin{proof}
    By contradiction, suppose that the manifold is not K\"ahler. Then the open set $\mathcal{U}$, where both $N$ and $\theta$ do not vanish is non-empty. On the other hand, the assumption $d \theta = 0$ and the relation (\ref{dthetaJpsi}) imply that $\mu = 0$ on $\mathcal{U}$. This further implies $\tilde{c} = 0$, so $\rho = d \tilde{c} = 0$ on $\mathcal{U}$. In particular, $ s = 0$ on $\mathcal{U}$, but this contradicts the assumption that $s \neq 0$ on $M$.
\end{proof}

\subsection{$\mathcal{H}_1$-surfaces} Suppose $(M^4, g, J, \omega)$ is a Hermitian surface satisfying the condition $(G_1)$. As noted already, in the compact case, we know that $\mathcal{H}_1 = \mathcal{K} $. We are next interested in the local aspect of the problem and the Lie algebra case. 

\vspace{0.2cm}

\noindent
For a Hermitian surface, the condition (\ref{invAH1-v2}) can be rewritten as
\begin{equation} \label{H1}
0 = Y \wedge JP(X) + JY \wedge P(X) - X \wedge JP(Y) - JX \wedge P(Y) \; ,  
\end{equation}
where
$$P(X) = \frac{1}{2} \theta(X) \theta + \nabla_X \theta - \frac{1}{4} |\theta|^2 X \; .$$
Alternatively, condition (\ref{H1}) can be seen to be equivalent with
$$\langle P(X) , \phi(Y) \rangle - \langle P(Y) , \phi(X) \rangle = 0 \; ,$$
for any gauge $\phi$, or further, with
\begin{equation} \label{H1phi}
(\nabla_X \theta)(\phi Y) - (\nabla_Y \theta)(\phi X) = \frac{1}{2} (\theta \wedge \phi(\theta) )(X,Y) - \frac{1}{2} |\theta|^2 \phi(X,Y) \; ,
\end{equation}
for any gauge $\phi$. For an orthonormal basis $\{e_i \}$, taking $X=e_i$ and $Y = \phi(e_i)$ and summing in the above relation, we get
$$2\delta \theta = - |\theta|^2 \; ,$$
that is, the relation (\ref{kappa-s}) for $\mathcal{H}_1$ 4-manifolds.
In (\ref{H1}), replacing $X,Y$ by $\phi(X), \phi(Y)$ and subtracting the two relations, we get $\langle d \theta , J\phi \rangle = 0$, for any gauge $\phi$. As $\langle d\theta \, , \, \omega \rangle = 0$ always holds, this shows that relation (\ref{H1}) implies that $d \theta \in \Lambda^{-}M$. On the other hand, if $\alpha \in \Lambda^{-} M$, we have 
$$ \langle \alpha(X) , \phi(Y) \rangle  - \langle \alpha(X) , \phi(Y) \rangle = 0 \; ,  $$
because self-dual and anti-self-dual forms commute as endomorphisms of $TM$.
Therefore, relation (\ref{H1}) is equivalent to satisfying all three of the following
\begin{equation} \label{H11and2}
   2\delta \theta = - |\theta|^2 \; , \; \; d \theta \in \Lambda^{-}M \; , \mbox{ and } 
\end{equation}
\begin{equation} \label{H1phisym}
(\nabla \theta)_0^{sym}(X, \phi Y) - (\nabla \theta)_0^{sym}(Y,\phi X) = \frac{1}{2} (\theta \wedge \phi(\theta) )(X,Y) - \frac{1}{4} |\theta|^2 \phi(X,Y) \; ,
\end{equation}
with (\ref{H1phisym}) valid for any gauge $\phi$. Replacing $X$ by $\phi(X)$ in (\ref{H1phisym}), and eliminating the arguments $X,Y$ we get
\begin{equation} \label{H1phisym2} \nonumber
2\big( (\nabla \theta)_0^{sym} \big)^{\phi-inv} = \frac{1}{4} |\theta|^2 g - \frac{1}{2} \big( \theta \otimes \theta + \phi(\theta) \otimes \phi(\theta) \big) \; ,
\end{equation}
for any gauge $\phi$. Next, replace the gauge $\phi$ by $J\phi$ in the above relation to also get
\begin{equation} \label{H1Jphi-inv} \nonumber
2\big( (\nabla \theta)_0^{sym} \big)^{J\phi-inv} = \frac{1}{4} |\theta|^2 g - \frac{1}{2} \big( \theta \otimes \theta + J\phi(\theta) \otimes J\phi(\theta) \big) \; .
\end{equation}
Subtracting the last two relations, we get an expression for the $J$-anti-invariant part of $(\nabla \theta)_0^{sym}$, which is independent of $\phi$
\begin{equation} \label{H1J-anti}
\big( (\nabla \theta)^{sym} \big) '' = - \frac{1}{4} \big( \theta \otimes \theta - J\theta \otimes J \theta \big) \; .
\end{equation}
Note that because $d\theta = (\nabla \theta )^{skew}$, we also know that 
$ \big( (\nabla \theta)^{skew} \big) '' = 0 \; .$
Thus, we get the following (local) characterization of Hermitian surfaces satisfying Gray's first condition.
\begin{prop} \label{H1-loc-theta}
Let $(M^4, g, J, \omega)$ a Hermitian surface with Lee form $\theta$. Then the manifold satisfies Gray's first condition if and only if
\begin{equation} \label{H1-loc-2cond}
   2\delta \theta = - |\theta|^2 \; ,  \mbox{ and } \; (\nabla \theta)'' = - \frac{1}{4} \big( \theta \otimes \theta - J\theta \otimes J \theta \big) \; . 
\end{equation}  
\end{prop}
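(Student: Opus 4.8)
The plan is to start from the invariant characterization (\ref{invAH1-v2}) of Proposition \ref{ah1-locchar}(v) and specialize it to the Hermitian case. Since $N \equiv 0$, the terms $N_{JX}(\theta)$, $N_{JY}(\theta)$, $JN(X,Y)$, and $d^{\nabla}_{X,Y}(JN)$ all vanish, and the relation collapses to the statement that a certain $\Lambda^2M$-valued expression built from $\theta$ and $\nabla\theta$ is zero. First I would rewrite that expression, collecting the $\theta(X)$-type terms, the $|\theta|^2$-term, and the $\nabla\theta$-terms into the compact form $0 = Y\wedge JP(X) + JY\wedge P(X) - X\wedge JP(Y) - JX\wedge P(Y)$ with $P(X) = \tfrac12\theta(X)\theta + \nabla_X\theta - \tfrac14|\theta|^2 X$, as displayed in (\ref{H1}); this is just bookkeeping. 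The key reformulation step is to observe that the wedge-product identity (\ref{H1}) is equivalent to the scalar condition $\langle P(X),\phi Y\rangle - \langle P(Y),\phi X\rangle = 0$ for every gauge $\phi$, which in turn, after subtracting off the symmetric part of $P$, is (\ref{H1phi}).

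Next I would extract the three consequences that together are equivalent to (\ref{H1}). Tracing (\ref{H1phi}) over an orthonormal basis with $X = e_i$, $Y = \phi(e_i)$ gives $2\delta\theta = -|\theta|^2$, which is the Hermitian case of (\ref{kappa-s}). Replacing $(X,Y)$ by $(\phi X,\phi Y)$ in (\ref{H1}) and subtracting yields $\langle d\theta, J\phi\rangle = 0$ for all $\phi$; combined with the always-true $\langle d\theta,\omega\rangle = 0$ this shows $d\theta \in \Lambda^-M$, i.e. $(d\theta)'' = 0$, equivalently $((\nabla\theta)^{\mathrm{skew}})'' = 0$. Conversely, any anti-self-dual 2-form contributes nothing to the antisymmetrized scalar expression since self-dual and anti-self-dual forms commute as endomorphisms, so these first two conditions absorb exactly the skew-symmetric and non-$\phi$-invariant information, leaving the symmetric, $\phi$-invariant residue (\ref{H1phisym}).

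Then I would analyze (\ref{H1phisym}): replacing $X$ by $\phi X$ and dropping arguments gives an expression for the $\phi$-invariant part of $(\nabla\theta)_0^{\mathrm{sym}}$ in terms of $\theta$, $\phi(\theta)$, and $|\theta|^2$; doing the same with gauge $J\phi$ gives the $J\phi$-invariant part; subtracting eliminates the gauge-dependent pieces and isolates the $J$-anti-invariant part, yielding $((\nabla\theta)^{\mathrm{sym}})'' = -\tfrac14(\theta\otimes\theta - J\theta\otimes J\theta)$ as in (\ref{H1J-anti}). Since we already know $((\nabla\theta)^{\mathrm{skew}})'' = 0$, adding gives $(\nabla\theta)'' = -\tfrac14(\theta\otimes\theta - J\theta\otimes J\theta)$. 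This establishes that $(G_1)$ implies the two conditions in (\ref{H1-loc-2cond}). For the converse, I would run the argument backwards: given the two conditions in (\ref{H1-loc-2cond}), reconstruct (\ref{H1phisym}) for every gauge (the $\phi$-invariant symmetric part of $\nabla\theta$ is determined by its $J$-invariant part, which is forced once we know $\delta\theta$ and the $J$-anti-invariant part — here one must check the $J$-invariant trace-free part of $(\nabla\theta)^{\mathrm{sym}}$ is also pinned down, or show it drops out of (\ref{H1})), then reassemble (\ref{H1}), and finally invoke Proposition \ref{ah1-locchar}(v) with $N = 0$ to conclude $(G_1)$.

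The main obstacle I anticipate is the converse direction, specifically verifying that the $J$-invariant trace-free part of the symmetric covariant derivative of $\theta$ is not independently constrained — i.e. that (\ref{H1phisym}), and hence (\ref{H1}), really does follow from just the two conditions in (\ref{H1-loc-2cond}) without extra input. One expects this to work because (\ref{H1phisym}) is a $\phi$-invariant equation and varying $\phi$ over the full $S^1$-family of gauges should, after the subtraction trick, only ever see the $J$-anti-invariant part of $(\nabla\theta)^{\mathrm{sym}}$ plus the trace; but making this precise requires carefully tracking which components of the $U(2)$-decomposition each manipulation touches. If, instead, the forward argument already shows (\ref{H1}) $\Leftrightarrow$ the list \{(\ref{H11and2}), (\ref{H1phisym})\} $\Leftrightarrow$ the two displayed conditions directly (reading each biconditional off the derivation), then the converse is immediate and the proof is essentially a repackaging of the chain of equivalences established in the paragraphs preceding the proposition.
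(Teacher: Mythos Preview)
Your approach is the paper's approach, and the forward direction is exactly as written there. The only place where you leave a genuine loose end is the one you flag yourself: in the converse, whether the $J$-invariant trace-free part of $(\nabla\theta)^{\mathrm{sym}}$ drops out of (\ref{H1phisym}).

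It does, but not by ``varying $\phi$ over the $S^1$-family'' --- it is a pointwise algebraic fact specific to dimension $4$: any trace-free, $J$-invariant, symmetric $2$-tensor $A$ is automatically $\phi$-anti-invariant for \emph{every} gauge $\phi$. Indeed, if $X$ is a unit eigenvector of $A$ with eigenvalue $a$, then $\{X,JX,\phi X,J\phi X\}$ is an orthonormal eigenbasis and
\[
A = a\big(X\otimes X + JX\otimes JX\big) - a\big(\phi X\otimes\phi X + J\phi X\otimes J\phi X\big),
\]
which is visibly $\phi$-anti-invariant. Now the two hypotheses of (\ref{H1-loc-2cond}) force
\[
(\nabla\theta)_0^{\mathrm{sym}} = \tfrac{1}{8}|\theta|^2 g - \tfrac{1}{2}\,\theta\otimes\theta + A
\]
for some such $A$; taking the $\phi$-invariant part kills $A$ and recovers (\ref{H1phisym2}), hence (\ref{H1phisym}), for every gauge. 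Together with $2\delta\theta=-|\theta|^2$ and $(d\theta)''=0$ (the latter read off from the symmetry of the right side of the second hypothesis), this reassembles (\ref{H1}), and Proposition~\ref{ah1-locchar}(v) with $N=0$ gives $(G_1)$. So the missing ingredient is not a $U(2)$-decomposition bookkeeping argument but this one-line eigenvector observation; with it inserted, your proof is complete and coincides with the paper's.
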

\begin{proof} In the lines preceding the statement we indicated the proof for the forward implication. We now give a few more details on showing the reverse statement. Assume that the relations in (\ref{H1-loc-2cond}) are satisfied. Because $\nabla \theta = (\nabla \theta)^{sym} + (\nabla \theta)^{skew} $ and the right side of the second relation in (\ref{H1-loc-2cond}) is a symmetric tensor, we conclude 
$$ (d\theta)'' = \big( (\nabla \theta)^{skew} \big)'' = 0 \; ,$$
hence $d \theta \in \Lambda^{-}M$ (as $\langle d\theta \, , \, \omega \rangle = 0$ is automatic). It is clear that the second relation of (\ref{H1-loc-2cond}) implies that the relation (\ref{H1J-anti}) holds. Combined with the first part of (\ref{H1-loc-2cond}), relation (\ref{H1J-anti}) further implies that the trace-free symmetric part of $\nabla \theta$ is given by 
\begin{equation} \label{A-def}
       (\nabla \theta)_0^{sym} = \frac{1}{8} |\theta|^2 g - \frac{1}{2} \theta \otimes \theta + A \; , 
\end{equation}
where $A$ is some $J$-invariant trace-free symmetric tensor. Note that $A$ must automatically be $\phi$-anti-invariant for any gauge $\phi$. Indeed, if $X$ is an eigenvector of $A$ corresponding to an eigenvalue $a$, and $\phi$ is an arbitrary gauge, then
$$ A = a \big( X \otimes X + JX \otimes JX \big) - a \big( \phi(X) \otimes \phi(X) + J\phi(X) \otimes J\phi(X) \big) \; .$$
Taking the $\phi$-invariant part of (\ref{A-def}), we see that condition (\ref{H1phisym2}) is satisfied. Thus, we showed that all the relations in (\ref{H11and2}) and (\ref{H1phisym}) hold for an arbitrary gauge $\phi$, hence Gray's first condition is satisfied.
\end{proof}


\vspace{0.2cm}

\noindent
The condition is even more specific in the case of a Lie algebra.

\begin{prop} \label{H1Liealg}
    Let $\mathfrak{g}$ be a 4-dimensional Lie algebra equipped with a Hermitian structure $(g,J,\omega)$ ($J$ integrable). Then Gray's first condition holds if and only if  
    \begin{equation} \label{H1-loc-LA}
   \nabla \theta = \frac{1}{2} J\theta \otimes J\theta +(x\theta +yJ\theta) \otimes V + J(x\theta +yJ\theta) \otimes JV\; , 
\end{equation} 
where $x, y$ are constants and $V$ is a unit vector orthogonal to Span$(\theta, J\theta)$.
In particular,
\begin{equation} \label{dtheta-H1-LA}
   d \theta = (x\theta +yJ\theta) \wedge V + J(x\theta +yJ\theta) \wedge JV \; , 
\end{equation} 
\begin{equation} \label{dJtheta-H1-LA}
    d (J\theta) = \frac{1}{2} |\theta|^2(T \wedge JT - 2 V \wedge JV) +(x\theta +yJ\theta) \wedge JV - J(x\theta +yJ\theta) \wedge V \; .
\end{equation}  
\end{prop}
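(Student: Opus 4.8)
\noindent The plan is to derive the normal form (\ref{H1-loc-LA}) from the local characterization of Proposition \ref{H1-loc-theta} together with the homogeneity forced by the Lie algebra hypothesis, and then to obtain (\ref{dtheta-H1-LA}) and (\ref{dJtheta-H1-LA}) by differentiation. The ``if'' direction is a verification: substituting (\ref{H1-loc-LA}) into the two conditions of Proposition \ref{H1-loc-theta}, a one-line trace computation (using $V\perp\mathrm{Span}(\theta,J\theta)$ and that $J$ is an isometry, so $|J\theta|=|\theta|$) gives $\mathrm{tr}(\nabla\theta)=\tfrac12|\theta|^2$, i.e. $2\delta\theta=-|\theta|^2$, while the $V$- and $JV$-terms of (\ref{H1-loc-LA}) drop out under the $''$-projection, leaving $(\nabla\theta)''=(\tfrac12 J\theta\otimes J\theta)''=-\tfrac14(\theta\otimes\theta-J\theta\otimes J\theta)$; Proposition \ref{H1-loc-theta} then gives $(G_1)$.

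\noindent For the ``only if'' direction, assume $(G_1)$. If $\theta\equiv 0$ the structure is K\"ahler and (\ref{H1-loc-LA}) holds with $x=y=0$, so assume $\theta\neq 0$. Since the Hermitian structure is left-invariant, $|\theta|$ is a nonzero constant, $T=\theta^\sharp/|\theta|$ is a left-invariant unit vector, and one may fix a left-invariant $J$-adapted orthonormal frame $\{T,JT,V,JV\}$ with $V$ in the left-invariant distribution $\mathrm{Span}(\theta,J\theta)^\perp$. By Proposition \ref{H1-loc-theta}, the relation $2\delta\theta=-|\theta|^2$, and the computation leading to (\ref{A-def}) in its proof, we may write $\nabla\theta=\tfrac14|\theta|^2\,g-\tfrac12\,\theta\otimes\theta+A+\tfrac12\,d\theta$, with $A$ a $J$-invariant trace-free symmetric tensor and $d\theta\in\Lambda^- M$. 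The extra ingredient from the Lie algebra is that $|\theta|^2$ is constant, whence $(\nabla_X\theta)(\theta^\sharp)=\tfrac12 X(|\theta|^2)=0$ for all $X$; that is, contracting $\nabla\theta$ with $T$ in the second slot vanishes. Imposing this on the four frame vectors and using the $J$-invariance of $A$ and the anti-self-duality of $d\theta$, I expect to obtain $A(T,T)=\tfrac14|\theta|^2$ (hence, by $J$-invariance and tracelessness, $A(JT,JT)=\tfrac14|\theta|^2$, $A(V,V)=A(JV,JV)=-\tfrac14|\theta|^2$, and $A(T,JT)=A(V,JV)=0$), then $d\theta(T,JT)=0$, and finally $A(T,V)=\tfrac12 d\theta(T,V)$, $A(T,JV)=\tfrac12 d\theta(T,JV)$. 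Reassembling $\nabla\theta$ from these relations, the $g$- and $\theta\otimes\theta$-contributions cancel off the $JT$-direction, one finds $\nabla_V\theta=\nabla_{JV}\theta=0$, and $\nabla_T\theta,\nabla_{JT}\theta$ take precisely the form of (\ref{H1-loc-LA}) with $x,y$ proportional to $A(T,V),A(T,JV)$; as $\nabla\theta$ and the frame $\{T,JT,V,JV\}$ are left-invariant, $x$ and $y$ are constants.

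\noindent With (\ref{H1-loc-LA}) established, (\ref{dtheta-H1-LA}) is simply the alternation $d\theta(X,Y)=(\nabla_X\theta)(Y)-(\nabla_Y\theta)(X)$, and (\ref{dJtheta-H1-LA}) follows from the general identity (\ref{dJtheta}): since $J$ is integrable, $N\equiv 0$, so $dJ\theta=\theta\wedge J\theta-|\theta|^2\,\omega-\alpha$, where $\alpha(A,B)=(\nabla_A\theta)(JB)-(\nabla_B\theta)(JA)$, and computing $\alpha$ from (\ref{H1-loc-LA}) on the frame $\{T,JT,V,JV\}$ produces the stated expression. The main obstacle I anticipate is the bookkeeping in the ``only if'' direction: keeping track of how the $J$-invariance of $A$, the anti-self-duality of $d\theta$, and the four scalar equations $(\nabla_{e_i}\theta)(T)=0$ combine to collapse the a priori six free parameters of the pair $(A,d\theta)$ down to the two constants $x,y$. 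Everything else is either a citation of Proposition \ref{H1-loc-theta} or a routine calculation.
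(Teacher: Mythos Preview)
Your proposal is correct; the linear-algebra bookkeeping you flag as the main obstacle does indeed collapse cleanly (the four scalar equations $(\nabla_X\theta)(T)=0$ pin down $A(T,T)=\tfrac14|\theta|^2$ and $d\theta(T,JT)=0$, and force $A(T,V)=\tfrac12 d\theta(T,V)$, $A(T,JV)=\tfrac12 d\theta(T,JV)$, after which $\nabla_V\theta=\nabla_{JV}\theta=0$ falls out).

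Your route differs from the paper's in two places. For the ``only if'' direction the paper does not pass through the decomposition (\ref{A-def}); instead it parametrizes $\nabla T$ directly by three $1$-forms $m,p,q$ via $\nabla T=m\otimes JT+p\otimes V+q\otimes JV$, computes $(\nabla T)''$ from this ansatz, and matches it against the second relation in (\ref{H1-loc-2cond}) to obtain $m=\tfrac12|\theta|\,JT$ and $q=Jp$; the trace condition then gives $p(V)=0$, and $\langle d\theta,\omega\rangle=0$ gives $p(JV)=0$, so $p=xT+yJT$. Your approach exploits more of the structure already set up in the proof of Proposition~\ref{H1-loc-theta} (namely the tensor $A$), whereas the paper's is a slightly more bare-hands frame computation; both involve the same amount of work. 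For (\ref{dJtheta-H1-LA}) the paper simply computes $\nabla(J\theta)$ from (\ref{H1-loc-LA}) and skew-symmetrizes, while you invoke the general identity (\ref{dJtheta}) with $N=0$; your route is arguably cleaner since it avoids recomputing $\nabla(J\theta)$ from scratch.
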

\begin{proof} If relation (\ref{H1-loc-LA}) is satisfied, it is easy to check that both relations in (\ref{H1-loc-2cond}) hold, hence the structure satisfies Gray's first condition by Proposition \ref{H1-loc-theta}. Assume next that the Lie algebra satisfies Gray's first condition, and moreover assume $\theta \neq 0$, as otherwise all relations claimed in the statement obviously hold. Let $T = \theta^{\sharp}/|\theta|$ as before, and let $V$ be any unit vector orthogonal to 
Span$(T, JT)$. Let $m, p, q$ be 1-forms so that
$$ \nabla T = m \otimes JT + p \otimes V + q \otimes JV \; .$$
Starting from this, a short computation implies that
$$ (\nabla T)'' = \frac{1}{2} \Big[ m \otimes JT + Jm \otimes T + (p +Jq) \otimes V -J(p+Jq) \otimes JV\Big] \; .$$
Comparing the above with the relation for $(\nabla T)''$ that follows from Proposition \ref{H1-loc-theta}
$$ (\nabla T)'' = -\frac{1}{4} |\theta| \big( T \otimes T - JT \otimes JT \big) \; , $$
we obtain
$$ m = \frac{1}{2}|\theta| JT \; , \; \; q =Jp \; .$$
We next replace these in the initial expression for $\nabla T$ and multiply by $|\theta|$ to get 
$$\nabla \theta = \frac{1}{2} J\theta \otimes J\theta + |\theta| (p \otimes V + Jp \otimes JV) \; .$$
From this, it follows that
$$\delta \theta = -\frac{1}{2} |\theta|^2 - 2 |\theta| p(V) \; ,$$
thus, by first relation in (\ref{H1-loc-2cond}), it follows that $p(V) = 0$.
Next, skew-symmetrize the above expression for $\nabla \theta$ to obtain
$$ d \theta = |\theta| ( p \wedge V + Jp \wedge JV) \; .$$
It is clear that the condition $(d \theta)'' = 0$ is satisfied, but the condition $\langle d\theta , \omega \rangle= 0$ yields $p(JV) = 0$.
Therefore, there are constants $x, y$ so that $ p =x T + yJT$, so formulas 
(\ref{H1-loc-LA}) and (\ref{dtheta-H1-LA}) have been proved. Formula (\ref{dJtheta-H1-LA}) also follows after a short computation first for $\nabla J\theta$ and then skew-symmetrizing.
\end{proof}

\noindent
With this proposition in hand, we next show that for Hermitian Lie algebras the equality $\mathcal{H}_1 = \mathcal{K} $ must hold.

\begin{prop} \label{H1=K-LA}
Let $\mathfrak{g}$ be a 4-dimensional Lie algebra equipped with a Hermitian structure $(g,J,\omega)$ which satisfies Gray's first condition. Then $(g,J,\omega)$ must be a K\"ahler structure.     
\end{prop}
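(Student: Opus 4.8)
The plan is to use the structural description of $\nabla\theta$ from Proposition \ref{H1Liealg} together with the Lie-algebra structure equations to force $\theta=0$. First I would assume $\theta\neq 0$ and work with the orthonormal frame $\{T, JT, V, JV\}$, where $T=\theta^\sharp/|\theta|$ and $V$ is a unit vector orthogonal to $\mathrm{Span}(\theta,J\theta)$ as in Proposition \ref{H1Liealg}; note that on a Lie algebra all the ``functions'' appearing ($|\theta|$, and the constants $x,y$) are genuinely constant, so $|\theta|$ is a fixed positive number. From (\ref{H1-loc-LA}) I can read off all the covariant derivatives $\nabla_A B$ for $A,B$ in this frame, hence all brackets $[A,B]=\nabla_A B - \nabla_B A$, and in particular I get explicit formulas for $d\theta$, $d(J\theta)$, $dV$, $d(JV)$ — formulas (\ref{dtheta-H1-LA}) and (\ref{dJtheta-H1-LA}) already give the first two.

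The key step is to impose $d^2=0$ (equivalently the Jacobi identity) on these expressions. Applying $d$ to (\ref{dtheta-H1-LA}) and using $d\theta$, $d(J\theta)$, $dV$, $d(JV)$ should produce polynomial relations among $|\theta|^2$, $x$, $y$; the idea is that the term $\frac{1}{2}|\theta|^2(T\wedge JT - 2V\wedge JV)$ in $d(J\theta)$, when fed back through $d$, cannot be cancelled unless $|\theta|=0$. Concretely, I would compute $0 = d(d\theta)$ and extract the coefficient of the ``volume-type'' 3-forms (those involving three of the four basis covectors); the Hermitian/integrability constraint — $J$ integrable means $N=0$, which on a Lie algebra with the given $\nabla$ is automatic from Proposition \ref{H1-loc-theta}'s derivation, but it still pins down how $JV$ and $V$ interact — together with $d^2(J\theta)=0$ should give $|\theta|^4 = 0$ or an equivalent contradiction. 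Alternatively, and perhaps more cleanly, I would compute the scalar curvature $s$ (or use that $\rho = -dc$ with the relevant $1$-forms) and show the $\mathcal{AH}_1$ conditions force $s=0$ on a non-empty set while the remaining structure equations force $s\neq 0$, mirroring the argument in the proposition just before Subsection 5.1.

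The main obstacle I anticipate is the bookkeeping: one must carry along the two undetermined constants $x,y$ and the constant $|\theta|^2$ through several $d$-computations and correctly identify which wedge components must vanish, without accidentally using a relation that only holds on the full manifold rather than on a Lie algebra. In particular I expect the crux to be showing that the $T\wedge JT$ component of $d(J\theta)$ (which has coefficient $\tfrac12|\theta|^2\neq 0$) is incompatible with $d(d(J\theta))=0$ given that $dT$ and $d(JT)$ are themselves constrained by (\ref{H1-loc-LA}); once that incompatibility is exhibited, $\theta=0$ follows, and then $\nabla\theta=0$ and $d\omega=\theta\wedge\omega=0$ give that the structure is K\"ahler. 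If the direct $d^2=0$ route proves too messy, the fallback is to invoke compactness-free consequences already extracted in this section (e.g.\ the collinearity of $\rho$ and $\Phi$ and the formulas in Proposition \ref{ah1-facts}(e)) specialized to the Lie-algebra setting, where left-invariance makes $\delta\theta$, $|N|^2$, $|\theta|^2$ all constant, and derive the contradiction from $2\delta\theta=-|\theta|^2$ combined with $\int$-free integration-by-parts substitutes available on unimodular versus non-unimodular Lie algebras.
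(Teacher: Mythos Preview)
There is a genuine gap in your main line. From Proposition \ref{H1Liealg} you know $\nabla\theta$ (equivalently $\nabla T$) completely, and via $\nabla_X(JT)=(\nabla_XJ)T+J\nabla_XT$ you also get $\nabla(JT)$; but you \emph{cannot} read off all of $\nabla_A B$ for the frame. The component $\langle\nabla_{\cdot}V,JV\rangle$ --- essentially the $1$-form $c$ of (\ref{abc}) --- is independent data not determined by $\theta$. Consequently you do not have explicit formulas for $dV$ and $d(JV)$, only for $d(V\wedge JV)$ at best, and the $d^2=0$ system you propose becomes underdetermined: the Jacobi constraints will involve the unknown $c$ and will not by themselves force $|\theta|=0$. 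Your fallback via $2\delta\theta=-|\theta|^2$ is exactly right for the unimodular case (where $\delta\theta=0$ automatically), but it leaves the non-unimodular case untouched.

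The paper's proof proceeds differently. After disposing of the unimodular case as you suggest, it splits the non-unimodular case according to whether $(x,y)=(0,0)$. If $x=y=0$, it uses $d\Phi=0$ from Proposition \ref{ah1-facts}(e) (which in the Hermitian setting gives $d(V\wedge JV)=0$, since $\Phi=\tfrac14|\theta|^2\,V\wedge JV$); then applying $d$ to (\ref{dJtheta-H1-LA}) yields $d(T\wedge JT)=0$, hence $d\omega=0$. The hard case $(x,y)\neq(0,0)$ is \emph{not} handled by direct computation at all: instead the paper observes that $\bar\omega_1=d\theta$ and $\bar\omega_2=d(J\theta)+\tfrac12|\theta|^2 V\wedge JV$ are two closed, mutually orthogonal anti-self-dual forms, so after rescaling they define a complex symplectic structure for the opposite orientation with one form exact. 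It then invokes Ovando's classification \cite{Ov} to identify the only candidate Lie algebra as $\mathfrak{aff}(\mathbb C)$, and finishes with a direct check that the relevant complex structure there fails $(G_1)$. So the missing idea in your plan is twofold: the input $d\Phi=0$ (not $d^2=0$) for the $d\theta=0$ subcase, and an external classification result for the remaining subcase.
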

\begin{proof} If the Lie algebra is unimodular, then the Lee form $\theta$ must be co-closed, i.e. $\delta \theta = 0$, so the conclusion follows immediately from the first relation of (\ref{H1-loc-2cond}). The main effort, thus, will be for the non-unimodular case. From the previous proposition, we have that relations (\ref{H1-loc-LA}), (\ref{dtheta-H1-LA}) and 
(\ref{dJtheta-H1-LA}) hold for some constants $x, y$. 

Consider first the case $x=y=0$, i.e. the case $d\theta = 0$. As we know that $V \wedge JV$ is a closed form, the formula (\ref{dJtheta-H1-LA}) with $x=y=0$ implies that $T \wedge JT$ is also closed. Thus, we get
$$ d\omega = d(T \wedge JT + V \wedge JV) = 0, $$
hence $(g, J, \omega)$ is a K\"ahler structure. 

Therefore, from now on, assume that at least one of the constants $x, y$ is not zero. Then formulas (\ref{dtheta-H1-LA}) and (\ref{dJtheta-H1-LA}) yield two symplectic forms on the Lie algebra, compatible with the opposite orientation. Indeed, let
$$ \bar{\omega}_1 = d\theta \; , \; \; \bar{\omega}_2 = d(J\theta) + \frac{1}{2} |\theta|^2 V \wedge JV \; .$$
Observe that $\bar{\omega}_1, \bar{\omega}_2 \in \Lambda^-$. Also note that $\bar{\omega}_1 \wedge \bar{\omega}_2 = 0 $. Thus, after a rescaling so that 
$$ \bar{\omega}_1^2 = \bar{\omega}_2^2 = - \omega^2 \; , $$
it follows that the Lie algebra with the given metric also admits, with the opposite orientation, a compatible complex symplectic structure. In this structure, at least one of the forms, $\bar{\omega}_1$, to be precise, is exact. An examination of the table of Corollary 4.2 in \cite{Ov} shows that only the Lie algebra $\mathfrak{r}_2' = \mathfrak{aff}(\mathbb{C})$ admits such complex symplectic structures. This Lie algebra also admits complex structures compatible with either orientation which are nicely described in the table associated with Proposition 3.2 of the same paper \cite{Ov} of Ovando. Of these complex structures, it turns out that only the one corresponding to the parameter $b_1 = -i$ could be compatible with the complex symplectic structure with the opposite orientation. Namely, we want that $\leftr \La^{0,2}M \rightr$ be orthogonal to both forms of the complex symplectic structure. This happens only for $b_1 = -i$. A direct check will show that this complex structure will not yield a structure satisfying $(G_1)$.
\end{proof}

\section{A unique non-K\"ahler 4-dimensional $\mathcal{AH}_1$ Lie algebra}

At the start of this section, we describe the Lie algebra $\mathcal{A}_{3,6}\oplus \mathcal{A}_1$ (here we use the same notation of Lie algebras as~\cite{Pat}). The structure of the Lie algebra is 
$$[e_1,e_3]=-e_2,\quad [e_2,e_3]=e_1,$$
where $\{e_1,e_2,e_3,e_4\}$ is a basis of the Lie algebra.
The associated simply connected group to the Lie algebra $\mathcal{A}_{3,6}\oplus \mathcal{A}_1$ admits lattices~\cite{B}. If 
$\{e^1,e^2,e^3,e^4\}$ is the dual basis, the structure equations can be rewritten as
$$ d e^1 = - e^2 \wedge e^3 \; , \; \; d e^2 =  e^1 \wedge e^3\; , \; \; d e^3 = 0 \; , \; \; d e^4 = 0 \; .$$
It can be checked that the Riemannian metric 
$$g=\sum_{i=1}^4e^i\otimes e^i \; ,$$ 
is a flat metric. By results of Milnor, \cite{Mil}, it follows that $\mathcal{A}_{3,6}\oplus \mathcal{A}_1$ is, in fact, the only 4-dimensional non-abelian Lie algebra with a flat metric. It turns out that with this flat metric and any of the orientations, the Lie algebra admits a compatible K\"ahler structure. Indeed, it is easy to check that both $J_0$ and $\bar{J}_0$, defined by
$$ J_0 e_1 = e_2 \; , \; J_0 e_3 = -e_4 \; , \; \; \; \bar{J}_0 e_1 = e_2 \; , \; \bar{J}_0 e_3 = e_4 \; $$
are integrable almost complex structures and the induced 2-forms
$$\omega_0 = e^1 \wedge e^2 - e^3 \wedge e^4 \; , \; \; \bar{\omega}_0 = e^1 \wedge e^2 + e^3 \wedge e^4 $$
are both closed. However, with either orientation, any other left-invariant $g$-compatible almost complex structure $J$ on the Lie algebra $\mathcal{A}_{3,6}\oplus \mathcal{A}_1$ is non-K\"ahler and $(g,J)$ will be a basic example
of an $\mathcal{AH}_1$-structure, as described in Remark \ref{basic-exp}. For example, we can consider the almost-Hermitian structure $(g,J)$, where $J$ is defined by
$$Je_1=e_3,\quad Je_2=e_4.$$
As observed in \cite{BL}, $(g,J)$ induces a locally conformally symplectic structure on $\mathcal{A}_{3,6}\oplus \mathcal{A}_1$, i.e. $d\theta = 0$. 


\vspace{0.2cm}

\noindent
The goal of this section is to prove the following result:
\begin{theorem} \label{uniqueLA}
The only 4-dimensional Lie algebra admitting a non-K\"ahler left-invariant $\mathcal{AH}_1$-structure is the Lie algebra $\mathcal{A}_{3,6}\oplus \mathcal{A}_1$ endowed with a flat Riemannian metric and a (non-K\"ahler) almost complex structure compatible with this metric.
\end{theorem}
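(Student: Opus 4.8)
The plan is to leverage the two main structural results already at our disposal: Proposition~\ref{H1=K-LA}, which rules out non-K\"ahler $\mathcal{AH}_1$-structures entirely in the Hermitian case, and Proposition~\ref{ah1-facts}, which gives strong pointwise constraints on the Ricci tensor, the Lee form, and the Nijenhuis tensor under the $(G_1)$-assumption. On a Lie algebra these pointwise constraints become algebraic identities among the structure constants, so the classification should reduce to a finite case-check against a known list of $4$-dimensional Lie algebras (e.g.\ the Patera--Sharp tables in \cite{Pat}, or Ovando \cite{Ov}). First I would observe that if $J$ is integrable, Proposition~\ref{H1=K-LA} forces the structure to be K\"ahler, so we may assume $J$ is non-integrable, i.e.\ $N\not\equiv 0$; since everything is left-invariant, $N$ is then nowhere zero and the open set $\mathcal{U}$ of Section~5 is all of $\g$. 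Left-invariance also makes the scalar curvature $s$ constant.

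Next I would split on whether $s=0$ or $s\neq 0$. If $s\neq 0$ everywhere, Proposition~\ref{ah1-Gaud} gives $\delta\theta=0$; but on a non-unimodular Lie algebra the Lee form of any left-invariant Hermitian-type structure has a nonzero divergence component coming from the trace of the adjoint representation, and in the unimodular case $\delta\theta=0$ combined with relation (\ref{kappa-s}) (which under $(G_1)$ reads $0 = \tfrac14|N|^2 - |\theta|^2 - 2\delta\theta$) would give $\tfrac14|N|^2 = |\theta|^2$, feeding into the frame analysis of Section~5 (equations (\ref{tatbah1})--(\ref{dthetaJpsi})); I would push that analysis on the Lie algebra, using that all the $1$-forms $\tilde a,\tilde b,\tilde c$ and the functions $|\theta|,\lambda,\mu$ are now constant or satisfy constant-coefficient linear relations, to derive a contradiction with $s\neq 0$ via (\ref{dthetaJpsi}) (on a Lie algebra $d\theta$ is an explicit constant form, pinning down $\mu$). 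So the surviving case is $s=0$: then $\kappa=0$ by Proposition~\ref{ah1-locchar}(ii), hence $W^+=0$, and by part (d) of Proposition~\ref{ah1-facts} either $\mathrm{Ric}=0$ or ($\theta=0$ and $N=0$); the latter is K\"ahler, so we land on $\mathrm{Ric}=0$ with $W^+=0$, i.e.\ $g$ is Ricci-flat anti-self-dual. A left-invariant Ricci-flat metric on a Lie group is flat (the scalar curvature being zero forces flatness in the homogeneous four-dimensional setting, or one invokes Milnor--Alekseevskii-type results), and by Milnor \cite{Mil} the only non-abelian $4$-dimensional Lie algebra admitting a flat left-invariant metric is $\mathcal{A}_{3,6}\oplus\mathcal{A}_1$; the abelian case $\R^4$ gives the torus, which is not a non-K\"ahler example in the strict Lie-algebra sense (every compatible $J$ on the abelian Lie algebra is parallel hence K\"ahler). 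This isolates $\mathcal{A}_{3,6}\oplus\mathcal{A}_1$, and the basic-example discussion at the start of Section~6 shows it genuinely carries non-K\"ahler $\mathcal{AH}_1$-structures, completing both directions.

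The main obstacle I anticipate is the $s\neq 0$ branch: one must genuinely rule out a non-unimodular Hermitian-type Lie algebra sneaking through, and the cleanest route is probably to carefully exploit that $\delta\theta$ is forced to vanish (Proposition~\ref{ah1-Gaud}) while simultaneously, via the constancy of all invariants, the relation (\ref{dthetaJpsi}) together with $d\tilde b=0$ and $\tilde c=\mu\tilde b$ forces $\rho = -d\mu\wedge\tilde b = 0$ (since $\mu$ is constant on a Lie algebra), hence $s=0$ after all---a contradiction. If instead one prefers a brute-force approach, the fallback is to write out the $(G_1)$-conditions of Proposition~\ref{ah1-locchar}(iv) and Proposition~\ref{ah1-facts}(a),(c) as polynomial equations in the structure constants of a general $4$-dimensional Lie algebra in a normalized orthonormal frame adapted to the splitting $\mathrm{Span}(\theta,J\theta)\oplus\mathrm{Image}\,N$, and check each Lie algebra in the classification; but the Ricci-flat/flat reduction above should make this unnecessary and is far shorter.
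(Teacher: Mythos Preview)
Your overall architecture matches the paper's: dispatch the integrable case via Proposition~\ref{H1=K-LA}, then for $N\neq 0$ show the metric is Ricci-flat ASD, hence flat, and invoke Milnor. In the unimodular case your argument is essentially the paper's: unimodularity gives $\delta\theta=0$ for free, so $\tilde a=0$, $d\tilde b=0$, $\tilde c=\mu\tilde b$ with $\mu$ constant on a Lie algebra, whence $\rho=-d\tilde c=0$ and the structure is Ricci-flat ASD. (The paper passes from Ricci-flat ASD to flat via the De~Smedt--Salamon classification rather than an Alekseevskii-type statement, but either route works.)

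The genuine gap is the non-unimodular branch. You invoke Proposition~\ref{ah1-Gaud} to force $\delta\theta=0$, but that proposition is proved for \emph{compact} manifolds: its conclusion comes from integrating the pointwise relation $\big(d(\delta\theta/s),\theta\big)_g=0$, and on a Lie algebra $\delta\theta/s$ is constant so the pointwise relation is vacuous, while no compact quotient exists in the non-unimodular case. Your fallback claim that non-unimodularity forces $\delta\theta\neq 0$ is also not correct: non-unimodularity only says \emph{some} left-invariant form has nonzero codifferential, not that $\theta$ specifically does. Crucially, the Section~5 machinery you want to use---(\ref{tatbah1}), $d\tilde b=0$, $\tilde c=\mu\tilde b$, (\ref{dthetaJpsi})---was all derived \emph{after} imposing $\delta\theta=0$; without it $\tilde a\neq 0$ by (\ref{tatb}) and none of those simplifications hold. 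The paper closes this gap by a direct frame computation that does not assume $\delta\theta=0$: working with the general (\ref{tatb}) and the $\mathcal{AH}_1$ relations of Proposition~\ref{ah1-locchar}(iv), it derives explicit expressions for $dV$, $d(JV)$, $d(JT)$, then computes $d\omega$ two ways to extract $\langle d(J\theta),\omega\rangle=-|\theta|^2$; comparing with the general identity $\langle d(J\theta),\omega\rangle=-|\theta|^2-\delta\theta$ forces $\delta\theta=0$, reducing to the unimodular argument and a contradiction (the resulting flat Lie algebra is unimodular). This computation is the substantive content of the non-unimodular case and is absent from your proposal.
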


\begin{proof} The case of a Hermitian Lie algebra has been solved by Proposition \ref{H1=K-LA}, so we assume $N\neq 0$ from now on.

We first prove the result in the case when the Lie algebra is unimodular. Let $\mathfrak{g}$ be a 4-dimensional unimodular Lie algebra admitting a non-K\"ahler $\mathcal{AH}_1$-structure $(g, J, \omega)$. Because of the unimodularity assumption, the Lee form $\theta$ must be co-closed, i.e. $\delta \theta = 0$. From $\kappa -s = 0$ and the relation (\ref{kappa-s}), we have 
 $|\theta|^2 = \frac{1}{4} |N|^2 \neq 0$. Therefore, the frame $\{T, JT, V, JV\}$ for $TM$ from the previous section is well-defined on $\mathfrak{g}$ and so is the ``Nijenhuis alligned'' frame $\{ \psi, J\psi \}$ from  (\ref{psiJpsi}). If $\tilde{a}, \tilde{b}, \tilde{c} $ are the 1-forms corresponding to the relations (\ref{abc}) with respect to the frame $\{ \psi, J\psi \}$, as we have shown in the previous section, $\tilde{a}=0$, $d\tilde{b} = 0$, $\tilde{c} = \mu \tilde{b}$. As we are on a Lie algebra with an invariant structure, $\mu$ is a constant, so $\gamma^J = - d\tilde{c} = \mu d\tilde{b} = 0$. As the form $\Phi$ vanishes (from Proposition \ref{ah1-facts}), it follows that $\rho^* = \rho = 0$. Thus, the Lie algebra must be Ricci flat and ASD (because of the $\mathcal{AH}_1$-assumption).
 
 Next, using the classification of 4-dimensional Lie algebras that admit an ASD metric by De Smedt and Salamon, \cite{DeSmSa}, one concludes that a Ricci flat ASD metric on a Lie algebra must actually be flat. From Milnor, \cite{Mil}, it follows that the only 4-dimensional non-abelian Lie algebra with a flat metric is $\mathcal{A}_{3,6}\oplus \mathcal{A}_1$. As we observed already, this Lie algebra admits a compatible K\"ahler structure $(g, J_0)$ (with respect to either orientation), but it admits many compatible non-K\"ahler almost complex structures $(g,J)$ as well. As the metric is flat, any one of these is trivially of class $\mathcal{AH}_1$.  

 Next, we consider the case when $\mathfrak{g}$ is a 4-dimensional non-unimodular Lie algebra admitting an $\mathcal{AH}_1$-structure $(g, J, \omega)$ with $N\neq 0$. Let us assume that $\delta \theta $ is a non-zero constant, something which is apriori possible in the non-unimodular case.
 Choose the special frame $\{ T, JT, V, JV\}$ so that the Nijenhuis tensor has the form (\ref{N-specframe}) and choose the aligned gauge $\{ \psi, J \psi \}$ as in (\ref{psiJpsi}). Let $\tilde{a}, \tilde{b}, \tilde{c} $ be the 1-forms corresponding to relations (\ref{abc}) with respect to the frame $\{ \psi, J\psi \}$ and recall that $\tilde{a}$ and $\tilde{b}$ are given by (\ref{tatb}). Replacing $\tilde{a}$ and $\tilde{b}$ in the $\mathcal{AH}_1$ conditions gives
 $$ d \tilde{a} = \tilde{c} \wedge \tilde{b} \; , \; \; d \tilde{b} = -\tilde{c} \wedge \tilde{a} \; ,$$
 and these can be seen to be equivalent with
 \begin{equation} \label{dJV-dV}
 d(JV) = -\Big(\frac{|\theta| + \lambda}{|\theta| - \lambda} \Big) \, \tilde{c} \wedge V \; , \; \; d(V) = \Big(\frac{|\theta| - \lambda}{|\theta| + \lambda} \Big) \, \tilde{c} \wedge JV \; .     
 \end{equation}
 A direct computation (see also Lemma 2.2 in \cite{DS}) shows that for any 1-form $\tau$
 $$ (dJ\tau)'' = \frac{1}{2} N_{J\tau} + J(d\tau)'' \; .$$
 Applying this with $\tau = V$ and using (\ref{dJV-dV}), a short computation yields
 $$\lambda(JT \wedge V + T \wedge JV) = \frac{4|\theta| \cdot \lambda}{|\theta|^2 - \lambda^2} \big(\tilde{c} \wedge V - J \tilde{c} \wedge JV \big) \; .$$
 We have $\lambda \neq 0$ by the assumption that $N \neq 0$, thus the above relation implies
 \begin{equation} \label{tildecT} \nonumber
 \tilde{c}(T) = 0 \; , \; \; \tilde{c}(JT) = \frac{|\theta|^2 - \lambda^2}{4|\theta|} \; .
 \end{equation}
Therefore, we get the following expression for the 1-form $\tilde{c}$
\begin{equation} \label{tc} \nonumber
\tilde{c} = \frac{|\theta|^2 - \lambda^2}{4|\theta|} \, JT + \tilde{c}(V) \, V + \tilde{c}(JV) \, JV \; .    
\end{equation}
Differentiating the above relation and using (\ref{dJV-dV}), we eventually obtain
\begin{eqnarray} \label{dtc} \nonumber
d\tilde{c} &=& \frac{|\theta|^2 - \lambda^2}{4|\theta|} \, d(JT) + \Big[\tilde{c}(V)^2\Big(\frac{|\theta| - \lambda}{|\theta| + \lambda} \Big) + \tilde{c}(JV)^2\Big(\frac{|\theta| + \lambda}{|\theta| - \lambda} \Big) \Big] \, V \wedge JV + \\ \nonumber
& & + \tilde{c}(V) \frac{(|\theta| - \lambda)^2}{ 4|\theta| } \, JT \wedge JV - \tilde{c}(JV) \frac{(|\theta| + \lambda)^2}{ 4|\theta| } \, JT \wedge V \; .    
\end{eqnarray}
On the other hand, we know by relation (\ref{gamma=-dc}) and Proposition \ref{ah1-facts} that
$$ d\tilde{c} = - \gamma = k \, V \wedge JV \; $$
for some constant $k$. Combining these two expressions for $ d\tilde{c}$, we get
\begin{equation} \label{dJT}
d(JT) = - \tilde{c}(V) \Big(\frac{|\theta| - \lambda}{|\theta| + \lambda} \Big) \, JT \wedge JV + \tilde{c}(JV) \Big(\frac{|\theta| + \lambda}{|\theta| - \lambda} \Big) \, JT \wedge V + l \, V \wedge JV \; ,   
\end{equation}
for some constant $l$.
Next, we compute $d\omega$ in two different ways and extract information from this. On the one hand,
\begin{equation} \label{dom1}
   d\omega = \theta \wedge \omega = |\theta| \, T \wedge (T \wedge JT + V \wedge JV) = |\theta| \, T \wedge V \wedge JV \; . 
\end{equation}  
On the other hand,
$$ d\omega = d(T\wedge JT) + d(V \wedge JV) = dT \wedge JT - T \wedge d(JT) \; .$$
where for the second equal sign we used $d(V \wedge JV) = 0$ from Proposition \ref{ah1-facts}. Replacing $d(JT)$ from (\ref{dJT}) in the above, we get 
\begin{eqnarray} \label{dom2}
  & & d\omega = dT \wedge JT - \\ \nonumber
   &-& T \wedge \Big[ - \tilde{c}(V) \Big(\frac{|\theta| - \lambda}{|\theta| + \lambda} \Big) \, JT \wedge JV + \tilde{c}(JV) \Big(\frac{|\theta| + \lambda}{|\theta| - \lambda} \Big) \, JT \wedge V + l \, V \wedge JV \Big] \; . 
\end{eqnarray}
Comparison of the coefficients of the terms in $T \wedge V \wedge JV$ in (\ref{dom1}) and (\ref{dom2}), yields $l = - |\theta|$, which, by relation (\ref{dJT}), is equivalent to (in the Lie algebra setting)
$$ \langle d(J\theta) , \omega \rangle = - |\theta|^2 \; .$$
However, on any almost Hermitian 4-manifold (e.g. see Proposition 2.3 in \cite{DS})
$$ \langle d(J\theta) , \omega \rangle = - |\theta|^2 - \delta \theta \; .$$
Comparing the two relations above, it follows that $\delta \theta = 0$. But this is a contradiction with the assumption $\delta \theta \neq 0$ that we made at the outset of this computation. We conclude that for any non-unimodular Lie algebra of class $\mathcal{AH}_1$ with $N \neq 0$, we must have $\delta \theta = 0$. In this case, the argument from the unimodular case applies, and it implies that such a Lie algebra is flat. But by Milnor, the only 4-dimensional flat non-abelian Lie algebra is $\mathcal{A}_{3,6}\oplus \mathcal{A}_1$, which is unimodular. Thus, there is no non-unimodular Lie algebra of class $\mathcal{AH}_1$ with $N \neq 0$ and the proof is completed.

\end{proof}

\end{document}